\documentclass[12pt,a4paper]{article}

\usepackage[nottoc]{tocbibind}
\usepackage{mathtools}
\usepackage{preamble}

\newcommand{\usualHilb}{{\ccH}ilb}
\newcommand{\usualHilbsm}[2][r]{\usualHilb_{#1} ^{\operatorname{sm}}{(#2)}}
\DeclareMathOperator{\sat}{sat}
\DeclareMathOperator{\sch}{\mathsf {sch}}
\DeclareMathOperator{\spanmap}{\mathsf {span}}
\DeclareMathOperator{\idealmap}{\mathsf {ideal}}
\DeclareMathOperator{\perpmap}{\mathsf {perp}}

%opening
%%%%%%%%%%%%%%%%%%%%%% Layout %%%%%%%%%%%%%%%%%%%%%%%%%%%%%%%%%%%%%%%
%\usepackage[includeheadfoot,left=2cm,right=2cm,top=2cm,bottom=2cm,head=15pt,
%bindingoffset=0pt]{geometry}
 \usepackage{vmargin}
%% \setpapersize{a4}
 \setmargrb{1in}{0.7in}{1in}{0.7in} % --- sets all four margins.  Cool.

\title{Apolarity for border cactus decompositions}
\author{\nisiabu \and \JaBu}

\date{27th January 2026}

\newenvironment{red}{\color{red}}{}
\newcommand{\bred}{\begin{red}}
\newcommand{\ered}{\end{red}}

\newenvironment{blue}{\color{blue}}{}
\newcommand{\bblue}{\begin{blue}}
\newcommand{\eblue}{\end{blue}}

\newenvironment{green}{\color{green}}{}
\newcommand{\bgreen}{\begin{green}}
\newcommand{\egreen}{\end{green}}

\usepackage[textsize=tiny]{todonotes}
             % for JarekB's comments on the margin
     % for JarekB's comments inline
             % for Wero's comments on the margin
     % for Wero's comments inline

\newcommand{\bCD}{\underline{\mathbf{CD}}}
\newcommand{\Irrel}[1]{\operatorname {Irrel}_{#1}}

\newcommand{\apolar}[1][F]{\operatorname{Ann}\left(#1\right)}

\DeclareMathOperator{\Slip}{Slip}
\DeclareMathOperator{\Fl}{Fl}

\newcommand{\relGr}{\ccG{}r}

\begin{document}
\maketitle
\begin{abstract}
   The border apolarity technique was introduced in our earlier work for secant varieties over complex numbers.
   We extend the theory to cactus varieties of toric varieties over any algebraically closed field.
   A border cactus decomposition is a mulithomogeneous ideal in the Cox ring (also called the total coordinate ring) of the toric variety that witnesses that a given point is in a specific cactus variety.
   The definition of such witness uses apolarity and
   we describe the set of ideals that are credible witnesses for this purpose in terms of a correspondence between the usual Hilbert scheme (parametrising all closed subschemes of the toric variety) and the multigraded Hilbert scheme (parametrising all multihomogeneous ideals in the Cox ring).
   We also take this opportunity to extend the border apolarity to linear subspaces (in non-border setting, this is equivalent to simultaneous decompositions).
\end{abstract}

\medskip
{\footnotesize
\noindent\textbf{addresses:} \\
W.~Buczy\'nska, \eemail{wkrych@mimuw.edu.pl},
   Faculty of Mathematics, Computer Science and Mechanics, University of Warsaw, ul. Banacha 2, 02-097 Warsaw, Poland\\
J.~Buczy\'nski, \eemail{jabuczyn@impan.pl},
   Institute of Mathematics of the Polish Academy of Sciences, ul. \'Sniadeckich 8, 00-656 Warsaw, Poland\\
}

\noindent\textbf{keywords:}
cactus varieties, border cactus rank, finite schemes, apolarity, multigraded Hilbert scheme,  Cox ring.

\noindent\textbf{AMS Mathematical Subject Classification 2020:}
Primary: 14N07; Secondary: 14A15, 14C05, 14M25, 15A69.

\tableofcontents

\section{Introduction}

Throughout the article $\kk$ is
an algebraically closed base field of any characteristics, and $\NN$ denotes the set of non-negative integers.
Fix a projective space $\PP W \simeq \PP^N$ over $\kk$.
Given a subscheme $R\subset \PP^N$ by $\linspan{R}$ we denote the projective linear span of $R$, that is the linear space $\PP^k\subset \PP^N$
defined by all the degree $1$ polynomials in the homogeneous ideal of $R$.
For a projective variety $X \subset \PP^N$ the $r$-th \emph{secant variety} of $X$
  is the closure of the union of all linear subspaces spanned by $r$ points of $X$:
\[
  \sigma_r(X) := \bigcup \overline{\set{\linspan{\fromto{p_1}{p_r}}  \mid p_i \in X}} \subset \PP^N.
\]
It is a natural construction of a sequence of varieties starting from $X=\sigma_1(X)$ and
--- if $X$ is not contained in any hyperplane ---  stabilising at $\sigma_r(X)=\PP^N$ for $r\gg0$.
Of main interest are in particular the cases where $X\subset \PP W$ is either a Segre variety consisting of simple tensors in the tensor space $\PP (A\otimes B \otimes C \otimes\dotsb)$, Veronese variety consisting of divided powers of linear forms in the space $\PP(S^{(d)} A)$ of homogeneous polynomials of degree $d$, or the Segre-Veronese variety, which is a combination of both above.
In \cite{nisiabu_jabu_border_apolarity}, for those varieties,
and more generally for smooth toric projective varieties,
we proposed a method of studying secant varieties called border apolarity, see \cite[Thms~1.2, 3.15, 4.3]{nisiabu_jabu_border_apolarity}.
The goal of this article  is to improve and  expand the method of border apolarity.
In particular, we address the problems posed in
\cite[\S7.2, \S7.4, \S7.5]{nisiabu_jabu_border_apolarity}.

\subsection{Motivation and historical remarks about secants}

Secants of plane curves (for example, of a circle) have been present in geometry since antiquity.
Versions of secant varieties were studied as early as the first half of the 20th century.
Whitney proved that any smooth  manifold $M$ of dimension $n$ (over $\RR$) can be embedded smoothly in $\RR^{2n+1}$ using projections from a point that lies outside of the differential-geometric version of the second secant variety \cite[\S3]{persson_Whitney_embedding_thm}.
By a more complicated method, he then showed that it is possible to project once more and obtain an embedding in $\RR^{2n}$.
Within broadly understood algebraic geometry, Sylvester, Palatini, Terracini, studied the properties of secant varieties and a related notion of $X$-rank (in special cases and not under that name). Later (around 1970), in one of the periods of intensive development of the theory of computational complexity, $X$-ranks (above all, for $X$ being the Segre variety of three factors) appeared while studying fast algorithms for multiplying large matrices. At the beginning of the 21st century, geometry and algorithms ``met'' and researchers in these two fields began to exchange knowledge intensively. It turned out that some important ``open'' problems for one party were well known and understood by the other, while the geometric understanding of aspects of the algorithms helped to generalise these algorithms widely,
or to prove they cannot be strenghtened due to natural geometric obstructions.
Since 2010 some of these obstructions have a name, they are called cactus varieties, see \S\ref{sec_intro_cacti}.

Consequently, the motivation for studying secant varieties and related notions of $X$-rank, $X$-border rank, and cactus varieties comes from their rich history, from
relatively elementary definitions combined with highly non-trivial problems,
from a natural geometric formulation,
but also from numerous applications to practical problems.
You can read more about this for instance in
\cite[Chapt.~1]{landsberg_tensorbook},
\cite[Chapt.~1]{landsberg_geometry_and_complexity}.

\subsection{Rank, decompositions, and cactus varieties}\label{sec_intro_cacti}

With the same assumptions about $X$ as above the $r$-th \emph{cactus variety} of $X$ is:
\[
  \cactus{r}{X}:= \bigcup \overline{\set{\langle R \rangle  \mid R \text{ is a $0$-dimensional  subscheme of $X$ degree at most $r$}  }} \subset \PP^N.
\]
Just as secant varieties, also cactus varieties form a natural sequence of reduced schemes starting
from $X=\cactus{1}{X}$ and  stabilising at $\cactus{r'}{X}=\PP^N$ for $r'\gg0$
--- again, subject to the assumption that $X$ is not contained in any hyperplane.
By construction $\sigma_r(X)\subset \cactus{r}{X}$ and if $X$ is smooth, then for some
small values of $r$ we have $\sigma_r(X)=  \cactus{r}{X}$.
However, for $X$ of sufficiently large dimension, the cactus varieties tend to fill out
the ambient projective space $\PP^N$ much faster than
the secant varieties~\cite[Thms~3, 4]{bernardi_ranestad_cactus_rank_of_cubics}, \cite[Cor.~1.7]{galazka_mgr_publ}.
Moreover, the cactus varieties form an obstruction for detereminantal equations
to define secant varieties \cite{nisiabu_jabu_cactus}, \cite{galazka_vb_cactus}, \cite[Thm~1.18]{galazka_phd}.

The notions of secant and cactus varieties lead to various types of ranks and decompositions of points $F\in \PP^N$:

\begin{itemize}
 \item  \emph{$X$-rank} of $F$ is $r_X(F)
      = \min\set {r \mid
        F\in \linspan{\fromto{p_1}{p_r}},
          p_i \in X}$.
 \item \emph{$X$-cactus rank} of $F$ is
      \[
        cr_X(F) = \min\set {\deg R \mid
        R \subset X \text{ is a subscheme}, \dim R =0,  F\in\linspan{R}}.
      \]
 \item A \emph{(minimal) decomposition} of $F$
   is any set $\setfromto{p_1}{p_{r_X(F)}}$
   with $p_i \in X$ such that $F\in \linspan{\fromto{p_1}{p_{r_X(F)}}}$.
 \item A \emph{cactus decomposition} of $F$
   is any subscheme $R\subset X$ such that $\dim R=0$, $\deg R = cr_X(F)$ such that $F\in \linspan{R}$.
\end{itemize}

\subsection{Apolarity: towards border decompositions}
In this article we are primarily interested in border analogues of the above notions of ranks and decompositions.
To define them precisely we need to recall the concept of apolarity.
Suppose
$S=\kk[\fromto{\alpha_1}{\alpha_m}]$ be a polynomial ring.
Consider $\widetilde{S}=\kk[\fromto{x_1}{x_m}]$ the vector space of polynomials in another set of variables $x_i$.
For now we ignore the multiplicative structure of $\widetilde{S}$ and we work only with its vector space structure.
For $I= (\fromto{i_1}{i_m}) \in \NN^{m}$ we denote by $\alpha^I$ the monomial
$\alpha_1^{i_1}\alpha_2^{i_2}\dotsm\alpha_m^{i_m}$ and by $x^{(I)}:=x_1^{i_1} x_2^{i_2}\dotsm x_m^{i_m}$.
We have an action $\hook$ of $S$ on $\widetilde{S}$ defined in coordinates as
\[
 \alpha^I \hook x^{(J)} =
 \begin{cases}
    x^{(J-I)} &\text{if } J-I \in \NN^m\\
    0 &\text{otherwise,}
 \end{cases}
\]
and we  extend  it bilinearly to a map  $S\times \widetilde{S}\to \widetilde{S}$.
This action is called the \emph{apolarity action}  and we discuss it  in Section~\ref{sec_apolarity}.

For any $F\in \widetilde{S}$ we define the apolar ideal $\apolar \subset S$:
\[
 \apolar = \set{\Theta\in S \mid \Theta\hook F = 0}.
\]
Note that for any non-zero constant $c\in \kk$ we have $\apolar[c\cdot F] = \apolar$.
Moreover, if $S$ is a graded ring by some finitely generated abelian group and $\tilde{S}$ is graded in the same way (that is $\deg \alpha_i = \deg x_i$ for each $i$), then for a homogeneous $F$ the ideal $\apolar$ is homogeneous.
Similar property applies to appropriate group actions: whenever the actions on $S$ and $\tilde{S}$ are compatibile, and $F$ is invariant, then also $\apolar$ is invariant.

Suppose now $X\subset \PPof{H^0(X,L)^*}$ is a smooth toric projective variety embedded via complete linear system of a very ample line bundle $L$.
The cases of major interest include Segre, Veronese and Segre--Veronose varieties, that is $X=\PP^{a_1}\times\dotsb \times \PP^{a_k}$   and $L=\ccO_X (\fromto{d_1}{d_k})$ for some positive integers $k$, $a_i$, and $d_i$
(Segre case is when all $d_i=1$, and Veronese case is when $k=1$).
These cases are connected to applications to tensors, as explained in the introduction to \cite{nisiabu_jabu_border_apolarity}.

Let $S= S[X]$ be the Cox ring of $X$,
so that $S=\bigoplus_{D\in \Pic(X)} H^0(X,D) \simeq \kk[\fromto{\alpha_1}{\alpha_m}]$, where $\Pic(X)\simeq \ZZ^{\rho}$ is the grading group and $m= \dim X + \rho$.
Then using the duality determined by apolarity we also have
$\widetilde{S}  = \bigoplus_{D\in \Pic(X)} H^0(X,D)^*$ and for any $F\in \PPof{H^0(X,L)^*}$ the apolar ideal $\apolar\subset S$ is a homogeneous ideal with respect to the grading determined by $\Pic(X)$.
Moreover, the automorphism group of $X$ acts on $S$, $\widetilde{S}$, and $\PPof{H^0(X,L)^*}$ in a compatibile way, and if $F\in \PPof{H^0(X,L)^*}$ is invariant under some subgroup $B\subset \Aut(X)$, then also $\apolar\subset S$ is preserved under the action of $B$ on $S$.

For each subscheme $R\subset X$ we define
its homogeneous ideal $I(R)\subset S$ to be generated by sections that vanish on $R$:
\[
  I(R):=\left(
    s\in S_{D}\simeq H^0(X,D) \mid D\in \Pic(X), s|_{R} = 0 \in H^0(R,D|_{R})
  \right).
\]
Ideals of the form $I(R)$ are \emph{saturated}.
% See \S\ref{sec_apolarity} for alternative algebraic definition.
Note that $\apolar$ is never saturated.
Then the apolarity theory
(in particular, Proposition~\ref{prop_multigraded_apolarity})
implies the following alternative definitions of decompositions and ranks, that shift attention from schemes or finite collections of points to ideals in $S$. Fix $F\in \PPof{H^0(X,L)^*}$ and a subscheme $R\subset X$. We have:

\begin{itemize}
 \item $F\in \linspan{R}$ if and only if $I(R)\subset \apolar$.
 \item
 $\setfromto{p_1}{p_{r(F)}}$
   is a decomposition if and only if
   $I(\fromto{p_1}{p_{r(F)}})\subset \apolar$.
 \item
 $R$ is a cactus decomposition
   if and only if
   $I(R) \subset \apolar$ and
   for all $D\in \Pic(X)$ we have
   $\dim_{\kk} (S/I)_D \le r$.
 \item  The $X$-rank of $F$ is equal to
 \[
   \min\set {r \mid
      \exists I\subset S \text{ homog., radical, saturated, } \forall_D \dim_{\kk} (S/I)_D \le r \text{, and }  I \subset \apolar}.
 \]
 \item The $X$-cactus rank of $F$ is equal to
 \[
   \min\set {r \mid
      \exists I\subset S \text{ homogeneous, saturated, } \forall_D \dim_{\kk} (S/I)_D \le r \text{, and }  I \subset \apolar}.
 \]
\end{itemize}

\subsection{Main results}

With these ideas in mind, we define the border decomposition of $F$ to be an appropriate ideal of $S$ contained in  $\apolar$ and witnessing the membership of $F\in \sigma_r(X)$.
Our first result is the generalisation of border apolarity \cite[Thm~1.2]{nisiabu_jabu_border_apolarity} to fields of any characteristics.
\begin{thm}[Weak border apolarity]
    \label{thm_border_apolarity_intro}
    Suppose $F\in \sigma_r(X)$.
    Then there exists a homogeneous ideal $I \subset S$ such that:
    \begin{itemize}
     \item $I\subset \apolar$,
     \item for each multidegree $D$ the $D$-th graded piece $I_D$ of $I$
           has codimension (in $S[X]_D$) equal to $\min(r, \dim S[X]_D)$.
    \end{itemize}
    Moreover, if $B$ is a connected solvable group acting on $X$ and preserving $F$,
      then there exists an $I$ as above which in addition is invariant under
      $B$.
\end{thm}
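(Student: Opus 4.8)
The plan is to realise the ideal $I$ as a $\kk$-point of a multigraded Hilbert scheme, to reduce to generic $F$ by a properness argument, and to obtain the invariant refinement from the Borel fixed point theorem. Set $h\colon\Pic(X)\to\NN$, $h(D)=\min(r,\dim_\kk S[X]_D)$. By the theorem of Haiman and Sturmfels, which holds over an arbitrary Noetherian base and so in particular over our $\kk$ of any characteristic, the functor of homogeneous ideals $I\subset S=S[X]$ with $\dim_\kk(S/I)_D=h(D)$ for all $D$ is represented by a \emph{projective} $\kk$-scheme $\usualHilb^{h}_{S}$, whose $\kk$-points are exactly the ideals satisfying the second bullet of the theorem. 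Since $\hook$ is an action, $\apolar$ is automatically an ideal, so for fixed $F$ the condition $I\subset\apolar$ is the vanishing of $\Theta\hook F$ as $\Theta$ ranges over the universal ideal on $\usualHilb^{h}_{S}$ in the finitely many degrees that generate it (bounded in terms of $h$ alone, again by Haiman--Sturmfels). Hence the locus $\mathcal Z\subset\usualHilb^{h}_{S}\times\PPof{H^0(X,L)^*}$ of pairs $(I,F)$ with $I\subset\apolar$ is closed, and the projection $p\colon\mathcal Z\to\PPof{H^0(X,L)^*}$ is projective, hence proper. The first two bullets assert precisely that $F\in p(\mathcal Z)$; since $p(\mathcal Z)$ is closed and $\sigma_r(X)$ is irreducible, it suffices to find, for $F$ in a dense subset of $\sigma_r(X)$, one ideal in $p^{-1}(F)$.

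For the base case, recall that $\sigma_r(X)$ is the closure of the set of $F=[v_1+\dots+v_r]$ with the $v_i$ on the affine cone $\widehat X$; such an $F$ lies in the span $\linspan{\fromto{q_1}{q_r}}$ of the points $q_i=[v_i]$, so by the apolarity dictionary (Proposition~\ref{prop_multigraded_apolarity}) the homogeneous ideal $I$ of $\set{\fromto{q_1}{q_r}}$ satisfies $I\subset\apolar$. Since $\set{\fromto{q_1}{q_r}}$ has degree $r$ one always has $\dim_\kk(S/I)_D\le\min(r,\dim S_D)$, and the content is the reverse inequality: that $r$ general points of $X$ impose the maximal possible number of conditions on every complete linear system $\lvert D\rvert$. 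This is a general-position fact — the morphism attached to $\lvert D\rvert$ (defined away from the base locus, which general points avoid) sends $X$ onto a non-degenerate irreducible subvariety of $\PPof{H^0(X,D)^*}$, general points of $X$ map to general points of the image, and general $k$-tuples of points on a non-degenerate variety in $\PP^M$ are in linear general position for $k\le M+1$, for otherwise the variety would lie in the span of $k-1$ of its points. To make a single choice of the $v_i$ work for all of the infinitely many $D$ simultaneously — an issue only when $\kk$ is countable — one runs the argument at the generic point of $(\widehat X)^r$ and spreads out, obtaining $p^{-1}(F)\neq\emptyset$ on a dense open of $\sigma_r(X)$; with the reduction of the first paragraph this gives the first two bullets for all $F\in\sigma_r(X)$.

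For the invariant refinement, let $B$ be connected and solvable, acting on $X$ and fixing $F$. Then $B$ acts compatibly on $S$ and $\widetilde S$, and since it fixes $F$ up to scalar while $\apolar[cF]=\apolar$ for $c\neq0$, it preserves $\apolar$. Being connected, $B$ acts trivially on the discrete group $\Pic(X)$, so it respects the $\Pic(X)$-grading, acts on $\usualHilb^{h}_{S}$, and preserves the closed $B$-subscheme $p^{-1}(F)\subset\usualHilb^{h}_{S}$, which by the previous paragraph is non-empty and projective over $\kk$. By the Borel fixed point theorem, valid over any algebraically closed field in any characteristic, the connected solvable group $B$ has a $\kk$-point of $p^{-1}(F)$ that it fixes; such a point is the desired $B$-invariant ideal $I$.

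The delicate point throughout is that the codimension of $I_D$ must equal $\min(r,\dim S_D)$ \emph{exactly}, not merely be at most that. For the ideal produced in the base case this is precisely the linear general-position statement for $r$ points; for an arbitrary $F\in\sigma_r(X)$ it is forced by the fact that $\usualHilb^{h}_{S}$ parametrises ideals of one \emph{fixed} Hilbert function and is proper, so that the equality is inherited by the limit of the ideals attached to nearby general points. I expect this — rather than any single computation — to be the heart of the argument; the remaining ingredients (projectivity of the multigraded Hilbert scheme over an arbitrary base, linear general position of points, and the Borel fixed point theorem) are available off the shelf in arbitrary characteristic.
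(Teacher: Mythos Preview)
Your argument is correct and follows essentially the same architecture as the paper: properness of the incidence variety over $\PPof{H^0(X,L)^*}$, reduction to general $F$ via the ideal of $r$ general points, and Borel's fixed point theorem for the $B$-invariant refinement.

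Two differences are worth noting. First, the paper does not use the full multigraded Hilbert scheme $\Hilb_h(S[X])$ as the parameter space but rather the single irreducible component $\Slip_r(X)=\mathbf{H}_{\ccH}$ attached to the smoothable component $\ccH=\usualHilbsm{X}$; the incidence variety is then the relative Grassmannian $\relGr(1,\ccS_{\mathbf{H}})$ of Proposition~\ref{prop_descriptions_of_relative_span}, and surjectivity onto $\sigma_r(X)$ comes from the explicit equality $\sigma_r(X)=\xi_{\mathbf{H}}(\relGr(1,\ccS_{\mathbf{H}}))$ rather than from a density-plus-closure argument. This buys the stronger conclusion of Theorem~\ref{thm_border_apolarity_smoothable_and_slip} (that $[I]\in\Slip_r(X)$, not merely $[I]\in\Hilb_h(S[X])$), which is needed for the ``if and only if'' and for the later cactus theorems; your version suffices for the weak statement as written. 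Second, the step you flag as the heart of the matter --- that general $r$ points have Hilbert function exactly $h_{r,X}$ in \emph{all} multidegrees simultaneously, handled by you via ``run the argument at the generic point and spread out'' --- is precisely Theorem~\ref{thm_Sip_open}, and the paper's proof of it unpacks the spreading-out into the chain: openness of saturation (Theorem~\ref{thm_saturation_is_open_property}) $\Rightarrow$ the fibre ideal agrees with the ideal of the fibre generically (Lemma~\ref{lem_ideal_of_restriction_is_restriction_of_ideal_on_open}) $\Rightarrow$ only finitely many Hilbert functions occur and the generic one is open (Theorem~\ref{thm_structure_of_Hilbert_functions_of_fibres}). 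Your sketch is on target, but these are the lemmas that make it rigorous over an arbitrary $\kk$.
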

The main nontrivial difference between this statement and the arguments in \cite{nisiabu_jabu_border_apolarity}
is in a strengthening of \cite[Lem.~3.9]{nisiabu_jabu_border_apolarity} about a very general tuple of points to an analogous claim for a general tuple of points.
This is dealt with in Theorem~\ref{thm_Sip_open}.
The second result generalises the border decompositions above to border cactus decomposition.

\begin{thm}[Weak Apolarity for Border Cactus Decompositions, or weak ABCD]\label{thm_weak_abcd}
   Fix a toric variety $X$ as above and a positive integer $r$.
   Then there exists a finite collection $\setfromto{h_1}{h_k}$ of Hilbert functions
   $h_i\colon \Pic(X)\to \NN$
     that have the following properties.
   Firstly, they are Hilbert functions of a finite subscheme $R_i\subset X$ of degree $r$, that is $h_i(D)= \dim_{\kk}(S/I(R_i))_D$.
   Secondly, suppose $L$ is a very ample line bundle and
     $F \in \PP(H^0(X,L)^*)$.
     If  $F \in \cactus{r}{X}$ (with $X$ embedded using the linear system $|L|$)
     then there exists a homogeneous ideal
     $I \subset S[X]$
     with Hilbert function~$h_i$ for some $i$,
   such that $I \subset \apolar$.
\end{thm}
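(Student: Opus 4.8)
The plan is to realise $F$ as a limit of points lying on spans of a moving degree-$r$ subscheme of $X$, to pass to the flat limit of the associated saturated ideals inside a multigraded Hilbert scheme, and to observe that the apolarity condition is closed and hence survives the limit. Concretely: a length-$<r$ subscheme of $X$ embeds into a length-$r$ one (add reduced points; the case $\dim X=0$ is trivial), and enlarging $R$ only enlarges $\linspan{R}$, so $\cactus{r}{X}=\overline{\bigcup_{\deg R=r}\linspan{R}}$. Hence, given $F\in\cactus{r}{X}$, the standard procedure of joining $F$ to a point of $\bigcup_{\deg R=r}\linspan{R}$ by a curve inside the closure of the incidence locus $\set{(R,G)\mid \deg R=r,\ G\in\linspan{R}}$ (a constructible subset of $\usualHilb^r(X)\times\PP(H^0(X,L)^*)$) and normalising produces a smooth pointed curve $(T,t_0)$, a flat family $\mathcal{R}\subset X\times(T\setminus t_0)$ of length-$r$ subschemes of $X$ with fibres $R_t$, and a morphism $T\to\PP(H^0(X,L)^*)$, $t\mapsto F_t$, with $F_{t_0}=F$ and $F_t\in\linspan{R_t}$ for every $t\neq t_0$; by Proposition~\ref{prop_multigraded_apolarity} the latter means $I(R_t)\subseteq\apolar[F_t]$.

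For the finite list, let $\setfromto{h_1}{h_k}$ be \emph{all} functions $\Pic(X)\to\NN$ that occur as $h_R\colon D\mapsto\dim_{\kk}(S[X]/I(R))_D$ for a length-$r$ subscheme $R\subset X$; each is by definition the Hilbert function of a degree-$r$ subscheme, so the first assertion of the theorem holds once we know this list is finite. Finiteness is the one substantive input, and I expect it to be the main obstacle: it should follow from the boundedness of the Hilbert scheme of length-$r$ subschemes of $X$ (projective, hence Noetherian and of finite type) together with a uniform multigraded Castelnuovo--Mumford regularity bound over this family, which bounds uniformly in $R$ the number of chambers and the leading quasi-polynomial data of the piecewise-quasi-polynomial function $h_R$. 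This finiteness is precisely the correspondence between the usual Hilbert scheme and the multigraded Hilbert scheme announced in the introduction; everything that follows is formal.

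Now for the limit ideal. By the finiteness just invoked, $t\mapsto h_{R_t}$ takes only finitely many values on $T\setminus t_0$, and each value other than the generic one is attained on a finite set (indeed, if $h_{R_t}$ equals a non-generic $h_j$ at $t$, then $h_{R_t}(D)$ differs from its generic value at some $D$, a locally closed hence finite condition on the curve); so, discarding finitely many points of $T\setminus t_0$, we may assume $h_{R_t}$ equals a fixed $h$ for all $t\neq t_0$, necessarily $h=h_i$ for some $i$. Writing $p\colon\mathcal{R}\to T\setminus t_0$ for the (finite flat, degree $r$) projection, the sheaf $(S[X]/I(R_t))_D$ on $T\setminus t_0$ — the image of the restriction map $H^0(X,D)\otimes\ccO\to p_*(D|_{\mathcal{R}})$ into a rank-$r$ locally free sheaf over the Dedekind base — is then torsion-free, hence locally free, of constant rank $h(D)$; equivalently the $I(R_t)_D\subset S[X]_D$ are subbundles, i.e.\ $t\mapsto I(R_t)$ is a morphism $T\setminus t_0\to\usualHilb^{h_i}(S[X])$ to the multigraded Hilbert scheme. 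As that scheme is projective (Haiman--Sturmfels, the $\Pic(X)$-grading of $S[X]$ being positive since $X$ is projective), the valuative criterion applied to the DVR $\ccO_{T,t_0}$ extends this uniquely to a morphism $T\to\usualHilb^{h_i}(S[X])$; let $I\subset S[X]$ be the homogeneous ideal corresponding to the image of $t_0$, so that $I$ has Hilbert function $h_i$.

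Finally, passing to the limit in the apolarity condition. On $M:=\usualHilb^{h_i}(S[X])\times\PP(H^0(X,L)^*)$ put $\Xi:=\set{(J,G)\mid J\subseteq\apolar[G]}$. For each $D$ one has $\apolar[G]_D=\ker\bigl(S[X]_D\to\widetilde{S}_{L-D},\ \Theta\mapsto\Theta\hook G\bigr)$, depending linearly on $G$ and equal to all of $S[X]_D$ unless both $H^0(X,D)$ and $H^0(X,L-D)$ are nonzero; the set of such $D$ is finite, since the semigroup of effective divisor classes of the projective toric variety $X$ is finitely generated and pointed. Thus $J\subseteq\apolar[G]$ is the intersection of finitely many closed conditions (vanishing of a map of sheaves built from the universal subbundle on $\usualHilb^{h_i}(S[X])$ and the tautological line on $\PP(H^0(X,L)^*)$), so $\Xi$ is closed in $M$. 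The morphism $T\to M$, $t\mapsto(J_t,F_t)$, with $J_t=I(R_t)$ for $t\neq t_0$ and $J_{t_0}=I$, maps the dense open $T\setminus t_0$ into $\Xi$ by the previous paragraph, hence maps all of $T$ into $\Xi$; evaluating at $t_0$ gives $I\subseteq\apolar$, which is what was required.
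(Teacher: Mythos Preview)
Your argument is correct but proceeds quite differently from the paper. The paper deduces Theorem~\ref{thm_weak_abcd} as an immediate corollary of the stronger if-and-only-if statement (Theorem~\ref{thm_abcd}), which is in turn proved via the distinguished components $\mathbf{H}_{\ccH}\subset\Hilb(S[X])$ of Proposition~\ref{prop_distinguished_components} and the closure-free description $\cactus{\ccH,i}{X}=\xi_{\mathbf{H}}(\relGr(i,\ccS_{\mathbf{H}}))$ of Proposition~\ref{prop_descriptions_of_relative_span}. In that approach the finite list is precisely $\{h_{\ccH}:\ccH\text{ a component of }\usualHilb_r(X)\}$, the \emph{generic} Hilbert functions on components, and the ideal $I$ produced lies in $\mathbf{H}_{\ccH}$---a limit of saturated ideals with that generic function. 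Your direct curve-and-limit argument bypasses the $\mathbf{H}_{\ccH}$ machinery entirely, at the cost of a larger list (all Hilbert functions of length-$r$ subschemes, not only the generic ones) and a weaker conclusion about where $[I]$ sits in the multigraded Hilbert scheme. The paper's route therefore yields both the converse implication and a shorter, more computationally useful list; yours is more elementary and self-contained.

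Two small remarks. First, the finiteness you invoke is not the ``correspondence announced in the introduction'' (that is Proposition~\ref{prop_distinguished_components}, which only controls the generic functions $h_{\ccH}$) but rather Theorem~\ref{thm_structure_of_Hilbert_functions_of_fibres}\ref{item_finitely_many_Hilbert_functions_in_a_family} applied to the universal family over $\usualHilb_r(X)$; your regularity sketch is plausible but the paper proves it by Noetherian induction and generic flatness. Second, your closedness argument for $\Xi$ works, but Lemma~\ref{lem_ideals_contained_in_annihilators} gives a one-line replacement: $J\subset\apolar[G]$ is equivalent to the single closed condition $J_L\subset G^{\perp}$, so no finiteness of the ``interesting'' degrees $D$ is needed.
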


The above theorem is a consequence of stronger Theorem~\ref{thm_abcd},
   which is an ``if and only if'' statement.
For this purpose, we compare the multigraded Hilbert scheme
$\Hilb(S[X])$ and the usual Hilbert scheme $\usualHilb(X)$.

\begin{prop}\label{prop_distinguished_components}
   For each irreducible component
      $\ccH\subset \reduced{\usualHilb(X)}$
      there is a unique irreducible component $\mathbf{H}_{\ccH} \subset \reduced{\Hilb(S[X])}$ such that:
      \begin{itemize}
       \item a general ideal $I\in \mathbf{H}_{\ccH}$ is saturated,
       \item the natural map $\sch\colon\Hilb(S[X]) \to \usualHilb(X)$
             taking a homogeneous ideal $I\subset S[X]$
             to the subscheme of $X$ defined by $I$
             restricts to a birational morphism
              $\mathbf{H}_{\ccH}\to \ccH.$
      \end{itemize}
\end{prop}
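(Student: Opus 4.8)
\emph{The plan.}
The idea is to realise $\mathbf{H}_{\ccH}$ as the closure of the image of a rational ``saturation'' section of $\sch$ defined over a dense open of $\ccH$, and then to prove maximality and uniqueness using the fact that a saturated ideal is recovered from the subscheme it cuts out, which pins down the fibres of $\sch$ over that open locus. Concretely, fix the generic point $\eta$ of $\ccH$, let $R_{\eta}$ be the corresponding $\kk(\eta)$-rational subscheme of $X$ ($\kk(\eta)$ the function field of $\ccH$), and let $h\colon\Pic(X)\to\NN$ be the Hilbert function of its (finitely generated, $\kk(\eta)$-rational) saturated ideal $I(R_{\eta})$; write $\Hilb(S[X])^{h}\subseteq\Hilb(S[X])$ for the open and closed piece of ideals with that Hilbert function. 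Using upper semicontinuity of $\dim_{\kk}I(R)_{D}$ in each degree $D$ (so $h$ is the maximal, generic value of $R\mapsto\dim_{\kk}(S[X]/I(R))_{D}$), the finite type property of $\Hilb(S[X])^{h}$, and standard spreading-out, one obtains a dense open $\ccH^{\circ}\subseteq\ccH$ over which the universal subscheme yields a flat family of $\Pic(X)$-homogeneous ideals with constant Hilbert function $h$, namely $R\mapsto I(R)$. As $\ccH^{\circ}$ is reduced, this family is classified by a morphism $\sat\colon\ccH^{\circ}\to\reduced{\Hilb(S[X])}$ with image in $\Hilb(S[X])^{h}$, all of whose members are saturated and which satisfies $\sch\circ\sat=\operatorname{id}_{\ccH^{\circ}}$. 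Set $\mathbf{H}_{\ccH}:=\overline{\sat(\ccH^{\circ})}\subseteq\reduced{\Hilb(S[X])}$, an irreducible closed subset in which $\sat(\ccH^{\circ})$ is dense, so a general member of $\mathbf{H}_{\ccH}$ is saturated.

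\emph{Recovery of ideals and birationality.}
The decisive point is that for $R\in\ccH^{\circ}$ the ideal $I(R)$ is the unique point of $\Hilb(S[X])^{h}$ lying over $R$: if $\sch(I)=R$ then the saturation of $I$ equals $I(R)$, so $I_{D}\subseteq I(R)_{D}$ for all $D$, and the equalities $\dim_{\kk}(S[X]/I)_{D}=h(D)=\dim_{\kk}(S[X]/I(R))_{D}$ force $I_{D}=I(R)_{D}$, i.e.\ $I=I(R)$. By continuity $\sch(\mathbf{H}_{\ccH})\subseteq\overline{\sch(\sat(\ccH^{\circ}))}=\ccH$, so $\sch$ restricts to a morphism $\mathbf{H}_{\ccH}\to\ccH$, which is dominant (its image contains $\ccH^{\circ}$), while $\sat$ is a dominant rational map in the other direction by the very definition of $\mathbf{H}_{\ccH}$, and $\sch\circ\sat=\operatorname{id}$. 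Hence $\sch$ and $\sat$ are mutually inverse birational maps and $\sch|_{\mathbf{H}_{\ccH}}\colon\mathbf{H}_{\ccH}\to\ccH$ is a birational morphism; in particular $\dim\mathbf{H}_{\ccH}=\dim\ccH$. (Using a rational section here, rather than generic injectivity, avoids separability issues in positive characteristic.)

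\emph{Maximality and uniqueness.}
Suppose $\mathbf{H}_{\ccH}\subseteq Y$ with $Y$ irreducible and closed in $\reduced{\Hilb(S[X])}$. Then $Y$ lies in a single Hilbert-function piece, necessarily $\Hilb(S[X])^{h}$, and $\overline{\sch(Y)}$ is irreducible, closed and contains $\ccH^{\circ}$, hence equals $\ccH$ because $\ccH$ is a component of $\reduced{\usualHilb(X)}$; so $\sch|_{Y}$ is dominant onto $\ccH$, and by the recovery statement its fibre over a general point of $\ccH$ is a single point, whence $\dim Y=\dim\ccH=\dim\mathbf{H}_{\ccH}$ and $Y=\mathbf{H}_{\ccH}$. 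Thus $\mathbf{H}_{\ccH}$ is an irreducible component of $\reduced{\Hilb(S[X])}$ with both stated properties. For uniqueness, let $\mathbf{H}'$ be a component whose general member is saturated and with $\sch|_{\mathbf{H}'}\colon\mathbf{H}'\to\ccH$ birational; a general saturated $I\in\mathbf{H}'$ equals $I(\sch(I))$ with $\sch(I)$ general in $\ccH$, hence in $\ccH^{\circ}$, so $I$ has Hilbert function $h$ and $\mathbf{H}'\subseteq\Hilb(S[X])^{h}$; on the dense open $(\sch|_{\mathbf{H}'})^{-1}(\ccH^{\circ})$ the saturated members are precisely the $I(R)=\sat(R)$ for $R\in\ccH^{\circ}$, which also lie in $\mathbf{H}_{\ccH}$. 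As these form a dense subset of $\mathbf{H}'$ and $\mathbf{H}_{\ccH}$ is closed, $\mathbf{H}'\subseteq\mathbf{H}_{\ccH}$, and both being components, $\mathbf{H}'=\mathbf{H}_{\ccH}$.

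\emph{Main obstacle.}
The hard step is the construction of $\sat$: one must upgrade the generic saturated ideal to a genuine flat family carrying a single constant Hilbert function over a dense open of $\ccH$, and since $\Pic(X)$ has infinitely many degrees the naive loci (one per degree $D$, where $\dim_{\kk}(S[X]/I(R))_{D}$ attains its generic value) are dense but not obviously jointly open. The clean route is to argue over the generic point $\kk(\eta)$, where $I(R_{\eta})$ is a single finitely generated ideal, and then spread out, combining boundedness of $\Hilb(S[X])^{h}$ with generic freeness of the twisted pushforwards of the universal ideal sheaf; everything else is dimension bookkeeping with irreducible varieties. We also use throughout that $\sch$ itself is a morphism --- equivalently that a flat family of $\Pic(X)$-homogeneous ideals cuts out a flat family of subschemes of $X$ --- which we take to be established before this proposition.
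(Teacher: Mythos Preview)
Your argument is correct and follows the same overall architecture as the paper: construct a section $\sat$ (the paper calls it $\idealmap$) of $\sch$ over a dense open $\ccH^{\circ}$, take $\mathbf{H}_{\ccH}$ to be the closure of its image, and read off birationality and uniqueness from the section. Two points of comparison are worth noting.

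First, for the ``main obstacle'' --- producing a single dense open $\ccH^{\circ}$ on which all graded pieces of $R\mapsto I(R)$ are simultaneously locally free --- the paper does not spread out from the generic point as you sketch. Instead it invokes a sharp result (Theorem~2.1, due to Jelisiejew--Ma\'ndziuk) that being saturated with respect to a fixed ideal is an \emph{open} condition in a flat family of homogeneous ideals. Combined with the observation that saturation is preserved under localisation to the generic point, this immediately gives the required open $\ccH^{\circ}$ on which $I_{\ccZ}\otimes\kappa(v)=I_{\ccZ_v}$. Your spreading-out route would work too, but the openness-of-saturation theorem is the clean lever here and is what the paper uses repeatedly.

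Second, your maximality argument is slightly more elementary than the paper's. To show $\mathbf{H}_{\ccH}$ is a component, the paper takes an irreducible $\mathbf{H}\supseteq\mathbf{H}_{\ccH}$, uses openness of saturation again to find a dense open $U\subset\mathbf{H}$ of saturated ideals, and then cites an external result that $\sch$ restricted to saturated ideals is a locally closed immersion, giving $\dim\mathbf{H}=\dim\sch(U)\le\dim\ccH$. Your ``recovery'' argument --- that over $\ccH^{\circ}$ the fibre of $\sch|_{\Hilb^{h}}$ is a single point because $I\subseteq I^{\sat}=I(R)$ with matching Hilbert functions forces $I=I(R)$ --- reaches the same dimension bound without that citation, and is a nice self-contained alternative.
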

% The proof is in Section~\ref{sec_ideal_to_scheme_morphisms}.

For each $\ccH$ as in the proposition, there is an integer valued function
$h_{\ccH}\colon \Pic(X)\to \NN$
which is the Hilbert function of any $I\in \mathbf{H}_{\ccH}$.
Equivalently,
  if $[R]\in \ccH$ is a general element, and $R\subset X$ is the corresponding scheme, then
  $h_{\ccH}$ is the Hilbert function of $I_R\subset S[X]$, see Theorem~\ref{thm_structure_of_Hilbert_functions_of_fibres}.

\begin{thm}[Apolarity for Border Cactus Decompositions, or ABCD]\label{thm_abcd}
   Suppose $F \in \PPof{H^0(X,L)^*}$.
   Then $F\in \cactus{r}{X}$ if and only if for some irreducible component
   $\ccH\subset \usualHilb_r(X)$ of the Hilbert scheme of points
     of length $r$ there exists a homogeneous ideal $I \subset S[X]$
     with $[I]\in \mathbf{H}_{\ccH}$ and $I \subset \Ann(F)$.
   Moreover, $\ccH$ might be chosen to be a component containing
     Gorenstein schemes.
\end{thm}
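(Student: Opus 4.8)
The plan is to realise $\cactus{r}{X}$ as an explicit finite union of irreducible closed subsets of $\PPof{H^0(X,L)^*}$, one for each component $\ccH$ of $\usualHilb_r(X)$, assembled from the components $\mathbf{H}_{\ccH}$ of Proposition~\ref{prop_distinguished_components}. The starting point will be a sharpening of the apolarity dictionary: for $0\ne F\in \widetilde{S}_L$ and a homogeneous ideal $I\subset S[X]$ one has $I\subset \Ann(F)$ if and only if $I_L\subset F^{\perp}$, where $F^{\perp}\subset S[X]_L$ is the hyperplane perpendicular to $F$ under the perfect pairing $S[X]_L\times \widetilde{S}_L\to\kk$ induced by $\hook$. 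The implication ``$\Rightarrow$'' is immediate; for ``$\Leftarrow$'' one notes that for $\Theta\in I_D$ and $\Psi\in S[X]_{L-D}$ we have $\Psi\hook(\Theta\hook F)=(\Psi\Theta)\hook F$ with $\Psi\Theta\in I_L\subset F^{\perp}$, so $\Theta\hook F=0$. Hence membership $I\subset\Ann(F)$ depends only on the single graded piece $I_L$, and $\linspan{R}=\set{[F]\mid I(R)\subset\Ann(F)}$ is cut out already by $I(R)_L$.

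Consequently $[I]\mapsto I_L$ is a morphism $\mathbf{H}_{\ccH}\to \mathrm{Gr}\bigl(\dim S[X]_L-h_{\ccH}(L),\,S[X]_L\bigr)$, and composing with the perpendicular isomorphism of Grassmannians, $[I]\mapsto I_L^{\perp}\subset\widetilde{S}_L$; here $h_{\ccH}(L)\ge 1$ because $L$ is base‑point free, so a nonzero length‑$r$ scheme imposes at least one condition. The incidence locus
\[
  \mathcal Z_{\ccH}:=\set{([I],[F])\in\mathbf{H}_{\ccH}\times\PPof{\widetilde{S}_L}\mid I\subset\Ann(F)}
\]
is then the pullback of the projectivised tautological subbundle over $\mathrm{Gr}(h_{\ccH}(L),\widetilde{S}_L)$, so $\mathcal Z_{\ccH}\to\mathbf{H}_{\ccH}$ is a Zariski‑locally‑trivial $\PP^{h_{\ccH}(L)-1}$‑bundle; in particular $\mathcal Z_{\ccH}$ is irreducible and projective. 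Let $V_{\ccH}\subset\PPof{\widetilde{S}_L}$ be its image under the second projection, an irreducible closed set. I claim $\cactus{r}{X}=\bigcup_{\ccH}V_{\ccH}$, which is the asserted equivalence once ``$[F]\in V_{\ccH}$'' is unwound as ``$\exists[I]\in\mathbf{H}_{\ccH}$ with $I\subset\Ann(F)$''.

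For $V_{\ccH}\subset\cactus{r}{X}$: by Proposition~\ref{prop_distinguished_components} the locus $U\subset\mathbf{H}_{\ccH}$ of saturated ideals is dense open, and over $U$ the fibre of $\mathcal Z_{\ccH}$ above $[I(R)]$ maps onto $\linspan{R}\subset\cactus{r}{X}$ since $\deg R=r$; as $\mathcal Z_{\ccH}\to\mathbf{H}_{\ccH}$ is a projective bundle, the preimage of $U$ is dense in $\mathcal Z_{\ccH}$, and $\cactus{r}{X}$ being closed forces $V_{\ccH}\subset\cactus{r}{X}$. For the reverse inclusion I would first observe $\cactus{r}{X}=\overline{\bigcup_{\deg R=r}\linspan{R}}$: any shorter $R$ sits inside some length‑$r$ subscheme $R'$ (add general reduced points, using $\dim X\ge 1$), and $I(R')_L\subset I(R)_L$ gives $\linspan{R}\subset\linspan{R'}$. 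Writing $\usualHilb_r(X)=\bigcup_{\ccH}\ccH$, it then suffices to show $\bigcup_{[R]\in\ccH}\linspan{R}\subset V_{\ccH}$ for each $\ccH$: for $[R]$ in the dense open set where $I(R)\in\mathbf{H}_{\ccH}$ this is built in, and for arbitrary $[R]\in\ccH$ one takes a curve $[R_t]\to[R]$ with $[R_t]$ general and uses that the flat limit of the subspaces $I(R_t)_L$ is contained in $I(R)_L$ (a section of $L$ vanishing on all nearby $R_t$ vanishes on the limit $R$), whence $\linspan{R}\subset\lim_t\linspan{R_t}\subset V_{\ccH}$. Taking closures, a finite union of closed sets, yields $\cactus{r}{X}\subset\bigcup_{\ccH}V_{\ccH}$ and completes the equivalence.

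For the ``moreover'' clause it is enough, running the whole argument only over those components $\ccH$ that meet the (open, by openness of Gorenstein‑ness in flat families of finite schemes, hence dense) locus of Gorenstein subschemes, to establish $\cactus{r}{X}=\overline{\bigcup\set{\linspan{R}\mid R\subset X\text{ finite Gorenstein},\ \deg R\le r}}$. I would prove this by induction on $r$: given a component $\ccH_0$ of $\usualHilb_r(X)$ whose general member $R_0$ is not Gorenstein, the goal is to show the general $F\in\linspan{R_0}$ lies in $\linspan{R'}$ for some $R'$ with $\deg R'<r$, so that $\linspan{R_0}\subset\cactus{r-1}{X}$, and then to enlarge a Gorenstein witness of length $\le r-1$ to one of length $r$ by adding general reduced points. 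The main obstacle is exactly this reduction step — that a general span point of a non‑Gorenstein finite scheme always admits a strictly shorter apolar scheme; it is where one must exploit the interplay between the Artinian Gorenstein apolar algebra $S[X]/\Ann(F)$ and the saturated ideals contained in $\Ann(F)$, and where the description of the functions $h_{\ccH}$ and of the fibres of $\sch$ in Theorem~\ref{thm_structure_of_Hilbert_functions_of_fibres} will be used.
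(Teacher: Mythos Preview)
Your argument for the main equivalence is essentially the paper's own, specialised to $i=1$: the incidence $\mathcal Z_{\ccH}$ is the paper's $\relGr(1,\ccS_{\mathbf{H}})$ of \S\ref{sec_grassmannian_bundles}, and your $V_{\ccH}$ is $\cactus{\ccH,1}{X}$; the paper packages this as Theorem~\ref{thm_border_apolarity_general_scheme} via Proposition~\ref{prop_descriptions_of_relative_span}. One small difference: for the inclusion $\bigcup_{[R]\in\ccH}\linspan{R}\subset V_{\ccH}$ you run a curve degeneration, whereas the paper simply uses that $\sch|_{\mathbf{H}_{\ccH}}\colon\mathbf{H}_{\ccH}\to\ccH$ is projective and birational, hence surjective, so every $[R]$ has a preimage $[I]$ with $I\subset I(R)$ and thus $\linspan{R}\subset\PPof{I_L^{\perp}}$. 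Your curve argument is also correct (the flatness step is exactly Lemma~\ref{lem_flat_family_determined_by_generic_fibre}), but the surjectivity route is shorter and avoids the somewhat delicate limit of subspaces.

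The genuine gap is in the ``moreover'' clause. You correctly identify that it suffices to show $\cactus{\ccH_0,1}{X}\subset\cactus{r-1}{X}$ whenever $\ccH_0$ contains no Gorenstein scheme, and you correctly reduce this to: for non-Gorenstein $R_0$ of length $r$, every $F\in\linspan{R_0}$ lies in $\linspan{R'}$ with $\deg R'<r$. But you do not prove this, and your hints about $S[X]/\Ann(F)$ and Theorem~\ref{thm_structure_of_Hilbert_functions_of_fibres} point in the wrong direction. The actual mechanism is local and has nothing to do with Hilbert-function stratifications of families: if $R_0=\Spec A$ is local Artinian and $F\in\linspan{R_0}$ corresponds to a functional $\phi\in A^*$, then the quotient of $A$ by the kernel of the bilinear form $(a,b)\mapsto\phi(ab)$ is Gorenstein with $F$ in its span; globally one treats each connected component of $R_0$ separately. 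This is the content the paper imports as \cite[Cor.~6.20]{jabu_jelisiejew_finite_schemes_and_secants} rather than reproving. Either cite that result, or supply the local-algebra argument directly; the induction you set up will not close without it.
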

% The proof is in Section~\ref{sec_cactus_variety}.

Theorems~\ref{thm_border_apolarity_intro}, \ref{thm_weak_abcd}, \ref{thm_abcd} are stated and proven more generally, when instead of $F\in \PPof{H^0(X,L)^*}$ we have a linear subspace $W\subset H^0(X,L)^*$ of any dimension (not necessarily equal to $1$).

\subsection*{Acknowledgements}
We are grateful to Joachim Jelisiejew, Joseph Landsberg, and Tomasz Mańdziuk
  for their helpful comments.
The authors are supported by the research grant ``Complex contact manifolds and geometry of secants'', 2017/26/E/ST1/00231 awarded by National Science Center, Poland.
In addition, J.~Buczy{\'n}ski is partially supported by the project ``Advanced problems in contact manifolds, secant varieties, and their generalisations (\mbox{APRICOTS+})'',  2023/51/B/ST1/02799, also awarded by National Science Center, Poland.
Moreover, part of the research towards the results of this article was done during the scientific semester Algebraic Geometry with Applications to Tensors and Secants in Warsaw.
The authors are greatful to many fruitful discussions with the participants, and for the partial support by the Thematic Research Programme ``Tensors: geometry, complexity and quantum entanglement'', University of Warsaw, Excellence Initiative---Research University and the Simons Foundation Award No.~663281 granted to the Institute of Mathematics of the Polish Academy of Sciences for the years 2021-2023.

\section{Multihomogeneous ideals, saturation, and toric varieties}
In this section,
 we recall a result that under minor homogeneity assumptions the set of saturated ideals in a flat family of multihomogeneous ideals is open.
Moreover, we show that a general configuration of points in a smooth complete toric variety has the expected multigraded Hilbert function.

For the purpose of recursive self referencing we work here over an arbitrary base field
$\KK$ of arbitrary characteristics,  without assuming it being algebraically closed.
This choice  allows us to work  with families of ideals paramaterised by various schemes, for instance $\Spec \kk[t]$.
We need our arguments to be applicable also to non-closed fibres of our family, for instance, to ideals
in $\kk(t)[\fromto{\alpha_1}{\alpha_m}]$. In this case  $\KK=\kk(t)$, which is never algebraically closed.

\subsection{Openness of saturation and generic fibre}

We work with a polynomial ring $S=S_{\KK}= \KK[\fromto{\alpha_1}{\alpha_m}]$.
Suppose $A$ is a finitely generated abelian group,
that is $A\simeq \ZZ^q \oplus A^{\tor}$, where $q\geqslant 0$, $A^{\tor}$ is a finite abelian group.
An $A$-grading of $S$ is a homomorphism (called the \emph{degree}) from the semigroup of monomials in $S_{\KK}$ to $A$.
An $A$-grading is \emph{positive} if the only monomial of degree $0$ is $1\in S_{\KK}$.
Note that the grading is positive if and only if a
$S_D$ is finite dimensional vector space over $\KK$ for all $D\in A$.
In the situations considered in this article we  sometimes vary the field $\KK$, but  the degree morphism
is fixed and independent of $\KK$.

An ideal $I \subset S$ is homogeneous (with respect to the $A$-grading) if $I$ is generated by homogeneous elements,
that is polynomials, whose all monomials are of the same degree.
Whenever $A$ and the degree map are clear, we simply say ``homogeneous''.

Fix an $A$-grading of $S$. We consider a \emph{flat family of homogeneous ideals} in
  $S= \KK[\fromto{\alpha_1}{\alpha_m}]$.
That is, let $U$ be a $\KK$-scheme, and consider the free sheaf $S_U:=\ccO_U \otimes_{\KK} S$ of $A$-graded algebras. Let $\ccI \subset S_U$ be a homogeneous ideal sheaf flat over $U$. For each point $u\in U$ we consider the fibre ideal $\ccI_u\subset \kappa(u)[\fromto{\alpha_1}{\alpha_n}]= \kappa(u) \otimes_{\KK} S$.
Let $J\subset S$ be a fixed homogeneous ideal, and for each $u$ let $J_u:= \kappa(u) \otimes_{\KK} J \subset \kappa(u) \otimes_{\KK} S$ be the extension of $J$ to the coefficients in $\kappa(u)$. We say that an ideal $I\subset \kappa(u) \otimes_{\KK} S$ is saturated with respect to $J$ if $(I: J_u)= I$.

Under some minor assumptions the subset  of those $u\in U$ that $\ccI_u$ is saturated with respect to $J$ is Zariski open in $U$,
as explained in the following theorem.
\begin{thm}\label{thm_saturation_is_open_property}
   Suppose $S$ is a graded polynomial ring and
      $\ccI$ is a flat family of homogeneous ideals in $S$
      parametrised by a locally Noetherian $\KK$-scheme $U$.
   Let $J$ be a fixed homogeneous ideal in $S$.
   Then the set
        \[
            \left\{ u\in U \mid  \ccI_u \text{ is saturated with respect to $J$} \right\}
        \]
        is an open subset of $U$.
\end{thm}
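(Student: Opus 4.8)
The plan is to express saturation with respect to $J$ as a family of open conditions indexed by the multidegrees, and then to reduce to finitely many of them. Since the assertion is local on $U$ and $U$ is locally Noetherian, we may assume $U=\Spec R$ with $R$ Noetherian; restricting $\ccI$ to an open subscheme preserves flatness. Write $J=(\fromto{g_1}{g_s})$ with $g_j$ homogeneous of multidegree $E_j$, put $\mathcal N:=S_U/\ccI$ (flat over $U$, so by positivity of the grading each graded piece $\mathcal N_D$ is locally free of finite rank over $\ccO_U$), and for every multidegree $D$ set
\[
  \mu_D\colon \mathcal N_D\longrightarrow \bigoplus_{j=1}^{s}\mathcal N_{D+E_j},\qquad \bar f\longmapsto\bigl(\overline{g_j f}\bigr)_{j},
\]
and $\mu:=\bigoplus_D\mu_D\colon \mathcal N\to\bigoplus_{j}\mathcal N(E_j)$, a morphism of finitely generated graded $S_U$-modules with $\ker\mu=(\ccI:J S_U)/\ccI=:\mathcal Q$. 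For $u\in U$ the induced map $\mu_{D,u}$ is the analogous map attached to the fibre ideal $\ccI_u$, and an element of $(\mathcal N_u)_D$ lies in $(\ccI_u:J_u)/\ccI_u$ precisely when it is killed by $\mu_{D,u}$; since moreover $(I:J)=I$ forces $(I:J^n)=I$ for all $n$, the single colon already controls the full saturation. Hence $\ccI_u$ is saturated with respect to $J$ if and only if $\mu_{D,u}$ is injective for every multidegree $D$.

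For a fixed $D$ the rank of a homomorphism of finite locally free $\ccO_U$-modules is lower semicontinuous, so the locus where $\mu_{D,u}$ has rank equal to the locally constant rank of its source --- that is, is injective --- is open in $U$. It therefore suffices to exhibit a \emph{finite} set $\Sigma$ of multidegrees such that, for every $u\in U$, $\ccI_u$ is saturated with respect to $J$ if and only if $\mu_{D,u}$ is injective for all $D\in\Sigma$: the saturated locus is then the finite intersection $\bigcap_{D\in\Sigma}\set{u\in U\mid\mu_{D,u}\text{ injective}}$, hence open, and in the general case one glues over an affine open cover of $U$.

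To produce $\Sigma$ I would argue by Noetherian induction on the closed subsets of $\Spec R$. Since $S_U=R[\fromto{\alpha_1}{\alpha_m}]$ is Noetherian, $\mathcal Q=\ker\mu$ is finitely generated, so it is generated in some finite set $\Sigma_0$ of multidegrees. By generic flatness there is a dense open $W\subseteq\Spec R$ over which both $\operatorname{im}\mu$ and $\operatorname{coker}\mu$ are $R$-flat; for $u\in W$ the two short exact sequences $0\to\operatorname{im}\mu\to\bigoplus_j\mathcal N(E_j)\to\operatorname{coker}\mu\to0$ and $0\to\mathcal Q\to\mathcal N\to\operatorname{im}\mu\to0$ stay exact after $\otimes_R\kappa(u)$; the first identifies $(\operatorname{im}\mu)_u$ with a submodule of $\bigoplus_j\mathcal N_u(E_j)$ and the second then gives $\mathcal Q_u=\ker\mu_u$, a module generated in the multidegrees of $\Sigma_0$, so over $W$ saturation of $\ccI_u$ is equivalent to injectivity of $\mu_{D,u}$ for $D\in\Sigma_0$. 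On the closed complement $Z$ of $W$, taken with its reduced structure, tensoring $0\to\ccI\to S_U\to\mathcal N\to0$ with $\ccO_Z$ keeps it exact because $\mathcal N$ is $R$-flat, so $\ccI|_Z:=\ccI\otimes_R\ccO_Z$ is again a flat family of homogeneous ideals over the Noetherian scheme $Z$, with the same fibre ideals over the points of $Z$; since $Z$ is a proper closed subset of $\Spec R$, the inductive hypothesis supplies a finite $\Sigma_Z$ valid over $Z$, and then $\Sigma:=\Sigma_0\cup\Sigma_Z$ is valid over all of $\Spec R$ (the base case $\Spec R=\emptyset$ being vacuous).

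The main obstacle is exactly the step this induction is designed to sidestep: the colon operation $\ccI\mapsto(\ccI:J S_U)$, equivalently the formation of $\ker\mu$, commutes with base change only to the generic fibres, so a uniform bound on the relevant multidegrees cannot simply be read off the single module $\mathcal Q$. What rescues the argument is the combination of generic flatness, which handles the generic fibres, with the elementary but essential point that flatness of the quotient $\mathcal N=S_U/\ccI$ lets flatness of the family survive restriction to a closed subscheme, so that the inductive hypothesis applies to $Z$ without change.
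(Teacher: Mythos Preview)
The paper does not prove this result itself; it simply refers to \cite[Thm~2.3]{jelisiejew_mandziuk_limits_of_saturated_ideals} for the proof. Your argument is correct in substance and follows a natural strategy: rewrite ``$\ccI_u$ is $J$-saturated'' as injectivity of the multiplication-by-generators map $\mu$ on the flat quotient $\mathcal N=S_U/\ccI$, use semicontinuity of rank for maps of finite locally free sheaves to get openness in each fixed multidegree, and then invoke Noetherian induction together with generic flatness to reduce to finitely many multidegrees.

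Two small points deserve attention. First, you appeal to positivity of the grading to make each $\mathcal N_D$ locally free of finite rank, but the theorem as stated does not assume positivity; all of the paper's applications (Cox rings of projective toric varieties) do satisfy it, and the corollary immediately following adds positivity explicitly, so this is harmless in context but is an extra hypothesis you have silently imposed. Second, generic flatness in the form you use requires a reduced base, so before running the induction you should replace $R$ by $R_{\mathrm{red}}$; this is legitimate because the assertion is purely set-theoretic and flatness of $\mathcal N$ survives the base change $R\to R_{\mathrm{red}}$. You already take $Z$ with its reduced structure in the inductive step, so this is only a matter of setting things up consistently from the start.
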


For the proof see \cite[Thm~2.3]{jelisiejew_mandziuk_limits_of_saturated_ideals}.
The most relevant case of theorem is when $U$ is the multigraded Hilbert scheme.

\begin{cor}
  Suppose that $S$ is a positively graded polynomial ring and that $J \subset S$ is a homogeneous ideal.
  Let $\Hilb^h_S$ be the multigraded Hilbert scheme of $S$ for some Hilbert function $h\colon A\to \NN$.
  Consider the subset $\Hilb_S^{h, \sat J} \subset \Hilb^h_S$
    consisting of the points representing ideals saturated with respect to $J$.
  Then $\Hilb_S^{h, \sat J}$ is Zariski open.
\end{cor}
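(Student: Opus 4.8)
The plan is to deduce this corollary directly from Theorem~\ref{thm_saturation_is_open_property} by taking $U$ to be the multigraded Hilbert scheme itself, equipped with its universal family. First I would recall that for a positively graded polynomial ring $S$ and a Hilbert function $h\colon A\to\NN$, the multigraded Hilbert scheme $\Hilb^h_S$ exists as a quasi-projective $\KK$-scheme (by Haiman--Sturmfels), and in particular it is locally Noetherian; moreover it carries a universal flat family $\ccI^{\mathrm{univ}}\subset S_{\Hilb^h_S}$ of homogeneous ideals with Hilbert function $h$. This universal family is exactly the kind of object to which Theorem~\ref{thm_saturation_is_open_property} applies: $S$ is a graded polynomial ring, $\ccI^{\mathrm{univ}}$ is flat over the locally Noetherian base $U=\Hilb^h_S$, and $J$ is a fixed homogeneous ideal.

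Next I would identify the set appearing in the corollary with the set appearing in the theorem. By the defining universal property of $\Hilb^h_S$, a point $u\in U$ with residue field $\kappa(u)$ corresponds to the fibre ideal $\ccI^{\mathrm{univ}}_u\subset\kappa(u)\otimes_{\KK}S$, and this is precisely the ideal ``represented'' by $u$. Hence the subset $\Hilb_S^{h,\sat J}$ of points representing ideals saturated with respect to $J$ is literally $\{u\in U\mid \ccI^{\mathrm{univ}}_u\text{ is saturated with respect to }J\}$, which Theorem~\ref{thm_saturation_is_open_property} asserts is open. This finishes the argument.

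The only point requiring a little care, and hence the main (mild) obstacle, is matching the notion of ``saturated with respect to $J$'' used in the theorem --- the condition $(\ccI_u : J_u)=\ccI_u$ with $J_u=\kappa(u)\otimes_\KK J$ --- with whatever notion the corollary intends by ``saturated with respect to $J$''. Since the corollary uses the same terminology introduced just before the theorem, this is really just a matter of noting that the universal fibre at $u$ extends scalars compatibly, i.e.\ base change along $\KK\to\kappa(u)$ commutes with the formation of both $J_u$ and the ideal quotient in the relevant degrees (which is automatic because all graded pieces are finite-dimensional by positivity of the grading). Once this bookkeeping is in place, the corollary is immediate, so I would present it as a one-paragraph consequence rather than a separate development.

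\begin{proof}
   Apply Theorem~\ref{thm_saturation_is_open_property} with $U=\Hilb^h_S$, the multigraded
   Hilbert scheme, which exists and is locally Noetherian, and with $\ccI$ the universal
   family of homogeneous ideals over $U$, which is flat by construction.
   The fibre $\ccI_u$ over a point $u\in U$ is exactly the ideal represented by $u$,
   so the set $\{u\in U\mid \ccI_u\text{ is saturated with respect to }J\}$ coincides
   with $\Hilb_S^{h,\sat J}$. By the theorem this set is open in $U$.
\end{proof}
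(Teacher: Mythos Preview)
Your proposal is correct and matches the paper's approach: the paper states the corollary immediately after Theorem~\ref{thm_saturation_is_open_property} with the remark that the most relevant case of the theorem is precisely $U=\Hilb^h_S$, and gives no separate proof. Your write-up simply spells out this application, including the identification of the universal fibre with the ideal represented by a point, which is exactly what the paper leaves implicit.
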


We now proceed to show that a general fibre of a flat family of schemes determines the whole family (Lemma~\ref{lem_flat_family_determined_by_generic_fibre}).

\begin{lemma}
\label{lem_saturated_is_preserved_by_localisation}
   Suppose $B$ is a comutative ring, $M\subset B$ is a multiplicative system, $I \subset B$ is an ideal,
   and $J \subset B$ is a finitely generated ideal.
   If $(I:J)=I$ then $(M^{-1} I : M^{-1} J) = M^{-1} I$ in the localisation $M^{-1}B$.
\end{lemma}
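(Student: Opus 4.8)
The plan is to unwind the condition $(I:J)=I$ element-by-element and push everything through the localisation map. First I would recall what the hypothesis says: for $b\in B$, if $bJ\subset I$ then $b\in I$. I want to prove the analogous implication in $M^{-1}B$: if $\tfrac{b}{s}\cdot M^{-1}J\subset M^{-1}I$, then $\tfrac{b}{s}\in M^{-1}I$. Since $\tfrac{b}{s}\in M^{-1}I$ iff $\tfrac{b}{1}\in M^{-1}I$ (multiply by the unit $\tfrac{s}{1}$), and similarly for the containment, it suffices to treat elements of the form $\tfrac{b}{1}$. So the real statement to prove is: if $\tfrac{b}{1}\cdot M^{-1}J\subset M^{-1}I$ then $\tfrac{b}{1}\in M^{-1}I$.

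Next I would use finite generation of $J$, say $J=(g_1,\dots,g_n)$. The hypothesis $\tfrac{b}{1}\cdot\tfrac{g_j}{1}\in M^{-1}I$ means that for each $j$ there is some $s_j\in M$ with $s_j\,b\,g_j\in I$ (using the standard fact that $\tfrac{x}{1}=0$ in $M^{-1}(B/I)$, i.e.\ $\tfrac{x}{1}\in M^{-1}I$, iff $t x\in I$ for some $t\in M$). Put $s:=s_1s_2\cdots s_n\in M$. Then $s\,b\,g_j\in I$ for every $j$, hence $(sb)J\subset I$. Now apply the hypothesis $(I:J)=I$ to the element $sb\in B$: we get $sb\in I$, and therefore $\tfrac{b}{1}=\tfrac{sb}{s}\in M^{-1}I$. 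This gives $(M^{-1}I:M^{-1}J)\subset M^{-1}I$; the reverse inclusion is automatic, so we conclude equality.

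There is essentially no main obstacle here — the only point requiring a little care is the bookkeeping about when an element of $M^{-1}B$ lies in $M^{-1}I$, namely the clearing-of-denominators characterisation, and the fact that finitely many denominators can be cleared simultaneously by taking a product. Both are completely standard properties of localisation; the finite generation of $J$ is exactly what is needed so that a \emph{single} $s\in M$ works for all generators at once. It is worth noting in passing that $M^{-1}J$ is generated by $\tfrac{g_1}{1},\dots,\tfrac{g_n}{1}$, so checking the colon condition on these generators suffices.
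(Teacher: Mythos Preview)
Your proof is correct and follows essentially the same argument as the paper: use finite generation of $J$ to clear denominators with a single product $s=s_1\cdots s_n\in M$, deduce $(sb)J\subset I$, and apply $(I:J)=I$. The only cosmetic difference is that you first reduce to elements of the form $\tfrac{b}{1}$, whereas the paper works directly with an arbitrary $f\in(M^{-1}I:M^{-1}J)$; the substance is identical.
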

\begin{prf}
   We always have $(M^{-1} I : M^{-1} J) \supset M^{-1} I$.
   To show the opposite inclusion,
   suppose that $J= (\fromto{g_1}{g_k})$ and pick any $f\in (M^{-1} I : M^{-1} J)$.
   Thus for each $g_i$ there exists $h_i \in M$ such that $ f g_i h_i \in I$.
   Let $h = h_1\cdot h_2 \dotsm h_k$ so that also
   $ f g_i h \in I$ for each index $i$.
   Therefore $f h\in (I : J) = I$.
   Since $h\in M$ we have that $f\in M^{-1} I$.
\end{prf}

\begin{lemma}
   \label{lem_flat_family_determined_by_generic_fibre}
   Let  $\phi\colon \ccZ\to U$ be a flat morphism of schemes, where $U$ is integral and Noetherian.
   Let $\eta \in U$ be the generic point and $\ccZ_{\eta}$  the generic fibre of $\phi$.
   Then the scheme theoretic closure $\overline{\ccZ_{\eta}}$ is equal to $\ccZ$.
\end{lemma}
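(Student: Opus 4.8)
The plan is to reduce to the affine case and there translate the statement into commutative algebra about flatness over an integral domain. Since the conclusion $\overline{\ccZ_\eta}=\ccZ$ is local on $U$ and on $\ccZ$, I would cover $U$ by affine opens $\Spec R$ with $R$ an integral domain (using that $U$ is integral) and cover $\phi^{-1}(\Spec R)$ by affine opens $\Spec B$; here $B$ is an $R$-algebra which is flat as an $R$-module. Write $K=\Frac(R)=\ccO_{U,\eta}$, so the generic fibre over this chart is $\Spec(B\otimes_R K)=\Spec(M^{-1}B)$, where $M=R\setminus\{0\}$ is the multiplicative system. The scheme-theoretic closure of $\ccZ_\eta$ in $\Spec B$ is cut out by the kernel $N$ of the localisation map $B\to M^{-1}B$; concretely $N=\{b\in B\mid mb=0 \text{ for some }m\in M\}$ is the $M$-torsion submodule of $B$. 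So the whole statement comes down to showing that $N=0$, i.e. that $B$ is $M$-torsion-free as an $R$-module.

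This is precisely where flatness enters, and it is the only substantive point. Because $R$ is a domain, every nonzero $m\in R$ gives an injection $m\colon R\hookrightarrow R$; tensoring with the flat $R$-module $B$ keeps it injective, so multiplication by $m$ is injective on $B$. Hence no nonzero element of $B$ is killed by any nonzero element of $R$, i.e. $N=0$ and $B\to M^{-1}B$ is injective. Therefore the scheme-theoretic closure of the generic fibre in $\Spec B$ is all of $\Spec B$. Gluing these affine statements over the chosen cover of $U$ and of $\ccZ$ yields $\overline{\ccZ_\eta}=\ccZ$. (Noetherianity of $U$ is only needed to make ``scheme-theoretic closure'' well-behaved and to pass between the global and local pictures; flatness is what does the real work, replacing the more classical hypothesis that $\ccZ$ has no embedded or vertical components.)

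The main obstacle, such as it is, is purely bookkeeping: one must check that forming the scheme-theoretic closure of $\ccZ_\eta$ commutes with restriction to the affine opens $\Spec B$, so that the local computation of the closure as $V(N)$ is legitimate and the local results glue to the global claim. This is standard — scheme-theoretic image of a quasi-compact quasi-separated morphism is compatible with flat (in particular open) base change, and here $\ccZ_\eta\to\ccZ$ is obtained by base change along $\Spec K\to U$ — but it is the only place where one has to be slightly careful rather than simply quoting flatness over a domain.
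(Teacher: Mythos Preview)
Your proposal is correct and follows essentially the same route as the paper: reduce to the affine case and use that a flat module over a domain is torsion-free. The paper carries out the reduction to affine $U$ and affine $\ccZ$ in two explicit steps with commutative diagrams, and phrases the algebraic core as the condition $\phi^*(g)\cdot f=0\Rightarrow(f=0\text{ or }g=0)$, proved by tensoring the injection $R\hookrightarrow R[g^{-1}]$ with the flat module $B$; your version, tensoring the injection $m\colon R\hookrightarrow R$ directly, is the same idea expressed slightly more economically.
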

Note that the conclusion of the lemma can fail if $U$ is not integral or if $\phi$ is not flat.
\begin{prf}
   We always have $\overline{\ccZ_{\eta}} \subset \ccZ$.
   Also if $\ccZ=\emptyset$,  the claim is trivial. For the rest of the proof we suppose that $\ccZ$ is nonempty.

   To show the opposite inclusion, as in a typical Hartshorne's style exercise,
   we first reduce to an affine case, and then consider the problem algebraically.

\paragraph{Reduction to affine $U$.}
   Assume that the inclusion
   $\ccZ \subset \overline{\ccZ_{\eta}}$ holds whenever $U$ is in addition affine, and now let us deduce the case with arbitrary $U$.

   Suppose $U'\subset U$ is any nonempty affine open subset.
   In particular, $U'$ is dense and $\eta \in U'$.
   Moreover, the restricted map $\phi'\colon \ccZ|_{U'} \to U'$
   is flat by \cite[Prop.~III.9.2(b)]{hartshorne}.
   By our assumption, we have
     $\ccZ|_{U'}\subset \overline{\left(\ccZ|_{U'}\right)_{\eta}}$
   and we have the commutative diagram:
   \[
     \begin{tikzcd}
        \ccZ_{\eta} \arrow[r] \arrow[d]
          & \ccZ_{U'} \arrow[r, hook, "\text{open}"] \arrow[d]
          & \ccZ \arrow[d]\\
        \set{\eta} \arrow[r]
          & U'  \arrow[r, hook, "\text{open}"]
          & U.
      \end{tikzcd}
   \]
   Therefore
   $\overline{\left(\ccZ|_{U'}\right)_{\eta}} \subset \overline{\ccZ_{\eta}}$ and thus $\ccZ|_{U'}\subset \overline{\ccZ_{\eta}}$.
   Since $\ccZ|_{U'}$ cover $\ccZ$ when we vary $U'$ among open affine subsets of $U$, we obtain $\ccZ \subset \overline{\ccZ_{\eta}}$ as claimed.

\paragraph{Reduction to affine $\ccZ$.}
   Now assume that the inclusion
   $\ccZ \subset \overline{\ccZ_{\eta}}$ holds whenever $\ccZ$ is in addition affine, and let us deduce the case of arbitrary $\ccZ$.

   Let $\ccZ'\subset \ccZ$ be an affine open subset with the induced scheme structure.
   Then the composition $\phi' \colon \ccZ'\to U$ is also flat by \cite[Prop.~III.9.2(a) and (c)]{hartshorne}.
   The generic fibre of $\phi'$ is $(\ccZ')_{\eta}$,
   and by the affine case $\ccZ'\subset \overline{(\ccZ')_{\eta}}$.
   Moreover, $\overline{(\ccZ')_{\eta}}\subset \overline{\ccZ_{\eta}}$ by the commutativity of this diagram:
   \[
     \begin{tikzcd}
        (\ccZ')_{\eta}
         \arrow[rr] \arrow[ddr] \arrow[dr]
          & &\ccZ'
          \arrow[dr, hook, "\text{open}"] \arrow[ddr]
\\
      & \ccZ_{\eta}
      \arrow[rr]\arrow[d]
      &&\ccZ
     \arrow[d]
      \\
       & \set{\eta}
       \arrow[rr]
          && U.
      \end{tikzcd}
   \]
  Therefore,
  $\ccZ'\subset \overline{\ccZ_{\eta}}$
  and varying $\ccZ'$ we get an open covering of $\ccZ$ concluding that $\ccZ \subset \overline{\ccZ_{\eta}}$ as claimed.

\paragraph{Translating to algebraic condition.}
   Suppose $\ccZ$ and $U$ are affine.
   We claim that
   that $\overline{\ccZ_{\eta}} = \ccZ$ if and only if:
   \begin{equation}\label{equ_equality_for_generic_fibre_algebraically}
    \forall_{f\in \ccO_{\ccZ}(\ccZ),\   g\in \ccO_{U}(U)} \quad \bigl( \phi^*(g) \cdot f =0 \Rightarrow (f=0 \text{ or } g=0)\bigr)
   \end{equation}
   Indeed, we always have $\overline{\ccZ_{\eta}} \subset \ccZ$,
   while the other inclusion
   is equivalent to vanishing of the ideal of $\overline{\ccZ_{\eta}}$ in $\ccO_{\ccZ}(\ccZ)$.
   The ideal consists of those $f\in \ccO_{\ccZ}(\ccZ)$ that $\phi^*g\cdot f = 0$ for some non-zero $g\in \ccO_U(U)$.

\paragraph{Proving the algebraic conditon.}
Since the morphism $Z \to U$ is flat   and both $Z$ and $U$ are affine,
  $\ccO_{\ccZ}(\ccZ)$ is a flat $\ccO_{U}(U)$-module.
  Suppose that $g\in \ccO_{U}(U)$ is non-zero,   then the localisation $\ccO_{U}(U) \to \ccO_{U}(U)[g^{-1}]$
  is injective since $U$ is integral. Then, by definition of a flat module, also the map
\[
   \xi\colon \ccO_{U}(U)\otimes_{\ccO_{U}(U)} \ccO_{\ccZ}(\ccZ) \to \ccO_{U}(U)[g^{-1}]\otimes_{\ccO_{U}(U)} \ccO_{\ccZ}(\ccZ)
\]
is injective.
Now we prove that \eqref{equ_equality_for_generic_fibre_algebraically} holds.
Indeed, pick any $f\in \ccO_{\ccZ}(\ccZ)$ and $g\in \ccO_{U}(U)$ such that $\phi^*(g) \cdot f=0$.
If $g=0$, then we are done.
If $g\neq 0$, then consider
\[
  \xi(f) = g^{-1} \cdot \xi(\phi^*(g) \cdot f) = 0.
\]
Since $\xi$ is injective, we must have $f=0$, so the claim of \eqref{equ_equality_for_generic_fibre_algebraically} follows.
This also concludes the proof of the lemma.
\end{prf}

\begin{lemma}
\label{lem_adding_one_general_point_for_single_L}
Suppose $X$ is a variety over $\kk$ and $Z\subset X$ is a closed subscheme.
Let $L$ be a line bundle on $X$ and $p\in X$ be a general point.
Then one of the following conditions hold:
\begin{itemize}
 \item  $H^0(X, \ccI_Z\otimes L) = 0$ or
 \item  $H^0(X, \ccI_{Z\cup \set{p}} \otimes L)$ is of codimension $1$ in $H^0(X, \ccI_Z\otimes L)$.
\end{itemize}
\end{lemma}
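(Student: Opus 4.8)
The plan is to think of adjoining a general point as imposing one linear condition on sections, and to use the semicontinuity of $h^0$ together with the genericity of $p$ to decide whether that condition is independent. Concretely, consider the exact sequence of sheaves
\[
  0 \to \ccI_{Z\cup\set{p}} \to \ccI_Z \to \ccI_Z|_{\set p}\otimes\ccO_{\set p} \to 0,
\]
where the last term is the structure sheaf of the reduced point $p$ (viewing $\ccI_Z$ locally near $p$ as free of rank $1$, since $p$ is general and hence disjoint from $Z$). Twisting by $L$ and taking cohomology gives
\[
  0 \to H^0(X,\ccI_{Z\cup\set p}\otimes L) \to H^0(X,\ccI_Z\otimes L) \xrightarrow{\ \operatorname{ev}_p\ } \kk,
\]
so $H^0(X,\ccI_{Z\cup\set p}\otimes L)$ is the kernel of the evaluation-at-$p$ map $\operatorname{ev}_p$, hence of codimension $0$ or $1$ in $H^0(X,\ccI_Z\otimes L)$. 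Codimension $1$ occurs exactly when $\operatorname{ev}_p$ is nonzero, i.e. when some section of $\ccI_Z\otimes L$ does not vanish at $p$; codimension $0$ means every section of $\ccI_Z\otimes L$ vanishes at $p$.

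So the dichotomy to establish is: either $H^0(X,\ccI_Z\otimes L)=0$, or a general point $p$ is not a base point of the linear system $|\ccI_Z\otimes L|$ (equivalently, $\operatorname{ev}_p\neq 0$). This is where genericity enters. Suppose $H^0(X,\ccI_Z\otimes L)\neq 0$ and pick a nonzero section $s$. The locus $\set{s=0}\subset X$ is a proper closed subscheme of $X$ (it is a divisor, since $s\neq 0$ and $X$ is a variety, hence irreducible). A general point $p\in X$ lies outside this proper closed set, and for such $p$ we have $s(p)\neq 0$, so $\operatorname{ev}_p\neq 0$ and we are in the codimension-$1$ case. I should phrase "general point" so that the good locus is the complement of the single proper closed set $\set{s=0}$ for one fixed choice of $s$; any point avoiding it works.

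The only mild subtlety — the step I expect to require a word of care rather than being an obstacle — is the identification of $\ccI_Z|_{\set p}\otimes\ccO_{\set p}$ with $\kk$ (i.e. with $\ccO_{X,p}/\mathfrak m_p$) and the exactness of the short sequence above. This uses that a general point $p$ is a smooth point of $X$ not lying on $Z$, so that near $p$ the ideal sheaf $\ccI_Z$ is the whole structure sheaf $\ccO_X$, and $\ccI_{Z\cup\set p}$ agrees with $\ccI_Z\cdot\mathfrak m_p=\mathfrak m_p$ there; then the sequence is just the skyscraper sequence $0\to\mathfrak m_p\to\ccO_X\to\kk(p)\to 0$ twisted by the locally free sheaf $\ccI_Z\otimes L$, which remains exact. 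Thus the conclusion follows, completing the proof; in fact this records that for a general $p$, adjoining $p$ to $Z$ drops $h^0(\ccI_Z\otimes L)$ by exactly $1$ whenever that group is nonzero, which is the statement.
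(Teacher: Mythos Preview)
Your proof is correct and follows essentially the same approach as the paper: both establish that $H^0(X,\ccI_{Z\cup\{p\}}\otimes L)$ has codimension at most $1$ in $H^0(X,\ccI_Z\otimes L)$ (you via the skyscraper exact sequence and evaluation map, the paper via the identity $H^0(\ccI_{Z\cup\{p\}}\otimes L)=H^0(\ccI_Z\otimes L)\cap H^0(\ccI_{\{p\}}\otimes L)$), and then both argue that codimension $0$ forces every section to vanish at a general point, hence identically. Your phrasing via a fixed nonzero section $s$ and its proper closed vanishing locus is simply the contrapositive of the paper's ``vanishes at a general point $\Rightarrow$ vanishes everywhere''; the content is the same.
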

\begin{prf}
   We have $H^0(X, \ccI_{Z\cup \set{p}} \otimes L) =
   H^0(X, \ccI_{Z} \otimes L) \cap H^0(X, \ccI_{\set{p}} \otimes L)$.
   Moreover, $H^0(X, \ccI_{\set{p}} \otimes L) \subset H^0(X, L)$ is of codimension at most $1$.
   Thus also
   $H^0(X, \ccI_{Z\cup \set{p}} \otimes L)$ is of codimension at most $1$ in $H^0(X, \ccI_Z\otimes L)$.
   Suppose that the latter codimension is $0$,
   that is
   $H^0(X, \ccI_{Z\cup \set{p}} \otimes L)=H^0(X, \ccI_Z\otimes L)$.
   Then any section of $H^0(X, \ccI_Z\otimes L)$ vanishes at a general point of $X$, thus vanishes everywhere on $X$, that is this section is $0$, so that $H^0(X, \ccI_Z\otimes L)=0$.
   Otherwise the codimension is $1$ proving the claim of the lemma.
\end{prf}

\subsection{Smooth projective toric varieties}
\label{sec_toric_varieties}

Suppose $X$ is a smooth toric variety over $\kk$
   and let $S=S[X] = \bigoplus_{D\in \Pic(X)} H^0(X,D)$
   be its Cox ring graded by $\Pic(X)$.
   Throughout this section we assume in addition that each $D\in \Pic(X)$ the $\kk$-linear space $H^0(X,D)$
   is finitely dimensional, which is true for instance if $X$ is projective (our main interest), or at least complete,
   but also in more general situations.
   We remark that for many  statements below smoothness of $X$ is not essential, but makes both
    statements and the proofs simpler.

    We study families of subschemes of $X$. Our main interest is in flat families over integral base,
    but some intermediate results are valid in more general setting.
    Moreover, we  frequently restrict to the case of affine base.
Thus let $U$ be a $\kk$-scheme, and suppose $\ccZ\subset U \times_{\kk} X$ is a closed subscheme.
In this situation, we say that $\ccZ$ is a (not necessarily flat) family of subschemes of $X$ parametrised by $U$.
% If $U$ is affine we will suppose that $U= \Spec A$ for a $\kk$-algebra $A$.
Denote by $\pi_U\colon U\times X \to U$ and
$\pi_X\colon U\times X \to X$ the natural projections.
There are two related ideal sheaves we consider:
\begin{itemize}
 \item  the ideal sheaf of ${\ccZ}$ embedded in the product: $\ccI_{\ccZ}\subset \ccO_{U \times X}$, or in the affine case $U=\Spec A$, $\ccI_{\ccZ} \subset A\otimes_{\kk} \ccO_{X} $ and
 \item  the homogeneous ideal of $\ccZ$: $I_{\ccZ} \subset \ccO_U \otimes_{\kk} S_X$, or in the affine case, $I_{\ccZ} \subset A\otimes_{\kk} S_X$.
\end{itemize}

\begin{rem}
    \label{rem_relations_ideal_sheal_and_homogeneous_ideal}
    \begin{enumerate}
     \item
      To distinguish the two objects above,  we skip the word ``sheaf'' in the description of $I_{\ccZ}$,
        despite it is really a sheaf of ideals,  and only in the affine case it reduces to an honest ideal in a ring.
     \item
        \label{item_relation_between_ideal_sheal_and_homogeneous_ideal}
        The relation between $\ccI_{\ccZ}$ and $I_{\ccZ}$    is given by the following formula
    \[
       \forall_{D\in\Pic X}  \ (I_{\ccZ})_D
       = \pi_{U*}(\ccI_{\ccZ}\otimes_{\ccO_{U \times X}} \pi_X^*D).
    \]
    \item
      Suppose $\phi \colon  U'\to U$ is a morphism of $\kk$-scheme and define by $\ccZ'\subset U'\times_{\kk} X$
      the pullback family.
      While the ideal sheaf $\ccI_{\ccZ}$ behaves functorially, that is $(\phi\times \id_X)^* \ccI_{\ccZ} = \ccI_{\ccZ'}$,
      the homogeneous ideal is not functorial.
       It is always true that $\phi^*I_{\ccZ} \subset I_{\ccZ'}$,
       but $I_{\ccZ'}$ is the saturation of $\phi^*I_{\ccZ}$ with respect to the irrelevant ideal $\Irrel{X}$ discussed below.
    \end{enumerate}
\end{rem}

An important special case is when $U=\Spec \KK$ for a field extension $\kk\subset \KK$,  and
   $Z\subset X\times_{\kk} \KK$ is a closed subscheme.
Then $I_Z \subset S[X]\otimes_{\kk}\KK$ is the homogeneous ideal of $Z$, that is the ideal generated by sections of line bundles $D$ on $X\times_{\kk} \KK$ that vanish on $Z$.
The \emph{Hilbert function} of $Z$ is $h_Z\colon \Pic(X) \to \NN$ defined by
\[
h_Z(D) = \dim_{\KK} \left(\left(S[X]\otimes_{\kk}\KK\right)/I_{Z}\right)_D.
\]
Thus if $\ccU$ is any $\kk$-scheme
and  $\ccZ \subset U \times_{\kk} X$ is a family of subschemes of $X$, then for each point $u \in U$ we obtain a function $h_{\ccZ_u} \colon \Pic(X)\to \NN$, namely the Hilbert function of the fibre $\ccZ_u$ (here $\KK = \kappa (u)$).

Define $\Irrel{X}\subset S$ to be the irrelevant ideal,
   that is the radical ideal of $S_L$ for any very ample line bundle $L$ on $X$.
Throughout the rest of the article we fix $\Irrel{X}$ as the denominator of the saturation,
  that is, we whenever we say an ideal (or ideal sheaf) is saturated, we mean saturated with respect to $\Irrel{X} \subset S[X]$
  or $\Irrel{X} \cdot \KK \subset \KK \otimes_{\kk} S[X]$
  or
  $\Irrel{X} \cdot \ccO_U \subset \ccO_U \otimes_{\kk} S[X]$.
Note that for any subscheme $Z\subset \Spec \KK \times_{\kk}X$ its ideal $I_Z$ is saturated, and similarly, $I_{\ccZ}$ is saturated for any family
$\ccZ$ of subschemes of $X$ parametrised by $U$.

Conversely, for any homogeneous ideal  $I\subset \ccO_U \otimes_{\kk} S_X$ we can define the underlying closed subscheme $\ccZ(I)$ of $U\times X$.
 Formally, we define $\ccZ(I)$ by constructing its ideal sheaf $\ccI\subset \ccO_{U\times X}$.
For any open affine subset $X^{\circ}\subset X$, the complement $X\setminus
X^{\circ}$ is a zero locus of some $\Theta\in S[X]_L$ for some $L\in \Pic(X)$, and $X^{\circ}= \Spec(S[X][\Theta^{-1}]_0)$.
If $U^{\circ}\subset U$ is any open subset
then we set
\[
  \ccI(U^{\circ}\times X^{\circ}):=I(U^{\circ})[\Theta^{-1}]_0\subset \ccO_{U}(U^{\circ})\otimes_{\kk} S[X][\Theta^{-1}]_0 = \ccO_{U\times X}(U^{\circ}\times X^{\circ}).
\]
It is a formal check that there exists a unique coherent sheaf of ideals $\ccI\subset \ccO_{U\times X}$ which takes the above values on any product
open subsets $U^{\circ}\times X^{\circ}$ (with $X^{\circ}$ affine).
We define $\ccZ(I)$ to be the closed subscheme defined by such $\ccI$.
Moreover, the following properties are also straightforward to verify.
\begin{prop}
\label{prop_zero_scheme_of_ideal}
   Suppose $I$ and $\ccZ(I)$ are as above. Denote by $I^{\sat}$ the saturation of $I$ with respect to $\Irrel{X}$. Then:
   \begin{enumerate}
    \item \label{item_zero_scheme_of_ideal_and_its_saturation}
         $\ccZ(I)=\ccZ(I^{\sat})$, and
    \item
    \label{item_ideal_of_zero_scheme}
       $I_{\ccZ(I)} = I^{\sat}$.
    \item
    \label{item_two_ideals_have_the_same_zero_scheme}
       if $L$ is an ample line bundle and $J$ is another homogeneous ideal in $\ccO_U \otimes_{\kk} S_X$, then
    $\ccZ(I) = \ccZ(J)$ if and only if there exists $t_0\geqslant 0$ such that for all $t\geqslant t_0$ we have $I_{L^{t}} = J_{L^{t}}$.
   \end{enumerate}
\end{prop}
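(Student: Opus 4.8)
The plan is to verify all three assertions on the standard affine charts of the toric variety $X$. Recall that for each maximal cone $\sigma$ of the fan the monomial $\Theta_\sigma=\prod_{\rho\notin\sigma}\alpha_\rho\in S[X]$ is one of the generators of $\Irrel{X}$, that $X_\sigma:=\Spec\bigl(S[X][\Theta_\sigma^{-1}]_0\bigr)$ is the associated affine open, that the $X_\sigma$ cover $X$, and that --- directly from the construction of $\ccZ(I)$ recalled above --- the ideal sheaf of $\ccZ(I)$ over $U^\circ\times X_\sigma$ equals $I(U^\circ)[\Theta_\sigma^{-1}]_0$. All three statements are local on $U$, and every step below is unchanged upon replacing $\kk$ by $\ccO_U(U^\circ)$ and $\Irrel{X}$ by $\ccO_U\otimes\Irrel{X}$; so I would argue for $U=\Spec\kk$, the general locally Noetherian case being identical save that Noetherianity is used to make saturations stabilise after finitely many steps.

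For assertion~\ref{item_zero_scheme_of_ideal_and_its_saturation}: any $f\in I^{\sat}$ satisfies $\Irrel{X}^N f\subseteq I$ for some $N$, hence $\Theta_\sigma^N f\in I$ because $\Theta_\sigma\in\Irrel{X}$; so $I[\Theta_\sigma^{-1}]=I^{\sat}[\Theta_\sigma^{-1}]$, and comparing degree-$0$ parts shows $I$ and $I^{\sat}$ define the same ideal sheaf on every $X_\sigma$, whence $\ccZ(I)=\ccZ(I^{\sat})$. For assertion~\ref{item_ideal_of_zero_scheme}, using this and the fact that $I^{\sat}$ is saturated I may assume $I=I^{\sat}$ and must prove $I_{\ccZ(I)}=I$. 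For $I\subseteq I_{\ccZ(I)}$: choosing a trivialising section $g_\sigma$ of $\ccO_X(D)$ over $X_\sigma$ --- a unit of degree $D$ in $S[X][\Theta_\sigma^{-1}]$, since $X$ is smooth --- any $s\in I_D$ gives $s\,g_\sigma^{-1}\in I[\Theta_\sigma^{-1}]$ of degree $0$, i.e.\ $s|_{X_\sigma}\in\ccI_{\ccZ(I)}(X_\sigma)$, so $s$ is a section of $\ccI_{\ccZ(I)}\otimes\ccO_X(D)$, which by Remark~\ref{rem_relations_ideal_sheal_and_homogeneous_ideal}\ref{item_relation_between_ideal_sheal_and_homogeneous_ideal} means $s\in(I_{\ccZ(I)})_D$. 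For the reverse inclusion: if $s\in(I_{\ccZ(I)})_D$ then $s|_{X_\sigma}$ lies in $\ccI_{\ccZ(I)}(X_\sigma)=I[\Theta_\sigma^{-1}]_0$ (after the trivialisation $g_\sigma$), so $\Theta_\sigma^{k_\sigma}s\in I$ for suitable $k_\sigma$; since the $\Theta_\sigma$ generate $\Irrel{X}$, a pigeonhole argument gives $\Irrel{X}^M s\subseteq I$ with $M=\sum_\sigma k_\sigma$, hence $s\in(I:\Irrel{X}^M)\subseteq I^{\sat}=I$.

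For assertion~\ref{item_two_ideals_have_the_same_zero_scheme} I would first deduce from assertions~\ref{item_zero_scheme_of_ideal_and_its_saturation} and~\ref{item_ideal_of_zero_scheme} that $\ccZ(I)=\ccZ(J)\iff I^{\sat}=J^{\sat}$: for "$\Rightarrow$", equality of subschemes means equality of ideal sheaves, so by assertion~\ref{item_ideal_of_zero_scheme} $I^{\sat}=I_{\ccZ(I)}=I_{\ccZ(J)}=J^{\sat}$; for "$\Leftarrow$", apply assertion~\ref{item_zero_scheme_of_ideal_and_its_saturation} twice. It then remains to show
\[
I^{\sat}=J^{\sat}\quad\Longleftrightarrow\quad I_{L^t}=J_{L^t}\ \text{ for all }t\gg0.
\]
The key input is the statement $(\star)$: $I_{L^t}=(I^{\sat})_{L^t}$ for all $t\gg0$, valid for any homogeneous ideal and any ample $L$. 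Granting $(\star)$ for $I$ and $J$, the implication $I^{\sat}=J^{\sat}\Rightarrow I_{L^t}=J_{L^t}$ for $t\gg0$ is immediate. Conversely, suppose $I_{L^t}=J_{L^t}$ for $t\ge t_0$; pick $a$ with $L^a$ very ample, and for $\theta\in S[X]_{L^a}$ cutting out an affine chart $X_\theta$ observe that $\ccI_{\ccZ(I)}(X_\theta)=I[\theta^{-1}]_0=\bigcup_k I_{kL^a}\theta^{-k}$ is an increasing union, hence determined by its tail, hence by the pieces $I_{kL^a}$ with $kL^a\ge t_0$; the same for $J$, so $\ccZ(I)$ and $\ccZ(J)$ have equal ideal sheaves on the cover $\{X_\theta\}$ of $X$ and therefore coincide.

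To prove $(\star)$ I would pass to $\PP^n$. Set $N:=I^{\sat}/I$, a finitely generated $S[X]$-module; since $S[X]$ is Noetherian we have $I^{\sat}=(I:\Irrel{X}^{m_0})$ for some $m_0$, so $\Irrel{X}^{m_0}N=0$. Fix $a$ with $L^a$ very ample and projectively normal, and let $R:=\bigoplus_{n\ge0}S[X]_{nL^a}$, a standard graded finitely generated $\kk$-algebra with $\operatorname{Proj}R\cong X$ via $\ccO_X(L^a)$. For each $i\in\{0,\dots,a-1\}$ the graded $R$-module $N^{(i)}:=\bigoplus_{n\ge0}N_{L^i+nL^a}$ is finitely generated (Veronese truncation of a finitely generated module over a Noetherian graded ring), and its sheafification on $\operatorname{Proj}R=X$ vanishes: on the chart $D_+(\theta)$, $\theta\in S[X]_{L^a}$, it is $N[\theta^{-1}]_{L^i}$, and $N[\theta^{-1}]=0$ altogether because $\theta\in S[X]_{L^a}\subseteq\Irrel{X}$ (as $L^a$ is ample) is nilpotent on $N$. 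Pushing $\widetilde{N^{(i)}}$ forward along the closed immersion $X\hookrightarrow\PP^n$ defined by $L^a$ and invoking the elementary fact that a finitely generated graded $\kk[z_0,\dots,z_n]$-module with vanishing associated sheaf --- equivalently, annihilated by a power of the irrelevant maximal ideal --- is concentrated in finitely many degrees (cf.\ \cite[II.5]{hartshorne}), we get $N_{L^i+nL^a}=0$ for $n\gg0$; running $i$ over all residues gives $N_{L^t}=0$ for $t\gg0$, i.e.\ $(\star)$. The only real work, such as it is, lies in assembling $(\star)$: concretely, in the standard facts that the Veronese-type subalgebra $R$ is finitely generated over $\kk$ and that the truncations $N^{(i)}$ are finitely generated over it; everything else is routine manipulation in the localisations $S[X][\Theta_\sigma^{-1}]$, and the version over a locally Noetherian base $U$ follows by rerunning the arguments over each $\ccO_U(U^\circ)$, uniformity of $t_0$ in assertion~\ref{item_two_ideals_have_the_same_zero_scheme} coming from Noetherianity of $\ccO_U(U^\circ)\otimes_\kk S[X]$.
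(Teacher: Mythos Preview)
The paper gives no proof of this proposition at all: it is followed by \verb|\noprf| and is introduced with the remark that the properties ``are also straightforward to verify.'' Your detailed verification via the affine toric charts $X_\sigma=\Spec(S[X][\Theta_\sigma^{-1}]_0)$ is correct and is exactly the kind of computation the authors are gesturing at. The reduction of assertion~\ref{item_two_ideals_have_the_same_zero_scheme} to $I^{\sat}=J^{\sat}$ via the first two parts, together with the key claim $(\star)$ that $I_{L^t}=(I^{\sat})_{L^t}$ for $t\gg0$, is clean; your proof of $(\star)$ through the Veronese truncations $N^{(i)}$ and vanishing of the associated sheaf is a reasonable way to package the standard argument. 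One small remark: you do not actually need projective normality of $L^a$ --- finite generation of $R=\bigoplus_n S[X]_{nL^a}$ over $\kk$ (from ampleness) suffices, since you can pass to a higher Veronese to make $R$ generated in degree~$1$ before invoking the $\PP^n$-fact. Your flag that Noetherianity of the base is used (to make $I^{\sat}=(I:\Irrel{X}^{m_0})$ for finite $m_0$, hence to get a uniform $t_0$) is appropriate; the paper does not explicitly impose it in the setup preceding the proposition, but every application in the paper is to locally Noetherian $U$, so this is harmless.
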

\noprf

\subsection{A family has finitely many  Hilbert functions}

The main goal of this subsection is to prove that a general configuration of points has maximal possible Hilbert function (Theorem~\ref{thm_Sip_open}).
Along the way we prove several more general statements that might be of an independent interest, particularly Theorem~\ref{thm_structure_of_Hilbert_functions_of_fibres} about stratification of a family of subschemes by locally closed loci with fixed Hilbert functions

\begin{lemma}
   \label{lem_ideal_of_restriction_is_restriction_of_ideal_on_open}
   Suppose $U$ is an integral Noetherian  $\Bbbk$-scheme of finite dimension. Let  $\ccZ \subset U \times_{\Bbbk} X$ be a family of subschemes as in \S\ref{sec_toric_varieties}.
   Then there exists an open dense subset
   $V\subset U$ such that for all $v\in V$
   we have
   $I_{\ccZ_{v}} = I_{\ccZ} \otimes_{\ccO_U}\kappa(v)$.
\end{lemma}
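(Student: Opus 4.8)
The plan is to reduce to an affine integral base, pass to a dense open over which the family of ideals becomes flat (so that forming fibres is exact), pin down the generic fibre by a localisation argument, and then let Theorem~\ref{thm_saturation_is_open_property} propagate the good behaviour of the generic fibre to a dense open neighbourhood.

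First I would reduce to the case $U=\Spec R$ with $R$ a Noetherian domain with fraction field $K$. Since $U$ is integral, every nonempty open is dense, and the formation of the homogeneous ideal commutes with restriction to an open subset of the base (it is computed by a pushforward along $\pi_U$, cf.\ Remark~\ref{rem_relations_ideal_sheal_and_homogeneous_ideal}\ref{item_relation_between_ideal_sheal_and_homogeneous_ideal}); moreover $\ccZ_v$ and $I_{\ccZ}\otimes_{\ccO_U}\kappa(v)$ for $v$ in an affine open depend only on the restricted family. So it suffices to find $V$ inside one affine open. Write $B=R\otimes_{\kk}S$, a finitely generated $R$-algebra, and view $I_{\ccZ}\subset B$ as an honest homogeneous ideal. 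Next I would identify the generic fibre: the irrelevant ideal $\Irrel{X}\subset S$ is finitely generated (as $S$ is Noetherian), hence so is $\Irrel{X}\cdot B$, and $I_{\ccZ}$ is saturated with respect to it. Applying Lemma~\ref{lem_saturated_is_preserved_by_localisation} to the multiplicative system $M=R\setminus\{0\}$ shows that $I_{\ccZ}\otimes_R K$ is saturated with respect to $\Irrel{X}\cdot(K\otimes_{\kk}S)$ inside $K\otimes_{\kk}S$. As $K$ is $R$-flat, $I_{\ccZ}\otimes_R K$ is precisely the image of $I_{\ccZ}$ in $K\otimes_{\kk}S$, whose saturation is $I_{\ccZ_{\eta}}$ by Remark~\ref{rem_relations_ideal_sheal_and_homogeneous_ideal}; being already saturated, it equals $I_{\ccZ_{\eta}}$. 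Thus $I_{\ccZ_{\eta}}=I_{\ccZ}\otimes_{\ccO_U}\kappa(\eta)$.

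Then comes the spreading out, which I expect to be the main obstacle: $\Pic(X)$ is infinite, so one cannot merely intersect the dense open loci over which the individual coherent sheaves $(S/I_{\ccZ})_D$ are locally free. I would circumvent this by applying Grothendieck's generic flatness (generic freeness) theorem to the \emph{single} finitely generated $B$-module $B/I_{\ccZ}$ (it is cyclic): there is a nonzero $f\in R$ with $(B/I_{\ccZ})_f$ free over $R_f$. Over $U_f:=D(f)$ this makes $B/I_{\ccZ}$, each of its graded pieces $(S/I_{\ccZ})_D$, and $I_{\ccZ}$ itself flat over $\ccO_{U_f}$; hence for every $v\in U_f$ the sequences $0\to (I_{\ccZ})_D\to H^0(X,D)\otimes_{\kk}\ccO_U\to (S/I_{\ccZ})_D\to 0$ remain exact after $\otimes_{\ccO_U}\kappa(v)$, so $I_{\ccZ}\otimes_{\ccO_U}\kappa(v)$ injects into $S\otimes_{\kk}\kappa(v)$ with image the pullback ideal of $I_{\ccZ}$, which by Remark~\ref{rem_relations_ideal_sheal_and_homogeneous_ideal} sits inside $I_{\ccZ_v}$ with equality exactly when it is saturated.

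Finally, $I_{\ccZ}|_{U_f}$ is a flat family of homogeneous ideals over the Noetherian scheme $U_f$, so Theorem~\ref{thm_saturation_is_open_property}, applied with $J=\Irrel{X}$, shows that $V:=\{v\in U_f \mid (I_{\ccZ})_v \text{ is saturated}\}$ is open. The previous paragraph identifies the fibre of this flat family over the generic point $\eta$ as $I_{\ccZ}\otimes_R K$, which is saturated, so $\eta\in V$ and $V$ is therefore dense in $U_f$, hence in $U$. For $v\in V$ we conclude $I_{\ccZ_v}=I_{\ccZ}\otimes_{\ccO_U}\kappa(v)$, as desired. The only subtlety beyond this is the standard non-functoriality of the homogeneous ideal under base change (one only has an inclusion of the pullback ideal into $I_{\ccZ_v}$ in general), and it is exactly neutralised by passing to the flat locus $U_f$.
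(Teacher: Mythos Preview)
Your proof is correct and follows essentially the same route as the paper's: restrict by generic flatness so that $I_{\ccZ}$ (equivalently $B/I_{\ccZ}$) is flat over the base, use Lemma~\ref{lem_saturated_is_preserved_by_localisation} to see that the generic fibre of $I_{\ccZ}$ is saturated, and then invoke Theorem~\ref{thm_saturation_is_open_property} to spread this to a dense open $V$, where saturation of the pullback ideal forces equality with $I_{\ccZ_v}$. Your write-up is more explicit about the affine reduction and about why flatness is needed to identify $I_{\ccZ}\otimes_{\ccO_U}\kappa(v)$ with a genuine sub-ideal of $S\otimes_{\kk}\kappa(v)$, but the argument is the same.
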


\begin{prf}
   By generic flatness  \cite[\href{https://stacks.math.columbia.edu/tag/052A}{Tag 052A}]{stacks_project} we may restrict to an open dense subset of $U$, and suppose that
   $I_{\ccZ}$ is flat over $U$.
   Let $\eta\in U$ be the generic point.
   Since the ideal $I_{\ccZ}$ is saturated, by Lemma~\ref{lem_saturated_is_preserved_by_localisation}
   also the ideal $I_{\ccZ} \otimes_{\ccO_U}\kappa(\eta) \subset S[X] \otimes_{\kk} \kappa(\eta)$
   is saturated.
   Therefore by Theorem~\ref{thm_saturation_is_open_property}
   also $I_{\ccZ} \otimes_{\ccO_U}\kappa(v) \subset S[X] \otimes_{\kk} \kappa(v)$
   is saturated for all $v$ in an open neighbourhood $V$ of the generic point $\eta$.
   But the saturation of $I_{\ccZ} \otimes_{\ccO_U}\kappa(v)$ is precisely equal to $I_{\ccZ_{v}}$ by Proposition~\ref{prop_zero_scheme_of_ideal}\ref{item_ideal_of_zero_scheme} proving the claim.
\end{prf}

\begin{thm}
   \label{thm_structure_of_Hilbert_functions_of_fibres}
   Suppose $U$ is a Noetherian finitely dimensional
   $\kk$-scheme and $\ccZ \subset U \times_{\kk} X$ is a family of subschemes of $X$ parametrised by $U$. Then
   \begin{enumerate}
    \item  \label{item_finitely_many_Hilbert_functions_in_a_family}
       there are finitely many functions   $h_i \colon \Pic X \to \NN$
       where  $i = \fromto{1}{k}$,  such that for each point $u \in U$
       the Hilbert function $h_{\ccZ_u}= h_i$ for some $i\in \fromto{1}{k}$,
    \item
    \label{item_Hilbert_function_stratification_is_locally_closed}
       the subset $U_i\subset U$ with $U_i=\set{u\in U\mid h_{\ccZ_u} = h_i}$ is locally closed,
       and $h_i \leqslant h_j$ whenever $U_i \subset \overline{U_j}$,
    \item \label{item_generic_Hilbert_function_is_open}
      if $U$ is in addition integral, then there exists an open dense subset $U^{\circ} \subset U$ such that $u\in U^{\circ}$
      if and only if $h_{\ccZ_u} = h_{\ccZ_{\eta}}$, where $\eta \in U$ is the generic point.
   \end{enumerate}
\end{thm}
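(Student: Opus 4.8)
The plan is to prove the three parts by a single Noetherian induction on $U$, reducing to the integral case and then building the stratification from the generic behaviour. First I would dispose of part~\ref{item_generic_Hilbert_function_is_open} directly: with $U$ integral and $\eta$ the generic point, Lemma~\ref{lem_ideal_of_restriction_is_restriction_of_ideal_on_open} produces an open dense $V\subset U$ on which $I_{\ccZ_v}=I_{\ccZ}\otimes_{\ccO_U}\kappa(v)$, and after further shrinking $V$ so that $I_{\ccZ}$ is flat over $V$ (generic flatness), each graded piece $(I_{\ccZ})_D$ is a flat, hence locally free, $\ccO_V$-module of some rank; since $(S[X]\otimes\kappa(v))_D$ has dimension independent of $v$, the Hilbert function $h_{\ccZ_v}(D)$ is constant on $V$ and equals $h_{\ccZ_\eta}(D)$. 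This gives one candidate open set; to get the honest ``if and only if'' statement of part~\ref{item_generic_Hilbert_function_is_open} one takes $U^{\circ}$ to be the union of all such $V$ over open subsets, or equivalently the complement of the closed locus where the Hilbert function jumps, which will come out of the stratification argument below.

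Next I would set up the Noetherian induction for parts~\ref{item_finitely_many_Hilbert_functions_in_a_family} and~\ref{item_Hilbert_function_stratification_is_locally_closed}. The assertion to prove by induction on the Noetherian topological space $U$ (which may be assumed nonempty) is: the set of Hilbert functions occurring among fibres $\ccZ_u$ is finite, and each level set $U_i$ is locally closed. Reduce first to $U$ reduced (Hilbert functions of fibres only depend on the reduced structure? — no, they depend on the subscheme $\ccZ_u$, but $\ccZ_u$ over $\kappa(u)$ only sees the point $u$, so replacing $U$ by $U_{\mathrm{red}}$ does not change any fibre; this is legitimate). Then decompose $U_{\mathrm{red}}$ into its finitely many irreducible components $U^{(1)},\dots,U^{(s)}$; it suffices to handle each component, since a finite union of finite sets of Hilbert functions is finite and a finite union of locally closed sets, intersected appropriately, remains manageable — more precisely, for each component $U^{(j)}$ apply part~\ref{item_generic_Hilbert_function_is_open} to get a dense open $U^{(j),\circ}$ on which the Hilbert function is a single function $h^{(j)}$, then apply the inductive hypothesis to the proper closed subset $U^{(j)}\setminus U^{(j),\circ}$, which has strictly smaller dimension (or is empty); this produces finitely many additional Hilbert functions and locally closed strata. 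Collecting all of these over $j$ yields the finite list $h_1,\dots,h_k$, and each $U_i=\set{u\mid h_{\ccZ_u}=h_i}$ is a finite boolean combination of the dense opens and the inductively-obtained locally closed sets, hence locally closed.

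The remaining assertion in part~\ref{item_Hilbert_function_stratification_is_locally_closed} is the monotonicity $h_i\leqslant h_j$ whenever $U_i\subset\overline{U_j}$. For this I would fix a multidegree $D$ and argue semicontinuity: over any integral base the function $u\mapsto \dim_{\kappa(u)}\bigl((S[X]\otimes\kappa(u))/I_{\ccZ_u}\bigr)_D$ is upper semicontinuous. The cleanest route is to observe $\bigl((S[X]\otimes\kappa(u))/I_{\ccZ_u}\bigr)_D \cong H^0(\ccZ_u, \ccO_{\ccZ_u}\otimes\pi_X^*D)$ when $\ccZ_u$ is finite (and in general is the relevant pushforward), so this is the fibre dimension of a coherent sheaf and upper semicontinuity of $h^0$ in flat families, plus a Noetherian gluing over the strata, gives the inequality: if $U_i\subset\overline{U_j}$ pick a point $u\in U_i$ in the closure of $U_j$, take a DVR or an integral curve through $u$ meeting $U_j$ at the generic point, pull back $\ccZ$, apply generic flatness and semicontinuity of $h^0$ along the curve to conclude $h_{\ccZ_u}(D)\geqslant h_{\ccZ_{\eta'}}(D)=h_j(D)$.

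The main obstacle I anticipate is \textbf{not} any single hard theorem but the bookkeeping in the induction: making sure that when one passes to the closed complement $U^{(j)}\setminus U^{(j),\circ}$ the dimension genuinely drops (handled since $U^{(j),\circ}$ is dense in the irreducible $U^{(j)}$), and verifying that the level sets $U_i$ assembled from different components and different induction stages are still locally closed rather than merely constructible. One must also be slightly careful that $\ccZ$ itself need not be flat over $U$ — that is exactly why the argument routes everything through integral bases where generic flatness applies, and why Lemma~\ref{lem_ideal_of_restriction_is_restriction_of_ideal_on_open} (which already absorbs the non-flatness by saturation) is the right tool. The semicontinuity step in part~\ref{item_Hilbert_function_stratification_is_locally_closed} also requires care when $\ccZ_u$ is not zero-dimensional, but since the statement is about arbitrary families one should phrase it via the pushforward $\pi_{U*}(\ccO_{\ccZ}\otimes\pi_X^*D)$ restricted to flat loci, using Remark~\ref{rem_relations_ideal_sheal_and_homogeneous_ideal}\ref{item_relation_between_ideal_sheal_and_homogeneous_ideal} to identify graded pieces, and standard cohomology-and-base-change semicontinuity.
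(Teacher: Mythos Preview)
Your overall strategy---reduce to $U$ reduced, handle the integral case via Lemma~\ref{lem_ideal_of_restriction_is_restriction_of_ideal_on_open} together with generic flatness so that each $(I_{\ccZ})_D$ is locally free, and then induct on dimension and number of irreducible components---is exactly the paper's. The paper also first proves only the weak form of \ref{item_generic_Hilbert_function_is_open} (existence of a dense open with constant Hilbert function), derives \ref{item_finitely_many_Hilbert_functions_in_a_family} from it by induction, then \ref{item_Hilbert_function_stratification_is_locally_closed}, and only at the end reads off the sharp ``if and only if'' form of \ref{item_generic_Hilbert_function_is_open} from the stratification.

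There is, however, a genuine gap in your treatment of \ref{item_Hilbert_function_stratification_is_locally_closed}. The assertion ``a finite boolean combination of the dense opens and the inductively-obtained locally closed sets, hence locally closed'' is false: such a combination is in general only constructible. You correctly flag this as the main obstacle but do not resolve it. The paper's fix is short and avoids the bookkeeping entirely: once \ref{item_finitely_many_Hilbert_functions_in_a_family} is known, put the pointwise partial order on the finite set $\{h_1,\dots,h_k\}$, observe that any \emph{minimal} $h_i$ has $U_i$ closed, remove that closed stratum, and induct on~$k$. This is the one idea your proposal is missing.

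Your monotonicity argument also needs repair on two counts. First, the identification $\bigl((S[X]\otimes\kappa(u))/I_{\ccZ_u}\bigr)_D \cong H^0(\ccZ_u, D|_{\ccZ_u})$ is not correct: the left-hand side is only the \emph{image} of the restriction map $H^0(X,D)\to H^0(\ccZ_u,D)$, and the two differ whenever that map fails to be surjective. Second, and consequently, upper semicontinuity of $h^0(\ccZ_u,-)$ yields the inequality in the wrong direction---you conclude $h_i\geqslant h_j$, whereas the statement asserts $h_i\leqslant h_j$. The semicontinuity that is actually relevant is that of $\dim(I_{\ccZ_u})_D=\dim H^0(X_u,\ccI_{\ccZ_u}(D))$, which makes $h_{\ccZ_u}(D)$ \emph{lower} semicontinuous along each flat stratum; this is what underlies the ``minimal $\Rightarrow$ closed'' step.
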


\begin{prf}
   Without loss of generality we may assume $U$ is reduced,
   as the statements are all set-theoretic and independent of the scheme structure of $U$.

   We first prove a weaker version of \ref{item_generic_Hilbert_function_is_open}:
   \renewcommand{\theenumi}{(\roman{enumi}')}
   \begin{enumerate}
    \addtocounter{enumi}{2}
    \item\label{item_generic_Hilbert_function_is_dense}
     If $U$ is in addition integral, then there exists an open dense subset $U^{\circ} \subset U$ such that
     $h_{\ccZ_u} = h_{\ccZ_{\eta}}$ for each $u\in U^{\circ}$.
   \end{enumerate}
   \renewcommand{\theenumi}{(\roman{enumi})}

    To prove \ref{item_generic_Hilbert_function_is_dense}, by generic flatness
    \cite[\href{https://stacks.math.columbia.edu/tag/052A}{Tag 052A}]{stacks_project} we may assume that the map
    $\ccZ\to U$ is flat by restricting to a suitable open dense subset of $U$ if necessary.
    By Lemma~\ref{lem_ideal_of_restriction_is_restriction_of_ideal_on_open}
    we may also assume that
     $I_{\ccZ_{u}} = I_{\ccZ} \otimes_{\ccO_U}\kappa(u)$ for all $u \in U$.
    After these two reductions, we claim we can  take $U^{\circ}=U$.
    Then for each $D\in \Pic(X)$ the $D$-th grading $(I_{\ccZ})_D$ is a flat finitely generated $\ccO_U$-module, hence it is locally free, and therefore (since $U$ is integral) the dimensions of $(I_{\ccZ})_D \otimes_{\ccO_U}\kappa(u)$ are independent of $u\in U$.
    Thus also the dimensions of $(I_{\ccZ_{u}})_D$
    are independent of $u\in U$ as claimed in \ref{item_generic_Hilbert_function_is_dense}.

    To prove \ref{item_finitely_many_Hilbert_functions_in_a_family} we argue by induction on the dimension and number of irreducible components of $U$.
    Note that both these are finite by our assumptions.
    If $\dim U=0$, then the claim is clear.
    So suppose we know the statement of \ref{item_finitely_many_Hilbert_functions_in_a_family} for all bases of dimension at most $k-1$,
    and now suppose the dimension of $U$ is equal to $k$. If $U$ is not irreducible, then we argue for each component separately.
    Thus it is enough to assume $U$ is irreducible (and therefore integral).
    By \ref{item_generic_Hilbert_function_is_dense}
    there is an open dense subset $U^{\circ} \subset U$ with only one Hilbert function.
    By the induction assumption there are only finitely many Hilbert functions on
    $U\setminus U^{\circ}$, which concludes the proof of \ref{item_finitely_many_Hilbert_functions_in_a_family}.

   To prove \ref{item_Hilbert_function_stratification_is_locally_closed} we consider the partial order on the set of Hilbert functions $\setfromto{h_1}{h_k}$:
    $h_i\leqslant h_j$ if and only if all the values are less or equal.
    Then any minimal element of the set corresponds to a closed stratum and we prove the claim by induction on $k$ removing this stratum.

    Finally, \ref{item_generic_Hilbert_function_is_open} follows from \ref{item_Hilbert_function_stratification_is_locally_closed}, as one of the strata $U_i$ must contain $\eta$. Its closure is therefore the whole $U$, and thus $U_i$ is open in $U$.
    Set $U^{\circ}:=U_i$, which satisfies the required properties.
\end{prf}

\begin{cor}
\label{cor_adding_one_general_point_for_all_L}
Suppose $X$ is as above,
$Z\subset X$ is a closed subscheme, and $p\in X$ be a general point.
Then for all $D\in \Pic(X)$:
\[
  h_{Z\cup \set{p}}(D) = \min \set{ h_{Z}(D) +1, \dim_{\kk}S[X]_{D}}.
\]
\end{cor}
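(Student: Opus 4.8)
The plan is to combine Lemma~\ref{lem_adding_one_general_point_for_single_L}, which handles a single line bundle, with the finiteness of Hilbert functions in a family (Theorem~\ref{thm_structure_of_Hilbert_functions_of_fibres}) to upgrade ``for a general point depending on $D$'' to ``for a general point working simultaneously for all $D\in\Pic(X)$''. The point is that $\Pic(X)$ is infinite, so one cannot naively intersect infinitely many dense open subsets of $X$; instead one must see that only finitely many conditions are actually in play.

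First I would set up the universal family. Consider $U=X$ (or a suitable dense open subset where a uniform statement makes sense) and the family $\ccZ\subset U\times_{\kk}X$ whose fibre over $p\in X$ is the subscheme $Z\cup\{p\}$; concretely $\ccZ = (X\times Z)\cup \Delta$ inside $X\times X$, where $\Delta$ is the diagonal. This is a family of subschemes of $X$ parametrised by the integral Noetherian finite-dimensional scheme $U=X$. By Theorem~\ref{thm_structure_of_Hilbert_functions_of_fibres}\ref{item_finitely_many_Hilbert_functions_in_a_family} and \ref{item_generic_Hilbert_function_is_open}, there is a dense open $U^{\circ}\subset X$ on which the Hilbert function $h_{\ccZ_p}$ is constant, equal to the generic value $h_{\ccZ_{\eta}}$; call this common value $h^{\gen}$. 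So for $p\in U^{\circ}$ general we have $h_{Z\cup\{p\}} = h^{\gen}$, a single fixed function, and it remains only to identify $h^{\gen}(D)$ with $\min\{h_Z(D)+1,\ \dim_{\kk}S[X]_D\}$ for every $D$.

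For the identification, fix an arbitrary $D\in\Pic(X)$ and apply Lemma~\ref{lem_adding_one_general_point_for_single_L} with $L=D$: for $p$ in some dense open $U_D\subset X$ we have either $H^0(X,\ccI_Z\otimes D)=0$, equivalently $h_Z(D)=\dim_{\kk}S[X]_D$, in which case $h_{Z\cup\{p\}}(D)=h_Z(D)=\dim_{\kk}S[X]_D = \min\{h_Z(D)+1,\dim_{\kk}S[X]_D\}$ since adding a point cannot increase the space of sections killed; or else $H^0(X,\ccI_{Z\cup\{p\}}\otimes D)$ has codimension $1$ in $H^0(X,\ccI_Z\otimes D)$, which translates (using $h_Z(D)=\dim_{\kk}S[X]_D - \dim H^0(X,\ccI_Z\otimes D)$, as $S[X]_D = H^0(X,D)$ and $I(Z)_D = H^0(X,\ccI_Z\otimes D)$) to $h_{Z\cup\{p\}}(D)=h_Z(D)+1$, which again is $\min\{h_Z(D)+1,\dim_{\kk}S[X]_D\}$ because $h_Z(D)+1\le\dim_{\kk}S[X]_D$ in that branch. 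Intersecting $U^{\circ}$ with $U_D$ (still dense open in $X$), we conclude $h^{\gen}(D) = \min\{h_Z(D)+1,\dim_{\kk}S[X]_D\}$ for that $D$; but $h^{\gen}$ does not depend on the choice of $D$, so this equality holds for every $D\in\Pic(X)$ at once, proving the corollary.

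The main obstacle is exactly the transition from ``one open set per $D$'' to ``one open set for all $D$'': a priori one is intersecting infinitely many dense opens $U_D$, whose intersection may be empty. The resolution is that Theorem~\ref{thm_structure_of_Hilbert_functions_of_fibres} already packages the entire Hilbert function into finitely many possibilities over the family, so the constancy of the full function on a single dense open $U^{\circ}$ is free, and Lemma~\ref{lem_adding_one_general_point_for_single_L} then only needs to be invoked to pin down the value of the (already-constant) generic function at each $D$ separately — which is harmless, since evaluating a fixed function at various points requires no simultaneous genericity. One should also double-check the bookkeeping identifying $I(Z)_D$ with $H^0(X,\ccI_Z\otimes D)$ and $S[X]_D$ with $H^0(X,D)$, which is the standard Cox-ring dictionary recalled in \S\ref{sec_toric_varieties}, and note that smoothness (or at least that $H^0(X,D)$ is finite-dimensional) is used so that all the numbers involved are finite.
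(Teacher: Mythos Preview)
Your proposal is correct and follows essentially the same route as the paper's proof: set up the family $\ccZ$ over $U$ with fibres $Z\cup\{p\}$, invoke Theorem~\ref{thm_structure_of_Hilbert_functions_of_fibres}\ref{item_generic_Hilbert_function_is_open} to get a single dense open on which the full Hilbert function is constant, and then use Lemma~\ref{lem_adding_one_general_point_for_single_L} degree by degree to identify that constant function. The only cosmetic difference is that the paper takes $U=X\setminus Z$ rather than $U=X$ (so that the fibre is literally $Z\cup\{p\}$ with $p\notin Z$), which is exactly the ``suitable dense open subset'' you allude to parenthetically.
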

\begin{prf}
   If $Z=X$, then claim is clear.
   So suppose $Z$ is a proper subscheme and consider $U= X\setminus Z$ to be the set-theoretic complement of $Z$.
   Then in particular $U$ is integral Noetherian scheme of finite dimension.
   Consider the subscheme $\ccZ \subset U \times X$, which is the union of $U \times Z$ and the diagonal $\Delta \subset U\times X$.
   Then the fibre $\ccZ$ over $p\in U$ is precisely $Z \cup \set{p}$.
   For each single $D\in \Pic(X)$ and general $p\in U$
   (with the generality condition dependent of $D$)
   we have $h_{\ccZ_{p}}(D)= \min \set{ h_{Z}(D) +1, \dim_{\kk}S[X]_{D}}$
   by Lemma~\ref{lem_adding_one_general_point_for_single_L}.
   Thus $h_{\ccZ_{\eta}}(D)= \min \set{ h_{Z}(D) +1, \dim_{\kk}S[X]_{D}}$ for the generic point $\eta\in U$.
   By Theorem~\ref{thm_structure_of_Hilbert_functions_of_fibres}\ref{item_generic_Hilbert_function_is_open}
   there is an open subset $U^{\circ}\subset U$ such that the Hilbert function over $U^{\circ}$
   is fixed, end equal to the function at the general point $\eta$, proving the claim of the corollary.
\end{prf}

Fix an integer $r>0$.
As in \cite[\S3.2]{nisiabu_jabu_border_apolarity}
  by $h_{r, X}\colon \Pic X \to \NN$
   denote the
  \emph{generic Hilbert function of $r$ points on $X$},
  that is $h_{r, X}(D) = \min \left( r, \dim_{\kk} H^0(X,D)\right)$.
In \cite[Lem.~3.9]{nisiabu_jabu_border_apolarity} we show that (over complex numbers) a very general configuration of $r$ points in X has the Hilbert function $h_{r, X}$.
Here we show that the same is actually true for a general configuration. Moreover, we extend the result to an arbitrary algebraically closed base field.

\begin{thm}\label{thm_Sip_open}
  Suppose
    $X$ is a smooth projective toric variety over $\kk$.
    Suppose that $Z:=\setfromto{x_1}{x_r}$ is a general configuration of points on $X$ (that is, $(\fromto{x_1}{x_r}) \in X^{\times r}$ is a general point).
    Then the Hilbert function $h_Z$ is equal to $h_{r, X}$.
  Moreover, the set of such $r$-tuples of points
     is open in $X^{\times r}$.
\end{thm}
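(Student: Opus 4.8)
The plan is to prove Theorem~\ref{thm_Sip_open} by induction on $r$, adding one general point at a time, using Corollary~\ref{cor_adding_one_general_point_for_all_L} as the inductive step and Theorem~\ref{thm_structure_of_Hilbert_functions_of_fibres}\ref{item_generic_Hilbert_function_is_open} to promote ``for a general point'' statements to ``on an open set''. The base case $r=1$ is immediate: a single (necessarily reduced) point $x_1$ has $h_{\{x_1\}}(D)=\min(1,\dim H^0(X,D)) = h_{1,X}(D)$ for every $D$, since $H^0(X,D)\to H^0(\{x_1\},D|_{\{x_1\}}) = \kk$ is surjective whenever $D$ is effective (and the target is $0$ otherwise), and $X$ being a smooth projective toric variety guarantees the relevant base-point-freeness. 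So $h_{\{x_1\}} = h_{1,X}$ identically, and the locus where this holds is all of $X$, hence open.

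For the inductive step, suppose the theorem holds for $r-1$, so there is a (nonempty, hence dense) open subset $V\subset X^{\times (r-1)}$ on which any tuple $(x_1,\dots,x_{r-1})$ spans the scheme $Z' := \{x_1,\dots,x_{r-1}\}$ with $h_{Z'} = h_{r-1,X}$. Fix such a tuple; then Corollary~\ref{cor_adding_one_general_point_for_all_L} (applied with the closed subscheme $Z'$) says that for a general point $p\in X$ we have, for every $D\in\Pic(X)$,
\[
  h_{Z'\cup\{p\}}(D) = \min\{h_{Z'}(D)+1,\ \dim_{\kk}S[X]_D\} = \min\{r,\ \dim_{\kk}H^0(X,D)\} = h_{r,X}(D),
\]
using $\min\{\min(r-1,d)+1,\,d\} = \min(r,d)$. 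To turn this into an open condition on the full product $X^{\times r}$, consider the family $\ccZ\subset (V\times X)\times_{\kk} X$ whose fibre over $((x_1,\dots,x_{r-1}),p)$ is $\{x_1,\dots,x_{r-1}\}\cup\{p\}$ — concretely, the union of the pulled-back universal subscheme over $V\times X$ with the graph of the second projection — parametrised by the integral Noetherian finite-dimensional base $U := V\times X$. Theorem~\ref{thm_structure_of_Hilbert_functions_of_fibres}\ref{item_generic_Hilbert_function_is_open} then provides a dense open $U^{\circ}\subset U$ on which the Hilbert function of the fibre is constant and equal to $h_{\ccZ_\eta}$ at the generic point $\eta$; by the Corollary-based computation above applied at $\eta$ (or at any point of $U^{\circ}$), that constant value is $h_{r,X}$. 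Pulling $U^{\circ}$ back along the open immersion $V\times X\hookrightarrow X^{\times r}$ gives a nonempty open subset of $X^{\times r}$ on which $h_Z = h_{r,X}$, completing the induction.

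Finally, to get the precise statement ``for a \emph{general} tuple $h_Z = h_{r,X}$, and the set of such tuples is open'': the locus $\{Z : h_Z = h_{r,X}\}$ is one of the finitely many Hilbert-function strata of $X^{\times r}$ carrying the universal length-$r$ subscheme, and Theorem~\ref{thm_structure_of_Hilbert_functions_of_fibres}\ref{item_Hilbert_function_stratification_is_locally_closed} says each stratum is locally closed; the induction just showed this particular stratum contains a nonempty open set, hence — being locally closed and containing a dense-in-its-closure point that is the generic point of $X^{\times r}$ — it \emph{is} open and dense. \textbf{The main obstacle} I anticipate is purely bookkeeping rather than conceptual: one must be careful that the family $\ccZ$ over $V\times X$ is genuinely a closed subscheme of $(V\times X)\times X$ as demanded by \S\ref{sec_toric_varieties} (the universal length-$(r-1)$ subscheme restricted to $V$ is flat and finite, and adding the diagonal graph keeps it closed), and that ``general $p$ depending on $D$'' in the Corollary is correctly aggregated — but this aggregation is exactly what Theorem~\ref{thm_structure_of_Hilbert_functions_of_fibres}\ref{item_generic_Hilbert_function_is_open} is designed to handle, since it produces a single open set working simultaneously for all $D$. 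No characteristic-dependent input is needed anywhere, so the extension to arbitrary algebraically closed $\kk$ is automatic.
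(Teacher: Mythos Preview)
Your approach is essentially the same as the paper's: induction on $r$ via Corollary~\ref{cor_adding_one_general_point_for_all_L}, with openness supplied by Theorem~\ref{thm_structure_of_Hilbert_functions_of_fibres}\ref{item_generic_Hilbert_function_is_open}. The paper is slightly more economical, applying the openness theorem just once at the end directly to $U=X^{\times r}$ with the universal incidence $\ccZ=\{(\fromto{x_1}{x_r},x)\mid x=x_i \text{ for some } i\}$, rather than threading it through each inductive step.

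One small correction to your base case: it is \emph{not} true that $h_{\{x_1\}}=h_{1,X}$ for \emph{every} point of a smooth projective toric variety. An effective divisor on such $X$ need not be base-point free (e.g.\ the negative section on a Hirzebruch surface $\mathbb{F}_n$, $n\ge 1$), so a point in the base locus fails the equality at that $D$. You only need the statement for a \emph{general} point, and that follows immediately from Corollary~\ref{cor_adding_one_general_point_for_all_L} applied with $Z=\emptyset$; openness then comes from the same structural theorem. With that adjustment your argument is correct.
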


\begin{prf}
   The claim about the Hilbert function follows by induction from Corollary~\ref{cor_adding_one_general_point_for_all_L}.
   The openness claim follows from Theorem~\ref{thm_structure_of_Hilbert_functions_of_fibres}\ref{item_generic_Hilbert_function_is_open} applied to $U=X^{\times r}$ and $\ccZ\subset U \times X$ equal to
   \[
      \ccZ = \set{(\fromto{x_1}{x_r} , x) \mid x=x_i\text{ for some } 1\leqslant i \leqslant k}.
   \]
\end{prf}

\section{Hilbert schemes, Grassmann relative linear span and cactus varieties}
\label{sec_cactus_variety}

In this section we compare two types of Hilbert schemes.
The ``usual'' one $\usualHilb(X)$, introduced by Grothendieck \cite{grothendieck_techn_de_constr_et_exist_GA_IV_Hilb}, parametrises flat families of closed subschemes of a given projective variety $X$.
The ``multigraded'' one $\Hilb(S)$, introduced by Haiman and Sturmfels \cite{haiman_sturmfels_multigraded_Hilb}, parametrises families of multihomogeneous ideals in a graded ring $S$ with a locally constant Hilbert function.

The main setting in which both concepts appear is (similarly to Subsection~\ref{sec_toric_varieties}) when $X$ is a smooth projective toric variety and $S=S[X]$ is its Cox ring (again smoothness of $X$ is not critical, as in Subsection~\ref{sec_toric_varieties}).
Then both schemes $\Hilb(S)$ and $\usualHilb(X)$ make sense and have projective connected components.
We will see that they are related by a natural morphism $\sch\colon \Hilb(S) \to \usualHilb(X)$ which is surjective on points.
Moreover, for each irreducible component of $\usualHilb(X)$ there exists a unique irreducible component of $\Hilb(S)$ such that the underlying morphism of varieties is birational.

\subsection{Grothendieck and multigraded Hilbert schemes}
\label{sec_Hilb_schemes}

Comprehensive and detailed definitions, constructions, and descriptions of properties of $\usualHilb(X)$ can be found in numerous excellent textbooks including
\cite[Chapt.~5]{fantechi_et_al_fundamental_ag},
\cite[Chapt.~4]{jelisiejew_PhD},
\cite[Chapt.~4]{sernesi_Deformations_of_algebraic_schemes}.
See also \cite[Chapt.~I]{kollar_book_rational_curves}.
In our setting, let $L$ be any very ample line bundle on $X$, then $\usualHilb(X)$ is a disjoint union $\bigsqcup_{p\in \QQ[t]} \usualHilb_{L,p}(X)$ where $p$ is a polynomial in a single variable with integral values for integer arguments
and $\usualHilb_{L,p}(X)$ is a scheme parameterising subschemes of $X$
with Hilbert polynomial $p$ (that is, the Euler characteristics of $L^{\otimes t}$ restricted to the subscheme is equal $p(t)$).
In this setting (with $X$ not necessarily isomorphic to a projective space), it is not necessary that $\usualHilb_{L,p}(X)$ is connected.
Moreover, different choices of $L$ might lead to different decompositions into pieces according to Hilbert polynomials, in any case, for each $L$ and each $p$, $\usualHilb_{L,p}(X)$ is always a projective scheme over $\kk$ \cite[Thm~I.1.4]{kollar_book_rational_curves}.

On the other hand the multigraded Hilbert scheme $\Hilb S[X]$
is less standard and not so often considered or exploited, perhaps due to the larger complexity and pathologies appearing on much smaller cases.
We also have a disjoint union decomposition
$\Hilb S[X] = \bigsqcup_{h \colon \Pic X \to \NN} \Hilb_h S[X]$, where $h$ is any function and $\Hilb_h S[X]$ parametrises all families of multihomogeneous ideals with Hilbert function $h$. Again, each $\Hilb_h S[X]$ is projective though not necessarily connected.

\begin{notation}[elements of parameter spaces]
\label{not_parameter_spaces}
  For a space parametrising some type of objects
    we need to distinguish an element of the space $[\bullet]$ and the object $\bullet$ it represents.
  Explicitly, we will use the Grothendieck Hilbert scheme $\Hilb(S)$, multigraded Hilbert schemes $\usualHilb(X)$, and Grassmannians $\Gr(i, V)$, or their subsets.
  \begin{itemize}
   \item For a homogeneous ideal $I \subset S$
           we  denote by $[I]$ the corresponding $\kk$-point of $\Hilb(S)$, and vice versa: if $[I] \in \Hilb(S)(\kk)$, then implicitly $I\subset S$ is the ideal represented by $[I]$.
   \item For a subscheme $R \subset X$
           we  denote by $[R]$ the corresponding $\kk$-point of $\usualHilb(X)$, and vice versa as above.
   \item If $V$ is a vector space, then for a linear subspace $E \subset V$ with $\dim E =i$
           we denote by $[E]\in \Gr(i, V)(\kk)$ the corresponding $\kk$-point, and vice versa.
  \end{itemize}
  To avoid confusion, we refrain from using this convention for non-closed points of the parameter space.
\end{notation}

\begin{prop}
   There exists a morphism of schemes $\sch\colon \Hilb S[X] \to \usualHilb X$ which on points takes a homogeneous ideal $I\subset S[X]$ to the scheme $Z(I) \subset X$, $\sch([I]) = [Z(I)]$.
\end{prop}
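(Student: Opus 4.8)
The plan is to construct the morphism $\sch$ functorially, by specifying what it does on families, and then to check that on $\kk$-points it has the stated description. Recall from Subsection~\ref{sec_toric_varieties} that to every homogeneous ideal $I \subset \ccO_U \otimes_{\kk} S[X]$ we associated a closed subscheme $\ccZ(I) \subset U \times X$ via the sheaf construction and Proposition~\ref{prop_zero_scheme_of_ideal}. So first I would take the universal family on $\Hilb S[X]$: by definition of the multigraded Hilbert scheme there is a universal homogeneous ideal sheaf $\ccI^{\mathrm{univ}} \subset \ccO_{\Hilb S[X]} \otimes_{\kk} S[X]$ whose fibrewise Hilbert function is the prescribed $h$. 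Apply the $\ccZ(-)$ construction to get a closed subscheme $\ccZ^{\mathrm{univ}} := \ccZ(\ccI^{\mathrm{univ}}) \subset \Hilb S[X] \times X$.

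The key step is to show $\ccZ^{\mathrm{univ}}$ is flat over $\Hilb S[X]$ with constant Hilbert polynomial, so that it is classified by a unique morphism $\sch \colon \Hilb S[X] \to \usualHilb X$ to the Grothendieck Hilbert scheme (using the universal property of $\usualHilb X$, and the fact that each component is projective so the Hilbert polynomial is the only discrete invariant). Flatness of $\ccZ^{\mathrm{univ}}$ over the base is where the real work lies: locally on an affine chart $U^{\circ} \times X^{\circ}$ with $X^{\circ} = \Spec(S[X][\Theta^{-1}]_0)$, the structure sheaf of $\ccZ^{\mathrm{univ}}$ is $\bigl(S[X][\Theta^{-1}]/I^{\mathrm{univ}}[\Theta^{-1}]\bigr)_0$ over $\ccO_U(U^{\circ})$; one has to argue this degree-zero piece of the dehomogenised quotient is flat over the base. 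This follows because, Zariski-locally on $\Hilb S[X]$, the graded pieces $(\ccO_U \otimes S[X]/\ccI^{\mathrm{univ}})_D$ are free $\ccO_U$-modules of rank $h(D)$ (this is the defining property of the multigraded Hilbert scheme \cite{haiman_sturmfels_multigraded_Hilb}); localising at $\Theta$ and taking degree $0$ is an exact, base-change-compatible operation on graded modules, which preserves flatness. The constancy of the Hilbert polynomial of the fibres then follows because the Hilbert polynomial of $\ccZ(I)$ is determined by the values $h(L^{\otimes t})$ of the Hilbert function for $t \gg 0$ (Proposition~\ref{prop_zero_scheme_of_ideal}\ref{item_two_ideals_have_the_same_zero_scheme} shows $\ccZ(I)$ only depends on these large-degree pieces), which are locally constant on $\Hilb S[X]$, hence constant on each connected component.

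Finally I would verify the description on $\kk$-points. For $[I] \in \Hilb S[X](\kk)$ the fibre of $\ccZ^{\mathrm{univ}}$ over $[I]$ is, by functoriality of the $\ccZ(-)$ construction under base change $\Spec \kk \to \Hilb S[X]$, exactly the subscheme $\ccZ(I) \subset X$ — here I use Remark~\ref{rem_relations_ideal_sheal_and_homogeneous_ideal}\ref{item_relation_between_ideal_sheal_and_homogeneous_ideal}, noting that the ideal \emph{sheaf} $\ccI_{\ccZ}$ is functorial even though the homogeneous ideal is not. Hence the classifying morphism sends $[I] \mapsto [\ccZ(I)] = [Z(I)]$, as claimed. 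Writing $Z(I)$ for $\ccZ(I)$ in the case $U = \Spec \kk$ matches the notation in the statement.

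I expect the main obstacle to be the flatness check for $\ccZ^{\mathrm{univ}}$: one must pass carefully between the graded picture (where freeness of graded pieces is built into the definition of $\Hilb S[X]$) and the dehomogenised affine charts of $U \times X$, making sure that taking the degree-zero part of a localisation commutes with the relevant base changes and preserves local freeness. Everything else — the universal property of $\usualHilb X$, constancy of the Hilbert polynomial, the identification of fibres — is then routine.
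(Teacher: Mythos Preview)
Your proposal is correct and follows the same overall architecture as the paper: form the universal ideal over $\Hilb_h S[X]$, take the associated closed subscheme $\ccZ^{\mathrm{univ}}\subset \Hilb_h S[X]\times X$, prove it is flat over the base with constant Hilbert polynomial, and invoke the universal property of $\usualHilb X$. The one genuine difference is how flatness is verified. You work on affine charts $U^\circ\times X^\circ$ and observe that the structure sheaf of $\ccZ^{\mathrm{univ}}$ there is the degree-zero part of a localisation of the graded quotient, hence a filtered colimit of the locally free pieces $(S[X]/\ccI^{\mathrm{univ}})_{kL}$ and therefore flat; this is correct, though you should make the filtered-colimit step explicit rather than appeal to ``exactness preserves flatness''. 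The paper instead stays global on the $X$-side: using Proposition~\ref{prop_zero_scheme_of_ideal}\ref{item_two_ideals_have_the_same_zero_scheme} and Remark~\ref{rem_relations_ideal_sheal_and_homogeneous_ideal}\ref{item_relation_between_ideal_sheal_and_homogeneous_ideal} it identifies, for $t\gg 0$, the graded piece $J_{L^t}$ of the universal ideal with the pushforward $(\pi_{\Hilb_h})_*(\ccI_{\ccR}\otimes\pi_X^*L^t)$, notes these are direct summands of a flat module and hence locally free, and then applies the projective flatness criterion of \cite[Prop.~III.9.9]{hartshorne}. The paper's route recycles more of the preparatory lemmas and avoids dehomogenising; yours is more hands-on and makes transparent exactly which free modules are in play.
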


A similar statement with a slightly different notation can be found in \cite[\S2.4.1]{jelisiejew_mandziuk_limits_of_saturated_ideals}.
We include the proof for the sake of completeness.
\begin{prf}
   We argue componentwise for each Hilbert function $h \colon \Pic X \to \NN$ separately.
   If $\Hilb_h S[X]  =\emptyset$, then there is nothing to do.
   Thus suppose $\Hilb_h S[X] \neq \emptyset$ and fix a very ample line bundle $L$.
   In particular, pick any $[I]\in \Hilb_h S[X]$
   the Hilbert polynomial $p(t)$ of $Z(I)$ with respect to $L$ agrees with $h(L^{t})$ for $t\gg 0$.
   Therefore $p$ is determined by $h$ (and $L$), and does not depend on the choice of $I$.
   To prove the proposition it is enough to construct $\sch\colon \Hilb_h S[X] \to \usualHilb_{L,p} X$.

   The universal ideal
   $J\subset \ccO_{\Hilb_h S[X]} \otimes_{\kk} S[X]$ is a homogeneous ideal that
   defines a closed subscheme $\ccR:=\ccZ(J)\subset \Hilb_h S[X]\times X$.
   Consider also $I_\ccR\subset \ccO_{\Hilb_h S[X]} \otimes_{\kk} S[X]$, the homogeneous ideal of $\ccR$.
   By Proposition~\ref{prop_zero_scheme_of_ideal}\ref{item_ideal_of_zero_scheme}
   we have $I_\ccR = J^{\sat}$ and by
   Proposition~\ref{prop_zero_scheme_of_ideal}\ref{item_two_ideals_have_the_same_zero_scheme}
   for $t\gg 0$ we have:
   \[
     J_{L^t} = (I_\ccR)_{L^t}  \stackrel{\text{by Remark~\ref{rem_relations_ideal_sheal_and_homogeneous_ideal}
         \ref{item_relation_between_ideal_sheal_and_homogeneous_ideal}} }{=} (\pi_{\Hilb_h S[X]})_* \left(\ccI_{\ccR}\otimes \pi_X^*(L^{t})\right)
     % długa równość, nie pamiętam jak się nazywa
   \]
   In particular, since $J$ is a flat $\ccO_{\Hilb_h S[X]}$-module, also its direct summands are flat, and hence $(\pi_{\Hilb_h S[X]})_* \left(\ccI_{\ccR}\otimes \pi_X^*(L^{t})\right)$ is flat and finite module, hence locally free of rank $p(t)$.
   Thus $\ccR$ is
     flat over $\Hilb_h S[X]$
   by \cite[Prop.~III.9.9]{hartshorne} or \cite[Lem.~5.5]{fantechi_et_al_fundamental_ag} or \cite[Exercise~24.7.C(b)]{vakil_FoAG}.
   Therefore we have a map  $\sch\colon \Hilb_h S[X] \to \usualHilb_{L,p} X$ as claimed.
\end{prf}

Note that the map $\sch$ is surjective on $\kk$-points: if $[R]\in \usualHilb(X)$ is a $\kk$-point which represents a closed subscheme $R\subset X$, then the multihomogeneous ideal $I(R) \subset S[X]$ is an element of $\Hilb(S[X])$ such that $\sch([I(R)]) = [R]$.
However, typically $\sch$ is \emph{not}
a projective morphism,
but only for a silly reason: there could be infinitely many connected components of $\Hilb(S[X])$ that are mapped onto the same connected component of $\usualHilb(X)$.
Once restricted to suitable Hilbert function $h \colon \Pic X\to \NN$ and suitable Hilbert polynomial $p$, the map
$\sch\colon \Hilb_h S[X] \to \usualHilb_{L,p} X$ is a morphism of projective schemes, hence projective.

In the following couple of statements we disscuss  a partial inverse of the morphism $\sch$, at least for the reduced structures.

\begin{notation}\label{notation-generic_Hilbert_function}
Suppose $\ccH\subset \reduced{\usualHilb(X)}$ is any irreducible component.
Let $\ccZ \subset X \times \ccH$ be the natural family of subschemes of $X$ parametrised by $\ccH$.
Suppose also that $\ccH^{\circ}\subset \ccH$ is the open dense subset of $\ccH$
  --- as in Theorem~\ref{thm_structure_of_Hilbert_functions_of_fibres}\ref{item_generic_Hilbert_function_is_open} ---
  where the Hilbert function of $R=\ccZ_{[R]}$ for $[R]\in \ccH^0$ is equal to the function  $h_{\ccH}$.  
\end{notation}

\begin{prop}
\label{prop_idealmap}
Let $\ccH$, $\ccH^0$ and $h_{\ccH}$ be as in the Notation~\ref{notation-generic_Hilbert_function}  as above,
 there is a morphism
  \[
    \idealmap \colon \ccH^{\circ} \to \Hilb_{h_{\ccH}}(S[X])
  \]
 such that $\sch \circ \idealmap = \id_{\ccH^{\circ}}$ and $\idealmap([R]) = [I(R)]$ for all $[R]\in \ccH^{\circ}$.
\end{prop}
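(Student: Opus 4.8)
\begin{prf}
The plan is to produce a flat family of multihomogeneous ideals in $S[X]$ over $\ccH^{\circ}$ whose Hilbert function is $h_{\ccH}$ and whose fibre over a $\kk$-point $[R]$ is $I(R)$; the universal property of the multigraded Hilbert scheme then yields $\idealmap$, and the relation $\sch\circ\idealmap=\id_{\ccH^{\circ}}$ follows because $I(R)$ is saturated. The candidate for this family is the homogeneous ideal $I_{\ccZ^{\circ}}$ of $\ccZ^{\circ}:=\ccZ|_{\ccH^{\circ}}\subset\ccH^{\circ}\times X$, where $\ccZ\subset X\times\ccH$ is the family of Notation~\ref{notation-generic_Hilbert_function}.

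First I would record the flatness. Since $\ccZ$ is the base change of the universal family on $\usualHilb(X)$ along $\ccH\hookrightarrow\usualHilb(X)$, and flatness is stable under base change, $\ccZ^{\circ}\to\ccH^{\circ}$ is flat; from the exact sequence $0\to\ccI_{\ccZ^{\circ}}\to\ccO_{\ccH^{\circ}\times X}\to\ccO_{\ccZ^{\circ}}\to 0$ it follows that $\ccI_{\ccZ^{\circ}}$ is flat over $\ccH^{\circ}$ and restricts on the fibre over $u\in\ccH^{\circ}$ to $\ccI_{\ccZ_{u}}$. After twisting by the line bundle $\pi_X^{*}D$ for $D\in\Pic(X)$, the sheaf $\ccI_{\ccZ^{\circ}}\otimes\pi_X^{*}D$ is still flat over $\ccH^{\circ}$ with fibres $\ccI_{\ccZ_{u}}\otimes D$; by Remark~\ref{rem_relations_ideal_sheal_and_homogeneous_ideal}\ref{item_relation_between_ideal_sheal_and_homogeneous_ideal} its pushforward to $\ccH^{\circ}$ is the graded piece $(I_{\ccZ^{\circ}})_{D}$, while $(I_{\ccZ_{u}})_{D}=H^{0}\bigl(X_{u},\ccI_{\ccZ_{u}}\otimes D\bigr)$ for each $u$.

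The decisive step is to make $(I_{\ccZ^{\circ}})_{D}$ locally free with compatible base change over the whole of $\ccH^{\circ}$, not merely over a dense open subset (which would already follow from Lemma~\ref{lem_ideal_of_restriction_is_restriction_of_ideal_on_open}). This is where the defining property of $\ccH^{\circ}$ is used: by Notation~\ref{notation-generic_Hilbert_function} and Theorem~\ref{thm_structure_of_Hilbert_functions_of_fibres}\ref{item_generic_Hilbert_function_is_open}, the Hilbert function $h_{\ccZ_{u}}$ equals $h_{\ccH}$ for every $u\in\ccH^{\circ}$, hence $\dim_{\kappa(u)}H^{0}(X_{u},\ccI_{\ccZ_{u}}\otimes D)=\dim_{\kk}S[X]_{D}-h_{\ccH}(D)$ is independent of $u$. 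Since $\ccH^{\circ}$ is integral and $\ccI_{\ccZ^{\circ}}\otimes\pi_X^{*}D$ is coherent and flat over $\ccH^{\circ}$, Grauert's theorem \cite[Cor.~III.12.9]{hartshorne} applies and shows that $(I_{\ccZ^{\circ}})_{D}$ is locally free of rank $\dim_{\kk}S[X]_{D}-h_{\ccH}(D)$ and that the natural map $(I_{\ccZ^{\circ}})_{D}\otimes\kappa(u)\to(I_{\ccZ_{u}})_{D}$ is an isomorphism for all $u$. In particular $(I_{\ccZ^{\circ}})_{D}\hookrightarrow\ccO_{\ccH^{\circ}}\otimes_{\kk}S[X]_{D}$ is injective on each fibre, hence a subbundle, and its cokernel $\bigl((\ccO_{\ccH^{\circ}}\otimes_{\kk}S[X])/I_{\ccZ^{\circ}}\bigr)_{D}$ is locally free of rank $h_{\ccH}(D)$. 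Thus $I_{\ccZ^{\circ}}$ is a flat family of multihomogeneous ideals over $\ccH^{\circ}$ with Hilbert function $h_{\ccH}$, so by the universal property of $\Hilb_{h_{\ccH}}(S[X])$ it defines a morphism $\idealmap\colon\ccH^{\circ}\to\Hilb_{h_{\ccH}}(S[X])$; by the base change isomorphism, $\idealmap([R])$ represents the ideal $I_{\ccZ_{[R]}}=I(R)$ for every $\kk$-point $[R]\in\ccH^{\circ}$.

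It remains to verify $\sch\circ\idealmap=\id_{\ccH^{\circ}}$. For a $\kk$-point $[R]\in\ccH^{\circ}$ we have $\idealmap([R])=[I(R)]$ by the previous paragraph, and since $I(R)$ is saturated the subscheme of $X$ it cuts out is $R$ itself (Proposition~\ref{prop_zero_scheme_of_ideal}), so $\sch(\idealmap([R]))=[R]$. Hence $\sch\circ\idealmap$ and $\id_{\ccH^{\circ}}$ are two morphisms from the reduced scheme $\ccH^{\circ}$ to the separated scheme $\usualHilb(X)$ agreeing on all $\kk$-points, and therefore coincide. The main obstacle is the uniformity in the application of Grauert's theorem: one needs $h_{\ccZ_{u}}=h_{\ccH}$ at \emph{every} point of $\ccH^{\circ}$ and not merely at the generic point, for otherwise one only obtains $\idealmap$ over a dense open subset of $\ccH^{\circ}$.
\end{prf}
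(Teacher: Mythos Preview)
Your proposal is correct and follows essentially the same approach as the paper: build the family of homogeneous ideals $I_{\ccZ^{\circ}}$, show each graded piece is locally free of the right rank using the constancy of the Hilbert function on $\ccH^{\circ}$, invoke the universal property of $\Hilb_{h_{\ccH}}(S[X])$, and then verify $\sch\circ\idealmap=\id$ on $\kk$-points. The only difference is in the justification of local freeness: the paper simply cites \cite[Lem.~II.8.9]{hartshorne} (constant fibre dimension on a reduced base implies locally free), whereas you invoke Grauert's theorem \cite[Cor.~III.12.9]{hartshorne}, which has the advantage of simultaneously giving the base-change isomorphism $(I_{\ccZ^{\circ}})_D\otimes\kappa(u)\simeq(I_{\ccZ_u})_D$ that the paper tacitly assumes when it writes ``$I|_{\{[R]\}}=I(R)$''.
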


\begin{prf}
  Consider the universal homogeneous ideal $I \subset \ccO_{\ccH^{\circ}} \otimes_{\kk} S[X]$, so that $I|_{\set{[R]}} = I(R)$.
  By the definition of $\ccH^{\circ}$ the Hilbert function of $I|_{\set{[R]}}$ is constant and equal to $h_{\ccH}$.
  Since $\ccH$ (and also $\ccH^{\circ}$) is reduced, for every $D\in \Pic(X)$ the sheaf $I_D$ is locally free of rank $h_{\ccH}(D)$ by \cite[Lem.~II.8.9]{hartshorne}.
  Thus by the universal property of the multigraded Hilbert scheme there exists a morphism $\idealmap \colon \ccH^{\circ} \to \Hilb_{h_\ccH}$
  such that $\idealmap([R]) = [I(R)]$ for every $[R]\in \ccH^{\circ}$.

  Moreover,
  \[
   \sch \circ \idealmap([R]) = \sch([I(R)]) = [Z(I(R))]= [R],
  \]
  and since $\ccH^{\circ}$ is reduced, this verification on points is sufficient to show that
  $\sch \circ \idealmap = \id_{\ccH^{\circ}}$, concluding the proof of the proposition.
\end{prf}

\begin{cor}
\label{cor_saturation_map_is_birational}
  The closure of the image
    $\overline{\idealmap(\ccH^{\circ})}$ is an irreducible component $\mathbf{H}_{\ccH}$ of $\reduced{\Hilb_{h_{\ccH}}(S[X])}$,
    and $\sch\colon\mathbf{H}_{\ccH} \to \ccH$ is a projective birational morphism.
\end{cor}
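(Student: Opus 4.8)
The plan is to combine Proposition~\ref{prop_idealmap} with elementary facts about closures of images of morphisms between projective schemes, together with the openness of saturation (Corollary following Theorem~\ref{thm_saturation_is_open_property}). First I would recall that $\ccH^{\circ}$ is irreducible (being a dense open in the irreducible $\ccH$), so its image under the morphism $\idealmap$ is irreducible, and hence so is the closure $\mathbf{H}_{\ccH} := \overline{\idealmap(\ccH^{\circ})} \subset \reduced{\Hilb_{h_{\ccH}}(S[X])}$. To see that $\mathbf{H}_{\ccH}$ is an irreducible \emph{component}, and not merely an irreducible closed subset, I would compute its dimension: since $\sch \circ \idealmap = \id_{\ccH^{\circ}}$, the morphism $\idealmap$ is a locally closed immersion (a section of $\sch$ over $\ccH^{\circ}$ is automatically an immersion), so $\dim \mathbf{H}_{\ccH} = \dim \idealmap(\ccH^{\circ}) = \dim \ccH^{\circ} = \dim \ccH$. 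Then I would argue that $\sch$ restricted to $\mathbf{H}_{\ccH}$ lands in $\ccH$ (because $\sch$ is continuous and sends the dense subset $\idealmap(\ccH^{\circ})$ into $\ccH$, which is closed in $\reduced{\usualHilb(X)}$), so $\sch|_{\mathbf{H}_{\ccH}}\colon \mathbf{H}_{\ccH} \to \ccH$ is a morphism of projective schemes whose image is closed, irreducible, and contains the dense subset $\ccH^{\circ}$; therefore it is surjective onto $\ccH$. Any irreducible closed subset of $\reduced{\Hilb_{h_{\ccH}}(S[X])}$ properly containing $\mathbf{H}_{\ccH}$ would map onto a closed subset of $\usualHilb(X)$ of dimension $> \dim \ccH$ containing $\ccH$, impossible since $\ccH$ is itself a component of $\reduced{\usualHilb(X)}$ (here one also uses that $\sch$ has finite fibres over a dense open of $\ccH$ — see below); hence $\mathbf{H}_{\ccH}$ is a component.

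For the birationality statement: I would produce a dense open $\ccH^{\circ\circ} \subset \ccH^{\circ}$ over which $\sch$ is an isomorphism onto its image. On $\ccH^{\circ}$ we already have the section $\idealmap$, so $\sch$ admits a right inverse there, meaning $\sch|_{\idealmap(\ccH^{\circ})}$ is injective with continuous inverse $\idealmap$; since both $\idealmap(\ccH^{\circ})$ and $\ccH^{\circ}$ are reduced and the composite in both directions is the identity on points, $\sch$ and $\idealmap$ are mutually inverse isomorphisms between $\idealmap(\ccH^{\circ})$ and $\ccH^{\circ}$. Thus $\sch\colon \mathbf{H}_{\ccH} \to \ccH$ restricts to an isomorphism over the dense open $\ccH^{\circ} \subset \ccH$, and is therefore birational. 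Projectivity of $\sch|_{\mathbf{H}_{\ccH}}$ is immediate: it is a morphism between closed subschemes of the projective schemes $\Hilb_{h_{\ccH}}(S[X])$ and $\usualHilb_{L,p}(X)$ (with $p$ the Hilbert polynomial determined by $h_{\ccH}$ and a fixed very ample $L$), hence projective.

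The main subtlety I anticipate is verifying that $\mathbf{H}_{\ccH}$ is genuinely a component — i.e. that no strictly larger irreducible closed subset of $\reduced{\Hilb_{h_{\ccH}}(S[X])}$ also maps birationally-or-dominantly into $\ccH$ while being of the same dimension. The clean way around this is the dimension count above: the section $\idealmap$ being an immersion forces $\dim \mathbf{H}_{\ccH} = \dim\ccH$, and any irreducible closed $\mathbf{H}' \supsetneq \mathbf{H}_{\ccH}$ has $\dim \mathbf{H}' > \dim \ccH$; its image $\sch(\mathbf{H}')$ is irreducible, closed, contains $\ccH$, and (since $\ccH$ is a component of the reduced Hilbert scheme of $X$) must equal $\ccH$, so $\sch|_{\mathbf{H}'}$ has fibre dimension $\geq 1$ generically. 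But over the dense open $\ccH^{\circ}$ the fibre of $\sch$ through a point of $\idealmap(\ccH^{\circ})$ is a single reduced point (by the section property and reducedness), and a general point of $\mathbf{H}'$ lies over $\ccH^{\circ}$ while a general point of $\mathbf{H}'$ also lies in the closure of $\idealmap(\ccH^{\circ})$ only if $\mathbf{H}' = \mathbf{H}_{\ccH}$ — so one needs the extra input that a general point of $\mathbf{H}'$ represents a \emph{saturated} ideal, which by Proposition~\ref{prop_zero_scheme_of_ideal}\ref{item_ideal_of_zero_scheme} is uniquely determined by its associated subscheme $\sch([I]) \in \ccH^{\circ}$, forcing it into $\idealmap(\ccH^{\circ})$ and giving a contradiction. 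Openness of the saturated locus (the corollary to Theorem~\ref{thm_saturation_is_open_property}) is what guarantees this general point is saturated, and this is the step I would be most careful about writing out.
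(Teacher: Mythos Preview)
Your approach is essentially the same as the paper's: both take an irreducible component $\mathbf{H}'$ of $\reduced{\Hilb_{h_{\ccH}}(S[X])}$ containing $\mathbf{H}_{\ccH}$, use openness of saturation to get a dense open $U\subset \mathbf{H}'$ of saturated ideals, and then exploit that a saturated ideal is determined by its zero scheme. The paper packages this last step by citing \cite[Cor.~2.8]{jelisiejew_mandziuk_limits_of_saturated_ideals} (which says $\sch$ is a locally closed immersion on the saturated locus) and then runs a dimension chain $\dim \mathbf{H}_{\ccH}\le \dim \mathbf{H}'=\dim U=\dim\sch(U)\le\dim\ccH\le\dim\mathbf{H}_{\ccH}$; you instead argue pointwise via Proposition~\ref{prop_zero_scheme_of_ideal}\ref{item_ideal_of_zero_scheme}, which has the advantage of being self-contained.

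One small step you glide over: to force a general saturated $[I]\in\mathbf{H}'$ into $\idealmap(\ccH^{\circ})$ you need $\sch([I])\in\ccH^{\circ}$, not merely $\sch([I])\in\ccH$. This does hold---since $I=I(Z(I))$ has Hilbert function $h_{\ccH}$, so does $Z(I)$, and $\ccH^{\circ}$ is by definition exactly the locus in $\ccH$ where the Hilbert function equals $h_{\ccH}$---but you should say so explicitly. Also, your first-paragraph sentence ``would map onto a closed subset of $\usualHilb(X)$ of dimension $> \dim \ccH$'' is false as stated (bigger source can have bigger fibres, not bigger image); you correctly fix this in your final paragraph, so just drop the earlier version.
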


\begin{prf}
    The scheme $\mathbf{H}_{\ccH} \subset \Hilb_{h_{\ccH}}(S[X])$ is closed by the definition and irreducible since $\ccH^{\circ}$ is irreducible.
    Naturally, $\sch|_{\mathbf{H}_{\ccH}} \colon\mathbf{H}_{\ccH} \to \ccH$ is a projective morphism, and it admits birational inverse $\idealmap$ by Proposition~\ref{prop_idealmap}.
    Thus the only claim that remains to be proved is that $\mathbf{H}_{\ccH} \subset \Hilb_{h_{\ccH}}(S[X])$ is a component.

    Thus let $\mathbf{H} \subset \Hilb_{h_{\ccH}}(S[X])$ be any irreducible component containing $\mathbf{H}_{\ccH}$.
    We claim that $\mathbf{H} = \mathbf{H}_{\ccH}$.
    Indeed, since $\mathbf{H}_{\ccH}$ contains some saturated ideals, $\mathbf{H}_{\ccH}\subset \mathbf{H}$, and the set of saturated ideals is open (Theorem~\ref{thm_saturation_is_open_property}), there exists an open dense $U\subset \mathbf{H}$ with only saturated ideals.
    Now $\sch|_U$ is a locally closed immersion by \cite[Cor.~2.8]{jelisiejew_mandziuk_limits_of_saturated_ideals}.
    In particular, comparing the dimensions:
    \begin{alignat*}{2}
      \dim \mathbf{H}_{\ccH} & \leqslant \dim \mathbf{H} & \quad &(\text{since } \mathbf{H}_{\ccH}\subset \mathbf{H})\\
      &= \dim U && (\text{since $U$ is open and dense})\\
      &= \dim \sch(U) && (\text{since $\sch|_U$ is an immersion})\\
      &\leqslant \dim \ccH &&(\text{since } \sch(U)\subset \ccH)\\
      &\leqslant \dim \mathbf{H}_{\ccH} &&
      (\text{since $\sch\colon \mathbf{H}_{\ccH}\to \ccH$ is dominant}).
    \end{alignat*}
    So all the above dimensions are equal,
      in particular $\dim \mathbf{H}_{\ccH} = \dim \mathbf{H}$,
      and thus $\mathbf{H}_{\ccH} = \mathbf{H}$
    as claimed.
\end{prf}

\begin{proof}[Proof of Proposition~\ref{prop_distinguished_components}]
   The distinguished irreducible
   component $\mathbf{H}_{\ccH}$
   is defined in Corollary~\ref{cor_saturation_map_is_birational}.
   A general ideal $I\in \mathbf{H}_{\ccH}$
     is the image of $\idealmap([R])$ for some
     $[R]\in \ccH^{\circ}$, hence $I=I_R$ and thus $I$ is saturated by Proposition~\ref{prop_zero_scheme_of_ideal}\ref{item_ideal_of_zero_scheme}, as claimed in the first item the proposition.
   The second item holds by Corollary~\ref{cor_saturation_map_is_birational}.

   The uniqueness follows from Theorem~\ref{thm_saturation_is_open_property}:    suppose $\mathbf{H}'$ is another component of the multigraded
      Hilbert scheme satisfying the two itemised properties.
   Let $U' \subset \mathbf{H}'$ be the open subset of saturated ideals.
   Note that $U'$ is not empty by the first item.
   Then, the birationality from the second item implies
      that the general element of $\ccH$ is in $\sch(\mathbf{H}')$.
   Thus the map $\idealmap$ sends such general element to
      $\mathbf{H}'$ and thus $\mathbf{H}'=\mathbf{H}_{\ccH}$ as claimed.
\end{proof}

\subsection{Linear span map}
\label{sec_linear_span_map}
For a fixed finite dimensional $\kk$-vector space $V$,
let $\Gr(k, V)$ be the Grassmannian of $k$-dimensional linear subspaces of $V$.
Equivalently, $\Gr(k, V)$ parametrises projective linear subspaces of $\PP(V)$.
Thus if $U$ is any base scheme and $\ccE\subset U\times V$ is a vector subbundle of rank $k$, then there exists a regular map $U\to \Gr(k,V)$.
Here the vector bundle plays the role of a family of vector spaces parametrised by $U$.

Each subscheme of the projective space $\PP(V)$ has an associated projective linear subspace of $\PP(V)$ (thus a point in the Grassmannian $\Gr(k, V)$ for some $k$), namely the linear span of $R$, or more precisely, the smallest linear subspace $\linspan{R}\subset \PP V$ such that $R\subset \linspan{R}$ as subscheme.

Unfortunately, in general the assignment $R\mapsto \linspan{R}$ cannot be made into a morphism from a Hilbert scheme to the Grassmannians, as flat degeneration $R_t \stackrel{t\to 0}{\longrightarrow} R_0$ can have lower dimension of $\linspan{R_0}$ than that of  $\linspan{R_t}$ for general $t$ (hence this assignment is not even continuous).
This is an infamous obstruction to efficiently study the secant varieties of subvarieties of projective space.
In the literature there exist essentially two workarounds this obstruction.
The first one is to avoid settings when this obstruction actually  occurs: \cite{nisiabu_jabu_cactus}, \cite{nisiabu_jabu_farnik_cactus_Fujita}, \cite{choi_lacini_park_sheridan_sings_and_syz_of_secant_vars}, or at least prove that it does not affect your case \cite[Prop.~3.1]{ranestad_voisin_VSP_and_divisors_in_the_moduli_of_cubic_fourfolds}, \cite[Thm~9.2, Rem.~9.3]{jelisiejew_landsberg_pal_concise_tensors_of_minimal_border_rank}.
The second one is coming from usage of multigraded Hilbert scheme instead.
Over $\kk=\CC$ this latter approach is accomplished in  \cite{nisiabu_jabu_border_apolarity}.
Here we show that it also generalises to arbitrary $\kk$.

Fix a smooth projective toric variety $X$ and a  divisor $D\in \Pic(X)$.
For any Hilbert function $h \colon \Pic(X) \to \NN$ we can construct a vector subbundle $\ccE_{D} \subset \Hilb_h(S[X]) \times S[X]_D^*$ of rank $h(D)$
by taking the flat (hence locally free) subsheaf $I_{D}\subset \ccO_{\Hilb_h(S[X])} \otimes_{\kk} S[X]_D$ and defining $\ccE_{D}:= (I_{D})^{\perp}$.
Thus by the universal property of the Grassmannian, we obtain a morphism of schemes
$\perpmap_D \colon  \Hilb_h(S[X]) \to  \Gr(h(D), S[X]_D^*)$, and explicitly we have $\perpmap_{D}([J]) = [(J_D)^{\perp}$] for any $[J]\in \Hilb_h(S[X])$.
Below we assume that $D$ is effective, so that the projective space $\PP(S[X]_D^*)$ is non-empty.

\begin{prop}
   \label{prop_linear_span_map}
   Fix a line bundle $D\in \Pic(X)$
      and an irreducible component
      $\ccH\subset \reduced{\usualHilb(X)}$.
   Let $\iota\colon X \dashrightarrow \PP(S[X]_D^*)$
      be the rational map defined by the complete linear system of $D$,
      and let $\reg(\iota) \subset X$ be the open subset where $\iota$ is regular.
   Suppose that at least one of the following conditions holds:
   \begin{enumerate}
    \item
      \label{item_lin_span_very_ample}
    $D$ is very ample, or
    \item
      \label{item_lin_span_zero_dim}
     $\ccH$ parametrises zero dimensional subschemes, or
    \item
      \label{item_lin_span_associated_pts}
    for a general $[R] \in \ccH$, all associated points of $R$ are in $\reg(\iota)$.
   \end{enumerate}
   Then there exists an open dense subset $U\subset \ccH$
   and a morphism
   \[
      \spanmap_D\colon U\to Gr(h_{\ccH}(D), S[X]_D^*)
    \]
     such that for any $[R]\in U(\kk)$ we have
     $\PPof{\spanmap_D([R])}= \left[\linspan{\iota(R\cap \reg(\iota))}\right]$.
     Moreover,
        the rational map
      $\spanmap_D\colon \ccH \dashrightarrow \Gr(h_{\ccH}(D), S[X]_D^*)$
      is resolved by the precomposition with the map
      $\sch$ restricted to $\mathbf{H}_{\ccH}$, which is equal to $(\perpmap_D)|_{\mathbf{H}_{\ccH}}$.
\end{prop}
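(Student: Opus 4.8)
The plan is to build the morphism $\spanmap_D$ on $U := \mathbf{H}_{\ccH}^{\circ}$ (the open locus of $\ccH^{\circ}$ where the construction makes sense) by transporting the already-constructed morphism $\perpmap_D\colon \Hilb_h(S[X]) \to \Gr(h_{\ccH}(D), S[X]_D^*)$ through the birational inverse $\idealmap$ of $\sch$, and then to identify the resulting subspace $\perpmap_D([I(R)])$ with the linear span $\linspan{\iota(R\cap \reg(\iota))}$. Concretely: by Corollary~\ref{cor_saturation_map_is_birational} the map $\sch\colon \mathbf{H}_{\ccH}\to\ccH$ is birational with inverse $\idealmap$ defined on the dense open $\ccH^{\circ}$ of Notation~\ref{notation-generic_Hilbert_function}. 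So set $\spanmap_D := \perpmap_D \circ \idealmap$ on a suitable open dense $U\subset\ccH^{\circ}$; this is visibly a morphism to the Grassmannian, and the last sentence of the proposition — that the rational map $\spanmap_D$ on $\ccH$ is resolved by precomposition with $\sch|_{\mathbf{H}_{\ccH}}$ and equals $(\perpmap_D)|_{\mathbf{H}_{\ccH}}$ — is then immediate from $\sch\circ\idealmap = \id$ and the construction. It remains only to compute the value on a general closed point.

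The computation on points is the heart of the matter. For $[R]\in\ccH^{\circ}$, by construction $\idealmap([R]) = [I(R)]$, so $\spanmap_D([R]) = \perpmap_D([I(R)]) = (I(R)_D)^{\perp}\subset S[X]_D^*$. On the other hand, $\iota\colon X\dashrightarrow\PP(S[X]_D^*)$ is the map attached to the complete linear system $|D|$, so for the open part $R\cap\reg(\iota)$ the span $\linspan{\iota(R\cap\reg(\iota))}$ is, by the standard description of linear span via the homogeneous ideal, cut out by exactly the linear forms on $\PP(S[X]_D^*)$ — i.e. the elements of $S[X]_D$ — that vanish on $\iota(R\cap\reg(\iota))$; pulling back along $\iota$, these are the sections $s\in H^0(X,D) = S[X]_D$ vanishing on $R\cap\reg(\iota)$. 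So the point is to show that $I(R)_D = \set{s\in S[X]_D \mid s|_{R} = 0}$ coincides, for general $[R]\in\ccH$, with $\set{s\in S[X]_D\mid s|_{R\cap\reg(\iota)} = 0}$. One inclusion is trivial. For the other, a section vanishing on $R\cap\reg(\iota)$ must be shown to vanish on all of $R$: here is where the trichotomy of hypotheses enters. Under \ref{item_lin_span_very_ample}, $\reg(\iota) = X$ and there is nothing to prove. Under \ref{item_lin_span_zero_dim}, one argues that for a general zero-dimensional $R$ (in the relevant component) all points of $R$ lie in $\reg(\iota)$ — the complement $X\setminus\reg(\iota)$ is a proper closed subset, so a general configuration avoids it, using Theorem~\ref{thm_structure_of_Hilbert_functions_of_fibres}\ref{item_generic_Hilbert_function_is_open} or a direct dimension count on the family $\ccZ\to\ccH$ to pass from "general point of $X$" to "general $[R]$". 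Under \ref{item_lin_span_associated_pts}, the hypothesis directly says the associated points of a general $R$ lie in $\reg(\iota)$, and since a section of a line bundle on a (reduced? — more carefully: on a scheme $R$) vanishes iff it vanishes at all associated points of $R$, vanishing on the dense open $R\cap\reg(\iota)$ forces vanishing on $R$.

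The main obstacle I expect is twofold and both parts live in the point-computation step. First, one must be careful that $\spanmap_D$ is genuinely defined on a \emph{dense} open of $\ccH$ and that the Hilbert function $h_{\ccH}(D)$ really is the rank of the relevant bundle there — this is handled by shrinking to $\ccH^{\circ}$ and invoking local freeness of $I_D$ as in the proof of Proposition~\ref{prop_idealmap}, together with $\perpmap_D$ having image in $\Gr(h_{\ccH}(D),-)$ on that locus. Second, and more delicate, is the passage from a statement about a general \emph{point} of $X$ being in $\reg(\iota)$ (case \ref{item_lin_span_zero_dim}) to a general \emph{member} $R$ of the family having \emph{all} its points in $\reg(\iota)$: one wants to say that the locus of $[R]\in\ccH$ with $R\cap(X\setminus\reg(\iota))\neq\emptyset$ is a proper closed subset, which follows because $\ccZ\to\ccH$ is proper (Hilbert scheme of a projective $X$) so the image of $\ccZ\cap(\ccH\times(X\setminus\reg(\iota)))$ is closed in $\ccH$, and it is not all of $\ccH$ since a generic zero-dimensional subscheme can be chosen supported away from the proper closed set $X\setminus\reg(\iota)$. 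Apart from these points, the argument is a routine assembly of the morphisms $\idealmap$, $\sch$, $\perpmap_D$ already available from the preceding propositions.
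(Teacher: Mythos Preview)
Your proposal is correct and follows essentially the same approach as the paper: define $\spanmap_D := \perpmap_D \circ \idealmap$ on $\ccH^{\circ}$, then identify $(I(R)_D)^{\perp}$ with the linear span of $\iota(R\cap\reg(\iota))$ by showing that a section vanishing on $R\cap\reg(\iota)$ vanishes on $R$ whenever all associated points of $R$ lie in $\reg(\iota)$. The paper organises this slightly differently---it first observes that hypotheses \ref{item_lin_span_very_ample} and \ref{item_lin_span_zero_dim} each imply \ref{item_lin_span_associated_pts} and then argues once via an explicit primary decomposition $R=R_1\cup\dotsb\cup R_k$ with $R_i=\overline{R_i\cap\reg(\iota)}$---but your case-by-case treatment and your appeal to associated points amount to the same argument, and your properness justification for case \ref{item_lin_span_zero_dim} is if anything more explicit than the paper's.
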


\begin{prf}
  First observe that both \ref{item_lin_span_very_ample} and \ref{item_lin_span_zero_dim} imply
  that~\ref{item_lin_span_associated_pts} holds.
   Indeed,   if $D$ is very ample then $\reg(\iota)=X$
     and \ref{item_lin_span_associated_pts}     is vacuous.
     Similarly,     if $\ccH$ parametrises zero dimensional schemes, then, since $X$ is smooth,
     a general $[R]\in \ccH$ has as the support a collection of general points, which are therefore all in $\reg(\iota)$.
   Thus, in the rest of the proof, we will assume that~\ref{item_lin_span_associated_pts} holds.

   Consider $\ccH^{\circ}$, the open subset as in Notation~\ref{prop_idealmap} and define
   $\spanmap_D:= \perpmap_D \circ \idealmap$.
   Let $[R]\in \ccH$ be a general point. Then
   $[R]\in \ccH^{\circ}$ and all associated points of $R$ are in $\reg(\iota)$.
   We claim that the second assumption guarantees 
   \begin{equation}\label{equ_linear_span_defined_by_ideal}
      \linspan{\iota(R)}
     =\linspan{\overline{\iota(R)}}
     =\PPof{I(R)_D^{\perp}} \subset \PPof{S[X]_D^{*}}.
   \end{equation}
   Indeed, if $\Phi \in I(R)_D$, then $\phi$ considered as a linear form on $\PPof{S[X]_D^{*}}$  must
   vanish identically on $\iota(R)$, hence also on its linear span.
   Conversely, if $R= R_1 \cup \dotsb \cup R_k$ with each $R_i$ a primary scheme,
     then $I(R) =I(R_1)\cap \dotsb \cap I(R_k)$
     and for each $i$, the equality of schemes holds: $R_i=\overline{R_i\cap \reg(\iota)}$.
   Thus, if $\Phi \in S[X]_D$ considered as a linear form on $\PPof{S[X]_D^{*}}$ vanishes on $\iota(R)$,
    it also vanishes on all  $\iota(R_i)$, and thus $\Phi$ vanishes on $R_i\cap \reg(\iota)$
   ---  where $\Phi$ is  considered this time as  an element of $S[X]$.
   Therefore $\Phi \in I(R_i)$ and hence $\Phi \in I(R)$ proving \eqref{equ_linear_span_defined_by_ideal}.

   Thus, with a slight abuse of Notation~\ref{not_parameter_spaces},
   for a general $[R]$ we have
   \[
     \PPof{\spanmap_D([R])}= \PPof{\perpmap_D \circ \idealmap([R])} =
     \PPof{\perpmap_D([I(R)])} = \PPof{I(R)_D^\perp}
     % A \xlongequal{\text{def}} B
   \overset{ \text{by~\eqref{equ_linear_span_defined_by_ideal}}}{=}
     \linspan{\iota(R)},
   \]
   where the first, second and third equalities follow from the definitions of maps
   $\spanmap_D$, $\idealmap$, and $\perpmap_D$ respectively.

   To conclude the proof of the proposition,
   we apply Corollary~\ref{cor_saturation_map_is_birational}.
   The morphism $\sch|_{\mathbf{H}_{\ccH}}$
   is birational and it is the inverse of $\idealmap$,
   hence
   $\spanmap_D\colon \ccH \dashrightarrow \Gr(h_{\ccH}(D), S[X]_D^*)$
      is resolved by the precomposition with
      $\sch|_{\mathbf{H}_{\ccH}}\colon \mathbf{H}_{\ccH} \to \ccH$ as claimed.
\end{prf}

Let $\ccH\subset \usualHilb(X)$ be an irreducible component and fix a very ample line bundle $L\in \Pic(X)$ defining an embedding $\iota\colon X\hookrightarrow \PPof{H^0(L)^*} = \PPof{S[X]_L^*}$ .
For each nonnegative integer $i$
the \emph{Grassmann-relative linear span} of   $\ccH$,
  is defined as
  \[
     \cactus{\ccH,i}{X, L} =
     \overline{\set{
      [E]\mid \PPof{E}\subset \linspan{\iota(R)} \text{ for some } [R]\in \ccH
     }} \subset \Gr(i, S[X]_L^*).
  \]
We will frequently skip $L$ in the notation and write simply $\cactus{\ccH,i}{X}$.
The interesting range of $i$ to consider $\cactus{\ccH,i}{X, L}$ is between $1$ and $h_{\ccH}(L)$,
see Lemma~\ref{lem_interesting_range_for_Grassmann_cactus}.
For the initial case $i=1$ and $\ccH\subset \usualHilb_r(X)$
the relative linear span $\cactus{\ccH,i}{X} \subset \PPof{S[X]_L^*}$
is also defined in \cite[Def.~5.23]{jabu_jelisiejew_finite_schemes_and_secants}
--- see Proposition~\ref{prop_descriptions_of_relative_span} for a comparison of the two definitions.
If $\ccH \subset \usualHilb_r(X)$ is the smoothable component,
  then $\cactus{\ccH,i}{X}$ is the Grassmann-secant variety \cite[Def.~1.1]{landsberg_jabu_ranks_of_tensors}.
More generally, the relative linear span
  of a family of finite schemes (not necessarily flat, and not necessarily over reduced base, but under additional technical assumptions) is defined and discussed in \cite[\S3.2]{jabu_keneshlou_cactus_scheme}.
At the other extreme of the interesting range, for $i=h_{\ccH}(L)$, we have the case that is relevant to the construction of Hilbert scheme.
In fact, if the line bundle $L$ is sufficiently ample, then $\cactus{\ccH,h_{\ccH}(L)}{X} \simeq \ccH$ and the embedding in the Grassmannian is the same as in the construction of the Hilbert scheme.

\begin{lemma}
   \label{lem_interesting_range_for_Grassmann_cactus}
   If $i=0$ then $\cactus{\ccH,i}{X} = \Gr(0, S[X]_L^*) = \Spec \kk$.
   If $i > h_{\ccH}(L)$, then
   $\cactus{\ccH,i}{X} = \emptyset$.
\end{lemma}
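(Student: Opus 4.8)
The plan is to dispose of the two cases separately, reducing each to an elementary observation about dimensions of linear spans inside $\PPof{S[X]_L^*}$. For $i=0$, the Grassmannian $\Gr(0, S[X]_L^*)$ consists of the single linear subspace $E=0$, so it is $\Spec\kk$; and since $\ccH$ is nonempty (being an irreducible component) and $\PPof{0}=\emptyset$ is a subscheme of $\linspan{\iota(R)}$ for every $[R]\in\ccH$, the defining set $\set{[E]\mid \PPof{E}\subset\linspan{\iota(R)}\text{ for some }[R]\in\ccH}$ is already all of $\Gr(0,S[X]_L^*)$, hence so is its closure $\cactus{\ccH,0}{X}$.

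For $i>h_{\ccH}(L)$ I would prove that for \emph{every} $[R]\in\ccH$ — not only a general one — the linear span $\linspan{\iota(R)}\subset\PPof{S[X]_L^*}$ has projective dimension at most $h_{\ccH}(L)-1$. Granting this, no $[E]\in\Gr(i,S[X]_L^*)$ with $\dim_\kk E=i$ can satisfy $\PPof{E}\subset\linspan{\iota(R)}$, since that would force $i-1\le h_{\ccH}(L)-1$, contradicting $i>h_{\ccH}(L)$; so the defining set is empty and $\cactus{\ccH,i}{X}=\overline{\emptyset}=\emptyset$. To establish the dimension bound, first note that since $L$ is very ample, $\iota$ is a closed embedding and $\linspan{\iota(R)}$ is cut out precisely by the degree-$L$ part of the homogeneous ideal of $R$ (this is the very-ample instance of \eqref{equ_linear_span_defined_by_ideal}, valid for all $[R]$), so $\linspan{\iota(R)}=\PPof{I(R)_L^{\perp}}$ and $\dim_\kk I(R)_L^{\perp}=\dim_\kk S[X]_L-\dim_\kk I(R)_L=\dim_\kk(S[X]/I(R))_L=h_R(L)$. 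It then remains to check $h_R(L)\le h_{\ccH}(L)$ for all $[R]\in\ccH$, which follows from Theorem~\ref{thm_structure_of_Hilbert_functions_of_fibres}\ref{item_Hilbert_function_stratification_is_locally_closed}--\ref{item_generic_Hilbert_function_is_open} applied to the natural family $\ccZ\subset X\times\ccH$: the Hilbert function equals $h_{\ccH}$ on the dense open stratum $\ccH^{\circ}$, and any other stratum $U_j$ lies in $\overline{\ccH^{\circ}}=\ccH$, whence $h_j\le h_{\ccH}$.

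The only slightly delicate point is this last passage from the generic member of $\ccH$ to an arbitrary one: a priori a special (for instance non-reduced, or degenerate) $[R]\in\ccH$ could conceivably have a larger degree-$L$ quotient and hence a larger span, but the semicontinuity of Hilbert functions packaged in Theorem~\ref{thm_structure_of_Hilbert_functions_of_fibres} excludes this. Everything else — the identification $\Gr(0,S[X]_L^*)=\Spec\kk$, the fact that the empty projective subspace sits inside every span, and the identity $\dim\linspan{\iota(R)}=h_R(L)-1$ for very ample $L$ — is routine bookkeeping.
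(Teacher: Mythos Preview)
Your proposal is correct and follows essentially the same approach as the paper's own proof: both cases are handled by the elementary dimension bookkeeping you describe, and the key inequality $h_R(L)\le h_{\ccH}(L)$ for every $[R]\in\ccH$ is obtained exactly as you do, via Theorem~\ref{thm_structure_of_Hilbert_functions_of_fibres}\ref{item_Hilbert_function_stratification_is_locally_closed}--\ref{item_generic_Hilbert_function_is_open}. Your write-up is slightly more explicit about the identity $\dim\linspan{\iota(R)}=h_R(L)-1$ than the paper's, but otherwise the arguments coincide.
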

\begin{prf}
  If $i=0$, then there is unique $[E]\in \Gr(0, S[X]_L^*)$, namely $E=0$, and $\Gr(0, S[X]_L^*)$ is a single $\kk$-point.
  Since for any $[R] \in \ccH$ the linear span of $R \subset \PPof{S[X]_L^*}$ contains the empty set $\PP(E)$, it follows that $\cactus{\ccH,i}{X} = \Gr(0, S[X]_L^*)$.

  If $i>  h_{\ccH}(L)$, then
    for any $[R]\in \ccH$
    we have $\dim\linspan{R}= h_R(L) -1 \leqslant h_{\ccH}(L)-1$ by Theorem~\ref{thm_structure_of_Hilbert_functions_of_fibres}\ref{item_Hilbert_function_stratification_is_locally_closed} and \ref{item_generic_Hilbert_function_is_open}.
   Since for any $E$ of dimension $i$, $\dim \PP(E)\geqslant h_{\ccH}(L)$, by the dimension count we cannot have $E \subset \linspan{R}$, thus    $\cactus{\ccH,i}{X} = \emptyset$ as claimed.
\end{prf}

\subsection{Grassmannian bundles}
\label{sec_grassmannian_bundles}

In order to discuss several other definitions of relative linear span we first review notation about Grassmannian bundles.
If $Y$ is a scheme, $\ccE$ is a vector bundle on $Y$, and $0\leqslant i\leqslant \rk \ccE$ is an integer,
then by $\relGr(i, \ccE)$
we denote the total space of the Grassmannian bundle:
over $y\in Y$ we consider the fibre $\Gr(i,\ccE_y)$ and we glue these fibres appropriately.
More precisely, we take $\PPof{\Wedge{i}\ccE}$ and define $\relGr(i, \ccE) \subset \PPof{\Wedge{i}\ccE}$ as the closed subscheme using Pl{\"u}cker relations locally with respect to $Y$.
In particular, in the special case $i=1$,
  we have $\relGr(1, \ccE) = \PPof{\ccE}$.

Now consider a vector space $V$ and
  $\ccS \to \Gr(k, V)$ the universal subbundle, that is a subbundle $\ccS $ of the trivial bundle $\Gr(k, V) \times V$
  with $\ccS=\set{([E], v) \mid v \in E}$.
We pick a positive integer $i\leqslant k$.
Then $\relGr(i, \ccS)$ as a variety is in fact isomorphic to the partial flag variety $\Fl(i,k; V)$ and admits two projective morphisms:
\[
\begin{tikzcd}
\relGr(i, \ccS) \arrow[d, "\pi"] \arrow[r, "\xi"] & \Gr(i, V) \\
 \Gr(k, V).
\end{tikzcd}
\]

Now let $X$, $L$, $\ccH\subset \usualHilb(X)$, $\mathbf{H}_{\ccH}$, $\sch\colon \mathbf{H}_{\ccH} \to \ccH$, $\spanmap_L\colon \ccH\to \Gr(h_{\ccH}(L) , S[X]^*_L)$ and
$\perpmap_L\colon \mathbf{H}_{\ccH}\to \Gr(h_{\ccH}(L) , S[X]^*_L)$ be as  in Subsections~\ref{sec_Hilb_schemes} and \ref{sec_linear_span_map}.
We pull back the diagram and obtain:
\begin{equation}
\label{equ_incidence_diagram}
\begin{tikzcd}
&&  & \Gr(i, S[X]^*_L) \\
\relGr(i,\ccS_{\mathbf{H}})\arrow[r, dashed] \arrow[d]\arrow[rrru, bend left=10, "\xi_{\mathbf{H}}"]&\relGr(i,\ccS_{\ccH})\arrow[r] \arrow[d]\arrow[rru, "\xi_{\ccH}"]&  \relGr(i, \ccS) \arrow[d, "\pi"] \arrow[ru, "\xi"'] & \\
\mathbf{H}_{\ccH} \arrow[r, "\sch"]\arrow[rr, bend right=20, "\perpmap_L"']&\ccH\arrow[r, dashed, "\spanmap_L"]& \Gr(h_{\ccH}(L), S[X]^*_L).
\end{tikzcd}
\end{equation}
Here $\ccS_{\ccH}$ is the vector bundle over an open dense subset of $\ccH$ obtained as a pull back of $\ccS$ along $\spanmap_L$,
$ \ccS_{\mathbf{H}}$ is the vector bundle over $\mathbf{H}_{\ccH}$ obtained as a pull back of $\ccS$ along $\perpmap_L$,
and $\relGr(i,\ccS_{\ccH})$, $\relGr(i,\ccS_{\mathbf{H}})$ are the corresponding Grassmannian bundles, while $\xi_{\ccH}$ and $\xi_{\mathbf{H}}$ are the appropriate compositions.

\begin{prop}
   \label{prop_descriptions_of_relative_span}
  Let $\eta\in \ccH$ be the generic point,
   and $\relGr(i,\ccS_{\ccH})_{\eta}$ the generic fibre.
  Then
  \[
   \cactus{\ccH}{X} = \overline{\xi_{\ccH}(\relGr(i,\ccS_{\ccH})_{\eta})} = \xi_{\mathbf{H}} (\relGr(i,\ccS_{\mathbf{H}})).
  \]
\end{prop}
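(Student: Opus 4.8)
The plan is to prove the two equalities
$\cactus{\ccH}{X} = \overline{\xi_{\ccH}(\relGr(i,\ccS_{\ccH})_{\eta})}$ and $\overline{\xi_{\ccH}(\relGr(i,\ccS_{\ccH})_{\eta})} = \xi_{\mathbf{H}}(\relGr(i,\ccS_{\mathbf{H}}))$ separately, using the incidence diagram \eqref{equ_incidence_diagram} as the organising device. First I would unwind definitions. By definition, $\cactus{\ccH}{X}$ is the closure of the set of $[E]$ with $\PP(E)\subset \linspan{\iota(R)}$ for some $[R]\in\ccH$. After restricting to the open dense locus $\ccH^\circ$ where the Hilbert function is constant (Theorem~\ref{thm_structure_of_Hilbert_functions_of_fibres}\ref{item_generic_Hilbert_function_is_open}) and where by Proposition~\ref{prop_linear_span_map} we have $\spanmap_L([R])$ representing $\linspan{\iota(R)}$, the fibre of $\relGr(i,\ccS_{\ccH})$ over $[R]$ is exactly $\Gr(i,\linspan{\iota(R)}^{\wedge})$ — i.e. the set of $i$-dimensional subspaces $E$ with $\PP(E)\subset\linspan{\iota(R)}$ — and $\xi_{\ccH}$ sends such a point to $[E]\in\Gr(i,S[X]_L^*)$. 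Hence set-theoretically $\xi_{\ccH}(\relGr(i,\ccS_{\ccH})|_{\ccH^\circ})$ is precisely the defining set of $\cactus{\ccH}{X}$ before taking closure, giving $\cactus{\ccH}{X}=\overline{\xi_{\ccH}(\relGr(i,\ccS_{\ccH})|_{\ccH^\circ})}$.

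The step that replaces $\ccH^\circ$ (or any dense open) by the single generic point $\eta$ is where Lemma~\ref{lem_flat_family_determined_by_generic_fibre} enters: the Grassmannian bundle $\relGr(i,\ccS_{\ccH})\to\ccH$ (over the open locus where $\ccS_{\ccH}$ is defined) is flat with integral base $\ccH$, so its scheme-theoretic closure equals the closure of its generic fibre $\relGr(i,\ccS_{\ccH})_\eta$. Applying the continuous (in fact projective) map $\xi_{\ccH}$ and taking closures, $\overline{\xi_{\ccH}(\relGr(i,\ccS_{\ccH})|_{\ccH^\circ})}=\overline{\xi_{\ccH}(\relGr(i,\ccS_{\ccH})_\eta)}$; combined with the previous paragraph this yields the first equality. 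Here I should be a little careful that $\xi_{\ccH}$ is only a rational map on all of $\ccH$ but is a genuine morphism on the dense open where everything is defined, which is all that is used.

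For the second equality, I would use that $\sch\colon\mathbf{H}_{\ccH}\to\ccH$ is a projective birational morphism (Corollary~\ref{cor_saturation_map_is_birational}) with inverse $\idealmap$ on the dense open $\ccH^\circ$, and that $\perpmap_L|_{\mathbf{H}_{\ccH}} = \spanmap_L\circ\sch$ resolves $\spanmap_L$ (Proposition~\ref{prop_linear_span_map}). Thus $\relGr(i,\ccS_{\mathbf{H}})$ is the pullback of $\relGr(i,\ccS_{\ccH})$ along $\sch$, and $\xi_{\mathbf{H}}=\xi_{\ccH}\circ(\text{this pullback map})$. Because $\sch$ is projective and surjective, $\xi_{\mathbf{H}}(\relGr(i,\ccS_{\mathbf{H}}))$ is a closed subset of $\Gr(i,S[X]_L^*)$ containing the dense subset $\xi_{\ccH}(\relGr(i,\ccS_{\ccH})|_{\ccH^\circ})$ of $\cactus{\ccH}{X}$, hence contains $\cactus{\ccH}{X}$; conversely every point of $\xi_{\mathbf{H}}(\relGr(i,\ccS_{\mathbf{H}}))$ maps, via $\sch$, into $\ccH$ and satisfies the incidence condition $\PP(E)\subset\linspan{\iota(R)}$ for the corresponding $R$ (using \eqref{equ_linear_span_defined_by_ideal} to identify $\perpmap_L$-fibres with linear spans, valid as the hypotheses of Proposition~\ref{prop_linear_span_map} are implicitly in force), so it lies in $\cactus{\ccH}{X}$. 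This gives $\xi_{\mathbf{H}}(\relGr(i,\ccS_{\mathbf{H}}))=\cactus{\ccH}{X}$, completing the proof.

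The main obstacle I anticipate is the bookkeeping around domains of definition: $\spanmap_L$ is only a rational map on $\ccH$, $\perpmap_L$ is defined on all of $\mathbf{H}_{\ccH}$, and $\ccS_{\ccH}$ lives only over an open subset, so care is needed to check that the pullback $\relGr(i,\ccS_{\mathbf{H}})$ is genuinely the closure (i.e. the whole Grassmannian bundle over $\mathbf{H}_{\ccH}$ with respect to the bundle $\ccS_{\mathbf{H}}=\perpmap_L^*\ccS$, which \emph{is} everywhere defined) and that $\xi_{\mathbf{H}}$ is a morphism on all of it — this is where the role of $\mathbf{H}_{\ccH}$ resolving $\spanmap_L$ is essential and must be invoked precisely. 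The other genuinely substantive point is the appeal to Lemma~\ref{lem_flat_family_determined_by_generic_fibre}, which requires checking flatness of the Grassmannian bundle over the integral base $\ccH$; this is standard since Grassmannian bundles of vector bundles are flat (even smooth) over the base.
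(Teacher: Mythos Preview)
Your overall structure is close to the paper's, but there is a genuine gap in the ``conversely'' direction of your third paragraph. You claim that every point $([I],[E])\in\relGr(i,\ccS_{\mathbf{H}})$ satisfies $\PP(E)\subset\linspan{\iota(R)}$ for $R=\sch(I)$, invoking \eqref{equ_linear_span_defined_by_ideal}. But \eqref{equ_linear_span_defined_by_ideal} asserts $\linspan{\iota(R)}=\PP(I(R)_L^{\perp})$, i.e.\ it identifies the linear span with the perp of the \emph{saturated} ideal. For an arbitrary $[I]\in\mathbf{H}_{\ccH}$ you only know $I\subset I(R)$, hence $I_L^{\perp}\supset I(R)_L^{\perp}$, and this containment is strict precisely when $h_R(L)<h_{\ccH}(L)$. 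In that case there are $E\subset I_L^{\perp}$ with $\PP(E)\not\subset\linspan{\iota(R)}$, so your argument does not place such $[E]$ in $\cactus{\ccH}{X}$. The paper fixes this by arguing only for a \emph{general} $[E]\in\xi_{\mathbf{H}}(\relGr(i,\ccS_{\mathbf{H}}))$: then one may choose the preimage $[I]$ in the open locus of saturated ideals (Theorem~\ref{thm_saturation_is_open_property}), where $I_L^{\perp}=I(R)_L^{\perp}$ and the identification goes through; closure then gives the full inclusion.

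A related, smaller issue is your first paragraph's assertion that $\xi_{\ccH}(\relGr(i,\ccS_{\ccH})|_{\ccH^{\circ}})$ is ``precisely the defining set'' of $\cactus{\ccH}{X}$ before closure. The definition of $\cactus{\ccH,i}{X}$ ranges over \emph{all} $[R]\in\ccH$, not just $\ccH^{\circ}$, so you only get one inclusion for free; the other (that points coming from $[R]\notin\ccH^{\circ}$ are already in the closure) is exactly what needs proving. The paper avoids this by showing the inclusion $\cactus{\ccH}{X}\subset\xi_{\mathbf{H}}(\relGr(i,\ccS_{\mathbf{H}}))$ directly via surjectivity of $\sch$ and the containment $\linspan{R}\subset\PP(I_L^{\perp})$, and then handling the reverse inclusion with the generality argument above.
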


\begin{prf}
  Since $\sch$ is a birational morphism, the generic point of
  $\mathbf{H}_{\ccH}$ is equal to $\sch^{-1}(\eta)$ and it is mapped isomorphically to $\eta$.
  In particular, $\overline{\xi_{\ccH}(\relGr(i,\ccS_{\ccH})_{\eta})}= \overline{\xi_{\mathbf{H}}(\relGr(i,\ccS_{\mathbf{H}})_{\sch^{-1}(\eta)})}$
  and the latter is
  equal to
  $\xi_{\mathbf{H}} (\relGr(i,\ccS_{\mathbf{H}}))$ by Lemma~\ref{lem_flat_family_determined_by_generic_fibre} since $\relGr(i,\ccS_{\mathbf{H}})\to \mathbf{H}_{\ccH}$ is flat.

  Take any $[R]\in \ccH$, and take $[E] \in \Gr(i, V)$ such that $\PP(E)\subset \linspan{R}$.
  Since $\sch$ is surjective,
    there exists $[I]\in \mathbf{H}_{\ccH}$ such that $\sch([I]) = [R]$.
  We have $\linspan{R} \subset \PPof{(I_L)^{\perp}}$,
  thus $E\subset (I_L)^{\perp}$
  and therefore $[E]\in  \xi_{\mathbf{H}} (\relGr(i,\ccS_{\mathbf{H}}))$.
  This proves that $\cactus{\ccH}{X} \subset \xi_{\mathbf{H}} (\relGr(i,\ccS_{\mathbf{H}}))$ since the latter is closed in $\Gr(i, S[X]^*_L)$.

  Finally suppose $[E]\in \xi_{\mathbf{H}} (\relGr(i,\ccS_{\mathbf{H}}))$ is a general point.
  Then there exists $[I]\in \mathbf{H}_{\ccH}$  such that $E\subset I_L^{\perp}$.
  Moreover, since $[E]$ is general,
  by Theorem~\ref{thm_saturation_is_open_property})
  we may assume that $I$ is saturated.
 Then $\sch$ is an isomorphism near $[I]$
    and $I_L^{\perp} = \linspan{R}$, where $[R] = \sch([I])$, see Proposition~\ref{prop_linear_span_map}.
  Thus $E\subset\linspan{R}$ and $[E]\in \cactus{\ccH}{X}$ showing the final inclusion
  $\xi_{\mathbf{H}} (\relGr(i,\ccS_{\mathbf{H}})) \subset \cactus{\ccH}{X}$ and proving the proposition.
\end{prf}

Let us stress that the equality
$\cactus{\ccH}{X, L} = \overline{\xi_{\ccH}(\relGr(i,\ccS_{\ccH})_{\eta})}$ in
Proposition~\ref{prop_descriptions_of_relative_span}, when  $i=1$, is identical to
\cite[Def.~5.23]{jabu_jelisiejew_finite_schemes_and_secants},
which shows compatibility of the two definitions.
Moreover, $\cactus{\ccH}{X} = \xi_{\mathbf{H}} (\relGr(i,\ccS_{\mathbf{H}}))$ does not involve any closure:
in particular, the map $\xi_{\mathbf{H}}\colon \relGr(i,\ccS_{\mathbf{H}})\to \cactus{\ccH}{X}$ is projective and surjective,
which is critical for  applications.

\subsection{Cacti in the literature}
The Grassmann relative linear span introduced in this section is primarily important for
the components $\ccH$ of the Hilbert scheme of points $\usualHilb_r(X)$.
These components are used in Definition~\ref{def_grassmann_cactus_variety} of the Grassmann cactus varieties.
In this subsection we compare our definitions to some of the other occurences in the references.
This is not critical for the rest of the article, but one of the examples serves as a motivation to consider more general $\ccH\subset \usualHilb(X)$ than just components of finite schemes.

\begin{defin}
\label{def_grassmann_cactus_variety}
The \emph{Grassmann cactus variety} $\cactus{r,i}{X}$ is the union of $\cactus{\ccH,i}{X}$ over all components $\ccH$ of $\usualHilb_r(X)$.
\end{defin}

When $i=1$, then $\cactus{r,1}{X}=\cactus{r}{X}$ and it is  called the \emph{cactus variety}.
Cactus varieties are defined as a generalisation of secant varieties.
In \cite[\S5.6--5.7]{jabu_jelisiejew_finite_schemes_and_secants} we define and discuss relative linear spans of components of Hilbert scheme,
and these are used to define both cactus and secant varieties.
On the other hand Grassmann secant varieties are very useful in the context of tensors thanks to
\cite[Cor.~3.6(iii)]{landsberg_jabu_ranks_of_tensors}.

The notion of Grassmann cactus varieties have already been introduced
in \cite{galazka_mandziuk_rupniewski_distinguishing}.
In fact, in their work the authors also use Grassmann-relative linear spans of the unique nonsmoothable component
$\ccH\subset \usualHilb_{8}(\PP^n)$ for $n\geqslant 4$ and of the unique non-smoothable component containing
Gorenstein schemes $\ccH\subset \usualHilb_{14}(\PP^n)$ (for $n\geqslant 6$).
In fact, in their arguments the authors exploit an early draft of this article.

A fundamental property of Grassmann cactus variety is proven in \cite{nisiabu_jabu_galazka_Grassmann_cactus_and_socle}.

As another example,
\cite[Lem~2.4]{ranestad_voisin_VSP_and_divisors_in_the_moduli_of_cubic_fourfolds}
characterises rank $10$ cubics in $6$ variables that are associated to $K3$ surfaces which are complete intersections of
the Grassmannian $\Gr(2, 6) \in \PP^{14}$ with a $\PP^8$.
The characterisation can be reinterpreted as a membership in a relative linear span of Hilbert scheme of quartic surface scrolls in $\PP^5$.

In \cite{landsberg_jabu_ranks_of_tensors} Grassmann secant variety of $X$ is exploited in connection to
secant variety to a product $\PP^{n}\times X$.

\section{Apolarity and border cactus decompositions}

In this section we review basics of multigraded apolarity, define the border cactus decompositions and prove the main results of the article.

\subsection{Apolarity}\label{sec_apolarity}
Let $S[X] = \bigoplus_{D\in \Pic X} H^0(D)$ be the Cox ring of the toric variety in question and let $\widetilde{S}[X] = \bigoplus_{D\in \Pic X} H^0(D)^*$ be the graded dual of $S[X]$.
The apolarity action of $S[X]$ on $\widetilde{S}[X]$ is given by
the contractions $\hook$, as in \cite[\S3.1]{jelisiejew_PhD},
\cite[Equation (1.1) and \S4]{galazka_mgr_publ}, \cite[\S3.1]{nisiabu_jabu_border_apolarity}, and \cite[\S5.1]{jabu_keneshlou_cactus_scheme}.

For a linear subspace $E\subset \widetilde{S}[X]$ its annihilator is the ideal
\[
 \Ann(E) = \set{\Theta \in S[X] \mid \forall_{F\in E} \ \Theta \hook F = 0  } \subset S[X].
\]
If $\dim E= 1$, so that $E$ is spanned by a single $F\in \widetilde{S}[X]$ (or $F\in \PPof{\widetilde{S}[X]_L}$ for some $L$), then we simply write $\Ann(F) = \Ann(E)$.
Note that if $\mathfrak{E}$ is a basis od $E$ as a $\kk$-vector space, then
\[
   \Ann(E) = \bigcap_{F\in E} \Ann(F) = \bigcap_{F\in \mathfrak{E}} \Ann(F).
\]

In this article we focus only on homogeneous case, so that the only interesting case is $E\subset \widetilde{S}[X]_L = H^0(L)^*$ for
a fixed line bundle $L\in \Pic X$. In particular, in this case $\dim E < \infty$, and $\Ann(E)$ is a homogeneous ideal
--- as always ---  with respect to the grading by $\Pic X$.

The standard apolarity type observations are the following lemmas.
\begin{lemma} \label{lem_annihilator_and_perp}
  For any $E\subset H^0(X,L)^*$
    we have $\Ann(E)_{L} = E^{\perp} \subset H^0(X,L)$.
\end{lemma}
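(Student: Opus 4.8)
The statement $\Ann(E)_L = E^\perp$ is essentially unwinding definitions, so the plan is to prove the two inclusions by direct computation with the apolarity pairing, using that the apolarity action restricted to degrees where it lands in degree $0$ is exactly the duality pairing between $H^0(X,L)$ and $H^0(X,L)^*$. First I would fix $\Theta \in S[X]_L = H^0(X,L)$ and recall what $\Theta \hook F$ means for $F \in \widetilde{S}[X]_L = H^0(X,L)^*$: since $\deg \Theta = \deg F = L$, the contraction $\Theta \hook F$ lands in $\widetilde{S}[X]_0 = H^0(X,\ccO_X)^* \simeq \kk$, and under this identification $\Theta \hook F$ is precisely the value $\langle \Theta, F\rangle$ of the natural pairing between $H^0(X,L)$ and its dual. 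This is the one ``input'' fact I would cite from Section~\ref{sec_apolarity} (or from the references for the definition of $\hook$); in coordinates on the Cox ring it is the obvious statement that $\alpha^I \hook x^{(I)} = 1 = x^{(0)}$ and $\alpha^I \hook x^{(J)} = 0$ for $J \neq I$ when $|I| = |J|$ in the relevant multidegree.

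Granting that, the argument is symmetric. For ``$\subseteq$'': if $\Theta \in \Ann(E)_L$, then by definition $\Theta \hook F = 0$ for every $F \in E$, which by the identification above says $\langle \Theta, F\rangle = 0$ for all $F \in E$, i.e. $\Theta \in E^\perp \subset H^0(X,L)$. For ``$\supseteq$'': if $\Theta \in E^\perp$, then $\langle \Theta, F\rangle = 0$ for all $F \in E$, hence $\Theta \hook F = 0$ in degree $0$ for all $F\in E$; since $\Theta$ is homogeneous of degree $L$ and $E \subset \widetilde{S}[X]_L$, the only nonzero component of $\Theta \hook F$ could be in degree $0$, so $\Theta \hook F = 0$ outright, giving $\Theta \in \Ann(E)$, and $\Theta$ has degree $L$ so $\Theta \in \Ann(E)_L$. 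One can also phrase this more slickly: $\Ann(E) = \bigcap_{F\in E}\Ann(F)$ and taking degree-$L$ parts commutes with finite intersections, so it suffices to treat $\dim E = 1$, i.e. $E = \langle F\rangle$, and then $\Ann(F)_L = \{\Theta : \langle\Theta,F\rangle = 0\} = \langle F\rangle^\perp$.

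There is no real obstacle here; the only point requiring a sentence of care is the claim that $\Theta\hook F$ for $\Theta\in S[X]_L$, $F\in\widetilde S[X]_L$ has no components outside degree $0$ — this is immediate from the definition of the $\Pic(X)$-grading on $\widetilde S[X]$ together with the fact that $\hook$ shifts degrees by $-L$, so it is legitimate to say $\Theta\hook F=0$ iff its degree-$0$ component vanishes. Accordingly I expect the write-up to be just a few lines: reduce to $\dim E=1$ (or not, it is equally short to do it directly), identify the degree-$0$ contraction with the evaluation pairing, and read off both inclusions.
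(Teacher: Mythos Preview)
Your argument is correct. The paper does not actually supply a proof of this lemma: it is listed among ``the standard apolarity type observations'' and left unproved, so there is nothing to compare against beyond noting that your unwinding of the definitions is exactly what is implicitly intended.
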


\begin{lemma}\label{lem_ideals_contained_in_annihilators}
   Suppose $E\subset H^0(X,L)^*$ is a linear subspace and
   $I \subset S[X]$ is a homogeneous ideal.
%    Then:
%    \begin{itemize}
%     \item
    If $I_L \subset \Ann(E)_L$, then $I \subset \Ann(E)$.
%     \item Assume $D_1, D_2\in \Pic(X)$ are such that $D_2 - D_1$ and $L-D_2$ are effective
%     (so that $L\geqslant D_2 \geqslant D_1$).
%    If $I_{D_2} \subset \Ann(E)_{D_2}$,
%     then $I_{D_1} \subset \Ann_{D_1}$.
%    \end{itemize}
\end{lemma}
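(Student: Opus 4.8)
The plan is to reduce the claim $I \subset \Ann(E)$ to the graded pieces $I_D \subset \Ann(E)_D$ for every $D\in\Pic X$, and then exploit that $\Ann(E)$ behaves well with respect to multiplication. First I would observe that $\Ann(E) = \bigcap_{F\in E}\Ann(F)$, so it suffices to fix a single $F\in E$ and show $I\subset \Ann(F)$; moreover, since $I$ is homogeneous and generated by homogeneous elements, it is enough to show that for every homogeneous $\Theta\in I$, say of degree $D$, we have $\Theta\hook F = 0$. The key structural fact about the apolarity action is the adjunction-type identity: for homogeneous $\Theta\in S[X]_D$ and $\Psi\in S[X]_{D'}$, and $F\in H^0(X,L)^*$, the pairing of $(\Theta\Psi)\hook F$ with an appropriate section is controlled by $\Psi\hook(\Theta\hook F)$, and in particular $\Theta\hook F = 0$ in $H^0(X, L-D)^*$ if and only if the pairing $\langle \Theta\hook F,\ s\rangle = \langle F,\ \Theta s\rangle$ vanishes for all $s\in S[X]_{L-D}$, where $\Theta s\in S[X]_L$.

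With this in hand, here is the core argument. Take a homogeneous $\Theta\in I_D$ and an arbitrary $s\in S[X]_{L-D}$. Then $\Theta s\in I_L$ because $I$ is an ideal; by hypothesis $I_L\subset \Ann(E)_L$, and by Lemma~\ref{lem_annihilator_and_perp} this says $\Theta s\in E^{\perp}$, i.e. $\langle F, \Theta s\rangle = 0$ for all $F\in E$. By the adjunction identity above, $\langle \Theta\hook F,\ s\rangle = \langle F,\ \Theta s\rangle = 0$. Since $s\in S[X]_{L-D}$ was arbitrary and the pairing between $H^0(X,L-D)$ and its dual is perfect, we conclude $\Theta\hook F = 0$. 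As $F\in E$ was arbitrary, $\Theta\in \Ann(E)$. Finally, since $I$ is homogeneous, it is spanned by such elements $\Theta$, so $I\subset \Ann(E)$.

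I should be slightly careful about one degenerate situation: if $L-D$ is not effective (or more precisely if $H^0(X, L-D) = 0$), then $S[X]_{L-D} = 0$, so there is nothing to pair against; but in that case $\Theta\hook F$ lands in $H^0(X, L-D)^* = 0$ anyway, so $\Theta\hook F = 0$ trivially. The only genuine point that needs care — and the one I would expect to be the main (mild) obstacle — is pinning down the precise adjunction formula $\langle \Theta\hook F,\ s\rangle = \langle F,\ \Theta s\rangle$ in the multigraded Cox-ring setting; this is the defining property of the contraction action (it is exactly how $\hook$ is set up via the perfect pairings $H^0(X,D)\times H^0(X,D)^*\to\kk$ together with multiplication $S[X]_D\otimes S[X]_{E}\to S[X]_{D+E}$), and the references cited in \S\ref{sec_apolarity} supply it, so it is a matter of quoting the setup rather than proving something new. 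Everything else is a formal manipulation with graded pieces and the ideal property.
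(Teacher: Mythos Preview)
Your argument is correct and is essentially the same as the paper's: both multiply $\Theta\in I_D$ into degree $L$ using $s\in S[X]_{L-D}$, use $I_L\subset \Ann(E)_L$, and then invoke perfectness of the degree-$(L-D)$ pairing to conclude $\Theta\hook F=0$. The only cosmetic difference is that you phrase the key step via the adjunction identity $\langle \Theta\hook F,\, s\rangle=\langle F,\, \Theta s\rangle$, whereas the paper writes the equivalent module identity $(s\Theta)\hook F = s\hook(\Theta\hook F)$ and states the nondegeneracy as ``$S[X]_{D'}\hook G\equiv 0\Rightarrow G=0$''.
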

\begin{prf}
  Note that for any degree $D'\in \Pic X$:
  \begin{equation}\label{equ_if_hook_with_whole_grading_zero_then_G_zero}
     \text{if }
     G\in \widetilde{S}[X]_{D'}
   \text{ is such that } S[X]_{D'}\hook G \equiv 0, \text{ then } G = 0.
  \end{equation}
%   To we prove the first item,
%   if $I_L \subset \Ann(E)_L$,  then $I \subset \Ann(E)$
  Since both $I$ and $\Ann(E)$ are homogeneous ideals,
    it is enough to check the claim in each degree
    $D\in \Pic X $.
  Pick $\Phi\in I_{D}$, and thus $S[X]_{L-D} \cdot \Phi \subset I_{L} \subset \Ann(E)_{L}$ by the assumption of the lemma.
   Hence for all $F\in E$,
   $S[X]_{L-D} \hook (\Phi \hook F) =0$,
   and by \eqref{equ_if_hook_with_whole_grading_zero_then_G_zero}   $\Phi \hook F =0$, that is $\Phi \in \Ann(F)$.
   Intersecting over all $F$, we get the claim:
   $\Phi\in \bigcap_{F\in E} \Ann(F) = \Ann (E)$.
\end{prf}

Although we do not need it explicitly here, the motivation for the border apolarity techniques is the following (standard) apolarity.

\begin{prop}
   \label{prop_multigraded_apolarity}
   Suppose $X$ is a smooth toric projective variety, $L$ is a very ample line bundle, and consider $X$ embedded into $\PPof{\widetilde{S}_L}$.
   Fix a linear subspace $E\subset \widetilde{S}_L$.
   Then for any closed subscheme $R\subset X$, we have the following equivalence:
   \[
     E\subset \linspan{R} \iff I(R)\subset \Ann(E).
   \]
\end{prop}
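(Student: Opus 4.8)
The plan is to prove the equivalence by reducing it, via the ``degree $L$'' slice, to Lemma~\ref{lem_annihilator_and_perp} together with Lemma~\ref{lem_ideals_contained_in_annihilators}, and to handle the geometric side using the definition of $\linspan{R}$ as the linear span cut out by the degree-$1$ part of the ideal in the embedding given by $|L|$. Since $X$ is embedded into $\PPof{\widetilde{S}_L}$ by the complete linear system of $L$, the degree-$1$ forms on this projective space are exactly elements of $H^0(X,L) = S[X]_L$, and $\linspan{R}$ is by definition $\PPof{(I(R)_L)^\perp}$, the projectivisation of the annihilator in $\widetilde{S}_L$ of all sections of $L$ vanishing on $R$. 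Likewise, a linear subspace $E\subset\widetilde{S}_L$ is contained in $\linspan R$ as a linear subspace precisely when $E\subset (I(R)_L)^\perp$, equivalently when $I(R)_L\subset E^\perp$.

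First I would spell out the ``easy'' direction $(\Leftarrow)$: if $I(R)\subset\Ann(E)$, then in particular $I(R)_L\subset\Ann(E)_L$, and by Lemma~\ref{lem_annihilator_and_perp} we have $\Ann(E)_L=E^\perp$, so $I(R)_L\subset E^\perp$, which by the preceding paragraph says exactly $E\subset\linspan R$. For the direction $(\Rightarrow)$: assume $E\subset\linspan R$. Then $I(R)_L\subset E^\perp=\Ann(E)_L$ by Lemma~\ref{lem_annihilator_and_perp} again. Now I would invoke Lemma~\ref{lem_ideals_contained_in_annihilators} with $I=I(R)$: since $I(R)_L\subset\Ann(E)_L$, we conclude $I(R)\subset\Ann(E)$, as desired.

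The only genuinely non-formal point is the identification $\linspan R=\PPof{(I(R)_L)^\perp}$ and the translation ``$E\subset\linspan R$ iff $I(R)_L\subset E^\perp$''. This uses that the embedding $X\hookrightarrow\PPof{\widetilde S_L}$ is by the \emph{complete} linear system, so that the hyperplanes of $\PPof{\widetilde S_L}$ correspond bijectively to $\PPof{H^0(X,L)}=\PPof{S[X]_L}$ and the homogeneous degree-$1$ part of the ideal of $R$ in $\PPof{\widetilde S_L}$ is precisely $I(R)_L$ (here one uses that $H^0(X,L)\to H^0(R,L|_R)$ has kernel $I(R)_L$ by definition of $I(R)$); then $\linspan R$, being cut out by those degree-$1$ forms, is the common zero locus in $\PPof{\widetilde S_L}$ of $I(R)_L$, i.e.\ $\PPof{(I(R)_L)^\perp}$. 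I expect this bookkeeping — making precise which space is embedded where, and that ``$E$ contained in a linear subspace of $\PPof{\widetilde S_L}$'' unwinds to ``$E\subset (I(R)_L)^\perp$ as subspaces of $\widetilde S_L$'' — to be the main (though still routine) obstacle; once it is in place the algebraic core is just the two cited lemmas. I would also note that effectiveness of $L$ (indeed very ampleness) guarantees $\PPof{\widetilde S_L}$ and the relevant projective spaces are non-empty, so no degenerate edge cases arise.
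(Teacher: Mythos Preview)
Your proof is correct and follows exactly the paper's own argument: reduce to degree $L$, identify $\linspan{R}=\PPof{(I(R)_L)^\perp}$, dualise to $I(R)_L\subset E^\perp$, use Lemma~\ref{lem_annihilator_and_perp} to rewrite $E^\perp=\Ann(E)_L$, and then apply Lemma~\ref{lem_ideals_contained_in_annihilators} for the nontrivial implication. The paper compresses this into a single chain of equivalences, while you spell out the two directions and the geometric bookkeeping, but the content is identical.
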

When the base field is $\kk=\CC$ this is proven in \cite[Thm~1.4]{galazka_mgr_publ}.
For other fields,  but  $X=\PP V$ the projective space this is standard, see for instance
\cite[Lem.~1.15]{iarrobino_kanev_book_Gorenstein_algebras},
and another version of apolarity is \cite[Prop.~4.14]{nisiabu_jabu_farnik_cactus_Fujita}.
In any case, the proof is always the same:
\begin{prf}
  $E\subset \linspan{R}$ if and only if $E\subset (I(R)_L)^{\perp}$ if and only if $I(R)_L \subset E^{\perp}$ if and only if
  (by Lemma~\ref{lem_annihilator_and_perp}) $I(R)_L \subset \Ann(E)_L$ if and only if
  (by Lemma~\ref{lem_ideals_contained_in_annihilators}) $I(R) \subset \Ann(E)$.
\end{prf}

\subsection{Case of general component}
In this subsection $\ccH$ is any irreducible component of $\usualHilb(X)$.
We still use Notation~\ref{not_parameter_spaces}.

\begin{thm}
   \label{thm_border_apolarity_general_scheme}
   Suppose $X$ is a toric projective variety, $L$ is a very ample line bundle, and $\ccH\subset \usualHilb(X)$ is an irreducible component.
   Denote by $\mathbf{H}_{\ccH}\subset \Hilb(S[X])$
   the corresponding  component of the multigraded scheme,
   as in Corollary~\ref{cor_saturation_map_is_birational}.
   Consider $X$ embedded into $\PPof{\widetilde{S}_L}$
   and fix a point $[E] \in \Gr\left(i, \widetilde{S}_L\right)$.
   Then:
   \[
     [E]\in \cactus{\ccH,i}{X} \iff
     \exists_{[I] \in \mathbf{H}_{\ccH}} \text{ s.t. } I \subset \Ann(E).
   \]
\end{thm}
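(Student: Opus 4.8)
The plan is to peel the stated equivalence down to a statement about the single graded piece indexed by $L$, and then match it against the description of $\cactus{\ccH,i}{X}$ already obtained in Proposition~\ref{prop_descriptions_of_relative_span}. The only genuinely new ingredients needed are the two elementary apolarity lemmas, Lemma~\ref{lem_annihilator_and_perp} and Lemma~\ref{lem_ideals_contained_in_annihilators}, which translate between ``$I$ sits inside $\Ann(E)$'' and a linear-algebra condition on $I_L$.

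First I would analyse the right-hand side. Given $[I]\in\mathbf{H}_{\ccH}$, the containment $I\subset\Ann(E)$ implies $I_L\subset\Ann(E)_L$, and Lemma~\ref{lem_annihilator_and_perp} identifies $\Ann(E)_L$ with $E^{\perp}\subset S[X]_L$; dualising inside $\widetilde{S}_L = S[X]_L^{*}$ this is the same as $E\subset (I_L)^{\perp}$. Conversely, if $I_L\subset\Ann(E)_L$ then Lemma~\ref{lem_ideals_contained_in_annihilators} promotes this to $I\subset\Ann(E)$, and trivially $I\subset\Ann(E)$ gives back $I_L\subset\Ann(E)_L$. Hence the right-hand side of the theorem is equivalent to: there exists $[I]\in\mathbf{H}_{\ccH}$ with $E\subset (I_L)^{\perp}$.

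Second I would recognise this condition geometrically. By construction the bundle $\ccS_{\mathbf{H}}$ on $\mathbf{H}_{\ccH}$ is the pullback of the universal subbundle along $\perpmap_L$, so its fibre over a point $[I]$ is exactly $(I_L)^{\perp}\subset\widetilde{S}_L$; therefore $\relGr(i,\ccS_{\mathbf{H}})$ parametrises the pairs $([I],[E])$ with $E\subset (I_L)^{\perp}$, and $\xi_{\mathbf{H}}$ forgets $[I]$. Thus ``$\exists\,[I]\in\mathbf{H}_{\ccH}$ with $E\subset (I_L)^{\perp}$'' is literally ``$[E]\in\xi_{\mathbf{H}}(\relGr(i,\ccS_{\mathbf{H}}))$'', and Proposition~\ref{prop_descriptions_of_relative_span} gives $\xi_{\mathbf{H}}(\relGr(i,\ccS_{\mathbf{H}})) = \cactus{\ccH,i}{X}$ \emph{with no closure taken}; this last point is what makes the resulting statement an honest equivalence rather than a one-sided implication. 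Stringing the two equivalences together proves the theorem.

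The substantive work is therefore already contained in Proposition~\ref{prop_descriptions_of_relative_span} (and through it in the birationality of $\sch|_{\mathbf{H}_{\ccH}}$ and in Lemma~\ref{lem_flat_family_determined_by_generic_fibre}); nothing hard remains. The one edge case worth a sentence is $i>h_{\ccH}(L)$: then $\cactus{\ccH,i}{X}=\emptyset$ by Lemma~\ref{lem_interesting_range_for_Grassmann_cactus}, while on the algebraic side every $[I]\in\mathbf{H}_{\ccH}$ has $\dim (I_L)^{\perp}=h_{\ccH}(L)<i$, so no $E$ of dimension $i$ can satisfy $I\subset\Ann(E)$; both sides are empty and the equivalence holds vacuously. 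For $1\le i\le h_{\ccH}(L)$ the argument above applies verbatim, and the only thing to be careful about is keeping the two perpendicularity operations --- one landing in $\widetilde{S}_L$, one in $S[X]_L$ --- straight, which is pure bookkeeping.
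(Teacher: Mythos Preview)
Your proof is correct and follows essentially the same approach as the paper: reduce the ideal containment $I\subset\Ann(E)$ to the linear-algebra condition $E\subset (I_L)^{\perp}$ via Lemmas~\ref{lem_annihilator_and_perp} and~\ref{lem_ideals_contained_in_annihilators}, identify this with membership of $([I],[E])$ in $\relGr(i,\ccS_{\mathbf{H}})$, and invoke Proposition~\ref{prop_descriptions_of_relative_span} for the closure-free description of $\cactus{\ccH,i}{X}$. Your edge-case paragraph on $i>h_{\ccH}(L)$ is correct but unnecessary, since in that range $\relGr(i,\ccS_{\mathbf{H}})$ is already empty and the main argument goes through uniformly.
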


\begin{prf}
   Recall from Subsection~\ref{sec_grassmannian_bundles} the construction of bundles $\ccS_{\mathbf{H}}$  and $\Gr(i, \ccS_{\mathbf{H}})$.
   In particular, the $\kk$-points of $\Gr(i, \ccS_{\mathbf{H}})$ are precisely
   $\set{([I], [E']) \in \mathbf{H}_{\ccH} \times \Gr(i,\widetilde{S}_L)  \mid  E' \subset I_L^{\perp}}$.
   Equivalently,
     by Lemmas~\ref{lem_annihilator_and_perp}
     and~\ref{lem_ideals_contained_in_annihilators},
   \[
     \Gr(i, \ccS_{\mathbf{H}})(\kk)=\set{([I], [E']) \in \mathbf{H}_{\ccH} \times \Gr(i,\widetilde{S}_L)  \mid  I \subset \Ann(E')}.
   \]
   By Proposition~\ref{prop_descriptions_of_relative_span}
   $[E]\in \cactus{\ccH,i}{X}$  if and only if
      there exists $[I]\in \mathbf{H}_{\ccH}(\kk)$ such that $([I],[E]) \in \Gr(i, \ccS_{\mathbf{H}})$.
   The condition on $([I],[E])$ is then equivalent to $I \subset \Ann(E)$, proving the Theorem.
\end{prf}

In the setting of Theorem~\ref{thm_border_apolarity_general_scheme}
we define the \emph{variety of border cactus decompositions}
% \footnote{%
% Historically, a cactus decomposition in this sense are called apolar subscheme and the scheme of cactus decompositions is called \emph{variety of apolar subschemes}, abbreviated vaps \cite{}.
% However, we refrain from following this notation as the term ``vaps'' has historical and political conotations in East-Central Europe.}
$\bCD(E, \ccH)$ to be
the reduced fibre over $[E]$ of the map
$\xi_{\mathbf{H}} \colon \relGr(i,\ccS_{\mathbf{H}}) \to \Gr(i, \widetilde{S}_L)$
as in Subsection~\ref{sec_grassmannian_bundles},
$\bCD(E, \ccH):=\reduced{\xi_{\mathbf{H}}^{-1} (E)} \subset \mathbf{H}_{\ccH}$.
Note that $\bCD(E, \ccH)$ is a projective reduced scheme, and it is non-empty if and only if $[E]\in \cactus{\ccH,i}{X}$.

Suppose $G\subset \Aut(X)$ is a connected subgroup of the automorphisms of $X$.
Then it has natural induced actions on $\ccH$, $\mathbf{H}_{\ccH}$, and other varieties and bundles appearing in Diagram~\eqref{equ_incidence_diagram}, such that all the maps in the diagram are equivariant.
If in addition $G$ preserves $[E]\in \Gr(i, \widetilde{S}_L)$ then it also acts on $\bCD(E, \ccH)$.

\begin{thm}
   \label{thm_FIT_for_general_scheme}
   With $\ccH$, $E$, and $G$ as above with $G\cdot E = E$,
   assume in addition that $G$ is a solvable group.
   Then $E\in \cactus{\ccH,i}{X}$  if and only if there exists an ideal $[I]\in \mathbf{H}_{\ccH}$
   preserved by $G$ and such that $I \subset \Ann(E)$.
\end{thm}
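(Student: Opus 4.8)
The plan is to treat the two implications separately: the forward one is immediate, and the reverse one is an application of the Borel fixed point theorem. For the ``if'' direction I would simply invoke Theorem~\ref{thm_border_apolarity_general_scheme}: the existence of \emph{any} $[I]\in\mathbf{H}_{\ccH}$ with $I\subset\Ann(E)$ already yields $E\in\cactus{\ccH,i}{X}$, and $G$-invariance of $I$ plays no role in that direction.

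For the ``only if'' direction, assume $E\in\cactus{\ccH,i}{X}$ and consider the variety of border cactus decompositions $\bCD(E,\ccH)=\reduced{\xi_{\mathbf{H}}^{-1}(E)}\subset\mathbf{H}_{\ccH}$ introduced just before the statement. By the observations there it is a projective reduced scheme, and it is non-empty precisely because $[E]\in\cactus{\ccH,i}{X}$ (Theorem~\ref{thm_border_apolarity_general_scheme}). Since $G$ is connected it preserves each irreducible component of every scheme occurring in Diagram~\eqref{equ_incidence_diagram}, all the maps there are $G$-equivariant, and by hypothesis $G\cdot E=E$; hence $G$ acts on $\bCD(E,\ccH)$.

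Now I would apply the Borel fixed point theorem: a connected solvable linear algebraic group acting on a non-empty projective scheme over an algebraically closed field has a fixed $\kk$-point (if one prefers to quote it only for irreducible varieties, first restrict the action to a single irreducible component of $\bCD(E,\ccH)$, which is $G$-stable since $G$ is connected). This is legitimate because $\Aut(X)$ is a linear algebraic group, $X$ being a complete toric variety, so $G$ is linear. The resulting $G$-fixed point $[I]\in\bCD(E,\ccH)\subset\mathbf{H}_{\ccH}$ corresponds exactly to a $G$-invariant homogeneous ideal $I\subset S[X]$ with $[I]\in\mathbf{H}_{\ccH}$, and the condition $[I]\in\xi_{\mathbf{H}}^{-1}(E)$, that is $E\subset I_L^{\perp}$, unwinds via Lemmas~\ref{lem_annihilator_and_perp} and~\ref{lem_ideals_contained_in_annihilators} to $I\subset\Ann(E)$. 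This is exactly the ideal asserted to exist.

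I do not expect a serious obstacle; the only points requiring care are purely formal: that the $G$-action on $\mathbf{H}_{\ccH}$, on the subbundle $\ccS_{\mathbf{H}}$, and on the Grassmann bundle $\relGr(i,\ccS_{\mathbf{H}})$ is algebraic and that $\xi_{\mathbf{H}}$ is $G$-equivariant, so that $\bCD(E,\ccH)$ really is a $G$-stable closed subscheme. All of this is already built into the equivariance of Diagram~\eqref{equ_incidence_diagram} recorded in the discussion preceding the statement, so the substance of the proof reduces to the single invocation of Borel's theorem.
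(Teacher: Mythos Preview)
Your proposal is correct and follows essentially the same route as the paper: both arguments identify $\bCD(E,\ccH)$ with the set of $[I]\in\mathbf{H}_{\ccH}$ satisfying $I\subset\Ann(E)$, observe that it is a nonempty projective $G$-scheme when $[E]\in\cactus{\ccH,i}{X}$, and then invoke Borel's fixed point theorem. The paper's write-up is terser (it treats both implications at once via ``nonempty $\iff$ has a $G$-fixed point''), but your additional remarks on linearity of $G$ and restriction to a component are harmless elaborations of the same idea.
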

\begin{prf}
   By the above discussion $[E]\in \cactus{\ccH,i}{X}$ if and only if $\bCD(E, \ccH)$ is non-empty.
   By Theorem~\ref{thm_border_apolarity_general_scheme}
    we have
    \[
       \bCD(E, \ccH)(\kk) = \set{[I]\in \mathbf{H}_{\ccH}
       \mid I \subset \Ann(E)}.
    \]
    Since $\bCD(E, \ccH)$ is a projective scheme, by Borel's fixed point lemma \cite[Thm~10.4]{borel} it is non-empty if and only if it contains a $G$-fixed
    $\kk$-point.
    A $G$-fixed point in $\bCD(E, \ccH)$
    is a $G$-invariant ideal $I\subset S[X]$ contained in $\Ann(E)$, proving the claim.
\end{prf}

For the rest of the paper we restrict our attention to components $\ccH\subset \usualHilb_r(X)$, that is, we restrict
to the Hilbert scheme of points.

\subsection{Case of secant varieties}

In this subsection, we let $\ccH = \usualHilbsm{X}$, the smoothable component of the Hilbert scheme.
In this case the standard terminology and notation is the following:
\begin{itemize}
 \item $\cactus{\ccH,i}{X} = \sigma_{r,i}(X)$ is the Grassmann secant variety, or simply secant variety
 $\sigma_r(X)$ for $i=1$,
 \item $\mathbf{H}_{\ccH} =\Slip_r(X)$ is the scheme of limits of ideals of points,
 \item the Hilbert function $h_{\ccH}(D)$ is equal to $h_{r,X}:= \min(r, \dim_{\kk} S[X]_D)$ by Theorem~\ref{thm_Sip_open}.
\end{itemize}

A special feature of this case is the relation between Grassmann secant variety of $X$ and secant variety of $\PP^{i-1}\times X$, see for instance \cite[Thm~2.5, Cor.~3.6(iii)]{landsberg_jabu_ranks_of_tensors}.

Thus for the case $\ccH = \usualHilbsm{X}$ we obtain:
\begin{thm}
   \label{thm_border_apolarity_smoothable_and_slip}
   Suppose $X$ is a smooth toric projective variety, $L$ is a very ample line bundle.
   Consider $X$ embedded into $\PPof{\widetilde{S}_L}$
   and fix a linear subspace $E\subset \widetilde{S}_L$ of dimension~$i$.
   Then:
   \[
     [E]\in \sigma_{r,i}(X) \iff
     \exists_{[I] \in \Slip_r(X)} \text{ such that } I \subset \Ann(E).
   \]
   Moreover, if $G\subset \Aut(X)$ is a connected solvable subgroup that preserves $E$,
    then
    \[
      [E]\in \sigma_{r,i}(X) \iff
       \exists_{[I] \in \Slip_r(X)} \text{ such that } I \subset \Ann(E)\text{ and } G\cdot I =I.
    \]
\end{thm}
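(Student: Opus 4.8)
The plan is to obtain this theorem as the special case $\ccH = \usualHilbsm{X}$ of the results already proved for an arbitrary irreducible component of the Hilbert scheme, namely Theorem~\ref{thm_border_apolarity_general_scheme} and Theorem~\ref{thm_FIT_for_general_scheme}. Under this specialisation the general notation becomes the standard secant notation recalled at the start of this subsection: $\cactus{\ccH,i}{X} = \sigma_{r,i}(X)$, $\mathbf{H}_{\ccH} = \Slip_r(X)$, and $h_{\ccH} = h_{r,X}$ by Theorem~\ref{thm_Sip_open}. So essentially nothing new has to be proved, only checked.

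First I would verify the one hypothesis that needs a word, namely that $\usualHilbsm{X}$ is genuinely an irreducible component of $\usualHilb(X)$. The locus $V$ of reduced length-$r$ subschemes (that is, $r$ distinct points) is open in $\usualHilb_r(X)$, and it is irreducible, being the image of the open subset of $X^{\times r}$ parametrising $r$-tuples of distinct points; hence its closure $\usualHilbsm{X}$ is an irreducible component of $\usualHilb_r(X)$, which in turn is a union of connected components of $\usualHilb(X)$. Applying Theorem~\ref{thm_border_apolarity_general_scheme} to $\ccH = \usualHilbsm{X}$ then yields exactly the first equivalence, $[E]\in \sigma_{r,i}(X) \iff \exists_{[I]\in \Slip_r(X)}\ I\subset \Ann(E)$.

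For the equivariant refinement I would apply Theorem~\ref{thm_FIT_for_general_scheme} with the same $\ccH$ and the given connected solvable group $G\subset \Aut(X)$ satisfying $G\cdot E = E$. The only extra point is that the induced $G$-action on $\usualHilb(X)$ preserves $\usualHilbsm{X}$ --- a connected group cannot permute the irreducible components of $\usualHilb_r(X)$, and in any case an automorphism of $X$ carries reduced subschemes to reduced subschemes --- so $G$ acts compatibly on $\mathbf{H}_{\ccH} = \Slip_r(X)$ and on all the objects of Diagram~\eqref{equ_incidence_diagram}, and Theorem~\ref{thm_FIT_for_general_scheme} produces a $G$-invariant $[I]\in \Slip_r(X)$ with $I\subset \Ann(E)$ whenever $[E]\in\sigma_{r,i}(X)$; the converse implication is again Theorem~\ref{thm_border_apolarity_general_scheme}. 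Since the whole argument is pure specialisation there is no real obstacle: the two verifications (openness and irreducibility of the smoothable locus, and $G$-invariance of the smoothable component) are routine, and one could even bypass the general machinery by re-running the Borel fixed point argument directly on $\bCD(E,\usualHilbsm{X})$, but citing Theorems~\ref{thm_border_apolarity_general_scheme} and~\ref{thm_FIT_for_general_scheme} is cleaner.
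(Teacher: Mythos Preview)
Your proposal is correct and follows exactly the paper's approach: the paper simply states that the theorem follows directly from Theorems~\ref{thm_border_apolarity_general_scheme} and~\ref{thm_FIT_for_general_scheme}. Your additional verifications (that $\usualHilbsm{X}$ is an irreducible component and that $G$ preserves it) are reasonable details the paper leaves implicit.
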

The theorem follows directly from Theorems~\ref{thm_border_apolarity_general_scheme} and \ref{thm_FIT_for_general_scheme}.
Moreover, restricting to $i=1$,
Theorems~\ref{thm_Sip_open}
and \ref{thm_border_apolarity_smoothable_and_slip}
imply Theorem~\ref{thm_border_apolarity_intro}.

As an interesting conclusion, if the border rank of a monomial can be bounded from below using weak border apolarity (Theorem~\ref{thm_border_apolarity_intro}) for some base field, then the same bound applies over any base field.
We illustrate this on a simple example without providing details.
\begin{example}
   Let $X=\PP^2$ and $m = x_0^{(a_0)}x_1^{(a_1)}x_2^{(a_2)}$ with $a_0\leqslant a_1 \leqslant a_2$.
   Then $m \in \sigma_r(X) \setminus \sigma_{r-1}(X)$
   for $r=(a_0+1)(a_1+1)$,
   independent of the base field $\kk$.
   Indeed, the upper bound on such $r$ is proven in the same way as \cite[Lem.~6.2]{nisiabu_jabu_border_apolarity}.
   The lower bound is proven in the same way as \cite[Thm~6.21]{nisiabu_jabu_border_apolarity},
     which boils down to prove non-existence of a monomial ideal $I$ with a fixed Hilbert function contained in another fixed monomial ideal.
   This is purely combinatorial,
      and does not depend on the base field $\kk$.
\end{example}

Up to now, all the monomials that have known border rank over $\CC$, can be calculated using weak border apolarity.

\subsection{Case of cactus varieties}

Now we compare the standard terminology for cactus varieties with the relative linear spans.
Explicitly, let $\usualHilb_{r}(X)$ be the Hilbert scheme of $r$-points, which might have multiple components.
The $r$-th \emph{Grassmann cactus variety} of $X$ is defined as:
\[
  \cactus{r, i}{X}:= \bigcup_{\ccH \subset \usualHilb_{r}(X)} \cactus{\ccH, i}{X}  \subset Gr(i, \widetilde{S}_L)
\]
where the union is over all irreducible components $\ccH$ of $\usualHilb_{r}(X)$.
As before, the special case $i=1$ is simply called the \emph{cactus variety} of $X$ and denoted:
\begin{equation}
  \label{equ_defin_cactus}
  \cactus{r}{X}:= \bigcup_{\ccH \subset \usualHilb_{r}(X)} \cactus{\ccH}{X}  \subset \PPof{\widetilde{S}_L}.
\end{equation}

\begin{prf}[ of Theorem~\ref{thm_abcd}]
   With the assumptions of the theorem we work in the case
   $i=1$.
   By \cite[Cor.~6.20]{jabu_jelisiejew_finite_schemes_and_secants}
    in Equation~\ref{equ_equality_for_generic_fibre_algebraically} it is enough to take the union only over those components $\ccH \subset \usualHilb_{r}(X)$ such that its general element represents a Gorenstein scheme.
    The rest of the claim follows from Theorem~\ref{thm_border_apolarity_general_scheme}.
\end{prf}

Now Theorem~\ref{thm_weak_abcd} follows immediately, as there are only finitely many components of the Hilbert scheme, each of them has one Hilbert function of a general member by Theorem~\ref{thm_structure_of_Hilbert_functions_of_fibres}\ref{item_generic_Hilbert_function_is_open}.
Note that different components may have the same Hilbert function.
For the potential applications, it is an important problem to list all these Hilbert functions, or at least discover the properties of these Hilbert functions.
Naively, one might hope that the only relevant function is $h_{r,X}:= \min(r, \dim_{\kk} S[X]_D)$,
however, see Proposition~\ref{prop_example_of_nongeneral_Hilbert_function}.
At this point we do not know enough about components of Hilbert schemes of points to be able to determine the Hilbert function of a general scheme in each component (except in the situation when the resulting Hilbert function is equal or very close to $h_{r,X}$).

More generally, properties of Hilbert functions of saturated ideals form an interesting topic,
see \cite{ablett_Gotzmann_persistence_for_smooth_projective_toric_varieties} and references therein.
All these properties apply to $h_{\ccH}$.
The only new information we are able to provide is the following observation.

\begin{prop}\label{prop_genhilbfunc}
   Suppose $\ccH\subset \usualHilb_r(\PP^n)$ is an irreducible component of the
      Hilbert scheme of finite subschemes of length $r$.
   Then $h_{\ccH}(1) = \min\set{r, n+1} = h_r(1)$.
\end{prop}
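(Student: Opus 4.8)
The plan is as follows. For a general member $R$ of $\ccH$ one has $h_{\ccH}(1)=\dim_{\kk}(S/I(R))_1=\dim\linspan{R}+1$, where $\linspan{R}\subset\PP^n$ is the linear span. The bound $h_{\ccH}(1)\le\min\set{r,n+1}$ is immediate, since $\dim_{\kk}(S/I(R))_1\le\dim_{\kk}S_1=n+1$ and, for any subscheme of length $r$, $\dim_{\kk}(S/I(R))_1\le\deg R=r$. So the whole point is the reverse inequality $\dim\linspan{R}\ge\min\set{r-1,n}$ for a general $R\in\ccH$. For the smoothable component this already holds by Theorem~\ref{thm_Sip_open}, so I expect no separate treatment is needed: the argument below applies to every component and rules out $\dim\linspan{R}<\min\set{r-1,n}$ outright.

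Suppose, for contradiction, that the generic value $d:=\dim\linspan{R}$ satisfies $d<\min\set{r-1,n}$; equivalently $d<n$ and $r\ge d+2$. Since $\Aut(\PP^n)=\mathrm{PGL}_{n+1}$ is connected it preserves the component $\ccH$, so the span morphism $\spanmap_{\ccO(1)}\colon\ccH^{\circ}\to\Gr(d+1,n+1)$ of Proposition~\ref{prop_linear_span_map} (with $\ccH^{\circ}$ the open locus of constant Hilbert function of Theorem~\ref{thm_structure_of_Hilbert_functions_of_fibres}\ref{item_generic_Hilbert_function_is_open}) is equivariant, hence dominant onto this homogeneous space of $\PP^d$'s in $\PP^n$. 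Therefore $\ccH$ is an irreducible component of the closure of the locus $\mathcal F_d\subset\usualHilb_r(\PP^n)$ of subschemes whose span is \emph{exactly} a $\PP^d$; and $\mathcal F_d\to\Gr(d+1,n+1)$, $[R']\mapsto[\linspan{R'}]$, is a Zariski--locally trivial bundle with fibre the open nondegenerate locus $\usualHilb_r^{=d}(\PP^d)\subset\usualHilb_r(\PP^d)$ and structure group the connected group $\mathrm{PGL}_{d+1}$. Hence the components of $\overline{\mathcal F_d}$ are in bijection with the components $\mathcal K$ of $\usualHilb_r(\PP^d)$ whose general member is nondegenerate in $\PP^d$, via $\dim\ccH=(d+1)(n-d)+\dim\mathcal K$; and a general $R\in\ccH$ is a general member $R_0$ of such a $\mathcal K$, placed in a general $\PP^d\subset\PP^n$. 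In particular $R_0$ spans $\PP^d$ and $\deg R_0=r\ge d+2>\dim\linspan{R_0}+1$, so $R_0$ carries more length than its span requires.

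The crux, and the step I expect to be the main obstacle, is to show that this $R_0$, regarded inside $\PP^d\subset\PP^{d+1}\subset\PP^n$, is a \emph{flat limit of length-$r$ subschemes of $\PP^{d+1}$ that span $\PP^{d+1}$}. Granting this, $\ccH$ lies in the closure $\overline{\mathcal F_{d+1}}$, which is nonempty because $d+1\le n$ and $r\ge d+2$; but $\ccH$ is contained in the closed set $\set{R':\dim\linspan{R'}\le d}$, which is disjoint from the dense subset $\mathcal F_{d+1}$ of $\overline{\mathcal F_{d+1}}$, so $\ccH$ is a \emph{proper} closed subvariety of an irreducible component of $\overline{\mathcal F_{d+1}}\subset\usualHilb_r(\PP^n)$ --- contradicting the maximality of the component $\ccH$. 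This would force $d=\min\set{r-1,n}$, i.e.\ $h_{\ccH}(1)=\min\set{r,n+1}$. To produce the required degeneration one uses the excess length: pick $p\in\operatorname{supp}R_0$ and run a one--parameter flat deformation in $\PP^{d+1}$ that drags a length-one piece of $R_0$ at $p$ off of $\PP^d$ in a direction transverse to $\PP^d$, keeping the residual length-$(r-1)$ subscheme --- which still spans $\PP^d$, since $R_0$ is nondegenerate of length $\ge d+2$ --- inside $\PP^d$; for nonzero parameter the total spans $\PP^{d+1}$, and the parameter $\to 0$ recovers $R_0$. When $\mathcal K$ is the smoothable component this is transparent (perturb one of $r$ general points of $\PP^d$ off of $\PP^d$, cf.\ Theorem~\ref{thm_Sip_open}); the genuinely delicate case, where real care is needed, is when the general member of $\mathcal K$ is everywhere non-reduced, so that there is no reduced support point to move and the transverse length-one piece must instead be produced from a socle element accounting for the excess $r-(d+1)$.
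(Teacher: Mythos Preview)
Your setup via the fibration over the Grassmannian is correct and the logical skeleton is sound: once you can exhibit, for a general $[R_0]\in\ccH$, a curve in $\usualHilb_r(\PP^n)$ through $[R_0]$ whose general member spans a $\PP^{d+1}$, the semicontinuity of $h_{\bullet}(1)$ and the fact that $\ccH$ is the unique component through a general $[R_0]$ give the contradiction. The problem is that you do not actually produce this curve. Your ``drag a length-one piece off $\PP^d$'' is not a construction: for an everywhere non-reduced $R_0$ there is no length-one connected component to move, and your parenthetical that the residual length-$(r-1)$ subscheme ``still spans $\PP^d$'' is not justified and can fail for a bad choice of the piece removed (e.g.\ three collinear points plus one off the line in $\PP^2$: removing the off-line point leaves a degenerate residual). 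You correctly flag the non-reduced case as delicate and gesture at a socle element, but nothing is proved there; this is exactly the heart of the statement, and it is left open.

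The paper's approach avoids this deformation problem entirely. Instead of deforming the \emph{subscheme}, it changes the \emph{embedding} of the same abstract $R$: first, any finite $R$ of length $r$ admits a linearly nondegenerate embedding into $\PP^n$ (embed $R=\Spec A$ into $\PP(A)\cong\PP^{r-1}$ and, if $r>n+1$, project from a centre disjoint from the second cactus variety $\cactus{2}{R}$); second, any two embeddings $i_1,i_2\colon R\hookrightarrow\PP^n$ lie in the same irreducible component of $\usualHilb_r(\PP^n)$, because the linear pencil $s\,i_1^*\alpha_j+t\,i_2^*\alpha_j$ of sections of $\ccO_R$ is generically an embedding. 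Taking $[i_1(R)]$ general in $\ccH$ forces the nondegenerate $[i_2(R)]$ into $\ccH$ as well, giving $h_{\ccH}(1)\ge\min\{r,n+1\}$ directly. This interpolation-of-embeddings idea is in fact exactly what would fill your gap (it manufactures the missing one-parameter family through $[R_0]$), but once you have it the whole bundle-and-contradiction wrapper becomes unnecessary.
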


\begin{lemma}\label{lem_nondeg_embedding}
   Suppose $i_1\colon R \hookrightarrow \PP V$ is an embedding of a finite scheme of length $r$.
   Then there is an embedding $i_2\colon R \hookrightarrow \PP V$
     such that $\dim (V^*/I(i_2(R))_1) = \min\set{r, \dim V}$.
\end{lemma}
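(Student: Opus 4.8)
The plan is to translate ``$R$ embeds into $\PP V$'' into a statement about linear subspaces of the finite-dimensional $\kk$-algebra $A := H^0(R,\ccO_R)$, and then to enlarge the subspace coming from $i_1$. Write $r = \dim_\kk A$, set $d := \min\{r, \dim V\}$, and decompose $R = \coprod_p \Spec A_p$ over its finitely many (closed) points, with $\mathfrak m_p \subset A$ the maximal ideal at $p$. Since $R$ is finite, the line bundle $i_1^{*}\ccO_{\PP V}(1)$ is trivial; after fixing a trivialisation, restriction of linear forms is a $\kk$-linear map $\rho\colon V^{*} \to A$ with $\ker \rho = I(i_1(R))_1$, so its image $B_1 := \rho(V^{*})$ satisfies $\dim_\kk B_1 = \dim_\kk\bigl(V^{*}/I(i_1(R))_1\bigr)$ and, being at once a quotient of $V^{*}$ and a subspace of $A$, also $\dim_\kk B_1 \leqslant d$. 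The first step is to record the dictionary: for a linear subspace $B \subseteq A$ with $\dim_\kk B \leqslant \dim V$, there is a closed immersion $i\colon R \hookrightarrow \PP V$ with $\dim_\kk\bigl(V^{*}/I(i(R))_1\bigr) = \dim_\kk B$ --- obtained from any surjection $q\colon V^{*} \twoheadrightarrow B$, the ideal then satisfying $I(i(R))_1 = \ker q$ --- if and only if $B$ has the three properties that $B \not\subseteq \mathfrak m_p$ for every $p$ (base-point freeness of the linear system), the image of $B$ in each $A_p$ generates $A_p$ as a $\kk$-algebra (separation of tangent vectors), and $B \cap \mathfrak m_p \neq B \cap \mathfrak m_{p'}$ for $p \neq p'$ (separation of points).

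Granting the dictionary, the argument is short. Because $i_1$ is a closed immersion, $B_1$ has the three properties above. Since $\dim_\kk B_1 \leqslant d \leqslant r = \dim_\kk A$, I pick a linear subspace $B$ with $B_1 \subseteq B \subseteq A$ and $\dim_\kk B = d$. Enlarging $B_1$ to $B$ preserves the three properties: the first two are visibly monotone in $B$, while if $B \cap \mathfrak m_p = B \cap \mathfrak m_{p'}$ then intersecting with $B_1$ gives $B_1 \cap \mathfrak m_p = B_1 \cap \mathfrak m_{p'}$, contradicting the property for $B_1$. Now apply the dictionary to $B$ (legitimate since $\dim_\kk B = d \leqslant \dim V$): a choice of surjection $q\colon V^{*} \twoheadrightarrow B$ produces a closed immersion $i_2\colon R \hookrightarrow \PP V$ with $\dim_\kk\bigl(V^{*}/I(i_2(R))_1\bigr) = \dim_\kk B = d = \min\{r,\dim V\}$, as desired. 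When $\dim V \geqslant r$ this just amounts to $B = A$; the existence of $i_1$ is used only to guarantee that some $B$ with the three properties exists at all, which for $\dim V < r$ is a genuine constraint on the abstract scheme $R$.

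The step I expect to require the most care is establishing the dictionary in the possibly non-reduced, disconnected case: that a base-point-free linear system spanning $B$ is a closed immersion exactly when the tangent- and point-separation conditions hold. I would deduce this from the standard criterion that a morphism from a finite scheme over $\kk = \bar\kk$ to projective space is a closed immersion if and only if it is injective on points and the induced maps of local rings $\ccO_{\PP V,\, f(p)} \to A_p$ are surjective, together with the elementary facts that $A_p = \kk\cdot 1 + \mathfrak m_p$ (so any $\kk$-subalgebra of $A_p$ containing a generating set of $\mathfrak m_p$ is all of $A_p$) and that dividing by a unit of $A_p$ does not change the subalgebra generated by the image of $B$. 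The identification of ``$f(p) \neq f(p')$'' with ``$B \cap \mathfrak m_p \neq B \cap \mathfrak m_{p'}$'', and the equality $I(i(R))_1 = \ker q$, are then immediate from the definitions; everything else is linear algebra.
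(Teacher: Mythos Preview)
Your overall strategy---enlarge the linear system $B_1 \subset A$ coming from $i_1$ to one of dimension $d$, then observe this still defines a closed immersion into $\PP V$---is sound, but the dictionary is misstated, and the gap is exactly where you flagged it. Condition~(b) (``the image of $B$ in $A_p$ generates $A_p$ as a $\kk$-algebra'') is strictly weaker than the closed-immersion condition, and the claimed elementary fact that dividing by a unit of $A_p$ does not change the subalgebra generated by $B_p$ is false. For a counterexample take a single point with $A_p = \kk[t]/(t^2)$ and $B = B_p = \kk\cdot(1+t)$: then $\kk[B_p] = A_p$ (since $t = (1+t) - 1$), so (a), (b), (c) all hold, yet for the unit $u = 1+t$ one has $u^{-1} B_p = \kk\cdot 1$, which generates only $\kk \subsetneq A_p$; accordingly the induced morphism collapses the length-$2$ scheme to a reduced point. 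The correct replacement for (b) is that the image of $B$ in $A_p$ surjects onto $A_p$ modulo the square of its maximal ideal (equivalently, $u^{-1}B_p$ generates $A_p$ as a $\kk$-algebra for some unit $u\in B_p$). This repaired condition is still visibly monotone under $B_1 \subset B$, so once (b) is corrected the rest of your argument goes through unchanged.

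For comparison, the paper's route is different: it first embeds $R$ nondegenerately into $\PP^{r-1}$ via $R \hookrightarrow \PP A$, and when $\dim V < r$ projects down to $\PP V$ from a generic linear centre disjoint from the tangent spaces and secant lines of the image; the hypothesis on $i_1$ enters only to bound the Zariski tangent dimensions of $R$, ensuring such a centre exists by a dimension count. Your approach (once fixed) avoids the genericity argument and uses $i_1$ more directly.
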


\begin{proof}
   First, there is always a concisely independent embedding
   $i_3\colon R \hookrightarrow \PP^{r-1}$
   with $I(i_3(R))_1 = 0$
   \cite[pp~702--703]{nisiabu_jabu_kleppe_teitler_direct_sums}.
   This is obtained by the embedding $R = \Spec A$ into $\PP A$,
     or equivalently, by the trivial line bundle $\ccO_R$, which is very ample.
   If $\dim V\ge r$, then $i_2$ is a composition of $i_3$ with a linear embedding $\PP^{r-1} \subset \PP V$.

   Suppose $\dim V < r$.
   Let $\cactus{2}{i_3(R)}\subset \PP^{r-1}$ be the second cactus variety of $R$,
   that is the finite union of the (projective) Zariski tangent spaces of $R$ at each point and the secant lines connecting the any two points of support of $R$.
   Note that $\dim \cactus{2}{i_3(R)} \le \dim \PP V$:
   indeed, since $i_1$ is an embedding of $R$ into $\PP V$,
   each tangent space must be at most $\dim \PP V$ dimensional.
   Also each secant line is one dimensional, and there are any secant lines only if there are at least two distinct points of support,
   which is possible only if $\dim \PP V\ge 1$.

   Pick a linear projection $\PP^{r-1}\dashrightarrow \PP V$.
   By standard arguments, such as in \cite[Prop.~IV.3.4]{hartshorne},
      if the center of the projection does not intersect
      $\cactus{2}{i_3(R)}$,
      then the composition of $i_3$ and the projection is still an embedding.
   Moreover, in such a case the linear span of this new embedding is $\PP V$.
   Clearly, by the dimension count, we can pick the linear projection satisfying the above property.
\end{proof}

\begin{lemma}\label{lem_two_embeddings_in_the_same_component}
   Suppose $i_1$ and $i_2$ are two embeddings of the same finite scheme
   $R\hookrightarrow \PP V$.
   Then $[i_1(R)]$ and $[i_2(R)]$ are in the same irreducible component of the Hilbert scheme $\usualHilb_r(\PP V)$, where $r=\deg R$.
\end{lemma}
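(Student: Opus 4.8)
The plan is to interpolate between the two embeddings by a flat family over $\PP^1$ (or $\AA^1$) whose generic fibre is isomorphic, as an abstract scheme, to $R$, and then use irreducibility of the base to conclude that the two special fibres lie in the same component. First I would record the key structural fact: an embedding $R \hookrightarrow \PP V$ of a finite scheme $R = \Spec A$ (with $\dim_\kk A = r$) is equivalent to the choice of a surjection of vector spaces $V^* \twoheadrightarrow A$, i.e.\ a point of an open subset of $\Hom(V^*, A)$, modulo the action of $\Aut(R) \times \GL(V)$; more precisely the embedding is recovered from a base-point-free linear system, which in this finite setting just means a linear map $V^* \to A$ (equivalently a subspace of sections) that generates $A$ as an $A$-module, i.e.\ is surjective. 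So $i_1$ and $i_2$ correspond to two surjective linear maps $\phi_0, \phi_1 \colon V^* \to A$.

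Next I would build the family. The locus $\Omega \subset \Hom(V^*, A)$ of surjective maps is open; a surjective map is determined up to the requirement that its image is all of $A$, which is an open (in fact ``generic rank'') condition, so $\Omega$ is nonempty open in an affine space, hence irreducible. Pick a line (or an irreducible curve, or even just use that $\Omega$ is connected and irreducible) $\ell \cong \AA^1 \to \Omega$ with $\ell$ passing through both $\phi_0$ and $\phi_1$ — possible since $\Omega$ is an open dense subset of the affine space $\Hom(V^*,A)$, hence any two of its points are joined by the open dense subset of a generic line through them, and after an affine reparametrisation we may assume $0 \mapsto \phi_0$ and $1 \mapsto \phi_1$. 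Over $\ell$ we get a family: the constant family $\ell \times \Spec A$ together with, for each $t$, the embedding of the fibre $\Spec A$ into $\PP V$ given by $\phi_t$; concretely this is a closed subscheme $\ccZ \subset \ell \times \PP V$ all of whose fibres are isomorphic to $R$ and have length $r$, so the family is flat over $\ell$ (finite of constant length $r$ over an integral base is flat, e.g.\ by \cite[Thm~III.9.9]{hartshorne}). This flat family induces a morphism $\ell \to \usualHilb_r(\PP V)$ sending $t$ to $[i_t(R)]$, with $0 \mapsto [i_1(R)]$ and $1 \mapsto [i_2(R)]$.

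Finally, since $\ell$ is irreducible, its image under this morphism is irreducible, hence contained in a single irreducible component of $\usualHilb_r(\PP V)$; therefore $[i_1(R)]$ and $[i_2(R)]$ lie in that same component, proving the lemma. The one point that needs a little care — and what I expect to be the main obstacle — is the claim that surjectivity of $\phi_t$ holds along the chosen line, i.e.\ that $i_t$ really is an embedding for all $t \in \ell$ and not just generically; this is exactly why one must choose $\ell$ inside the open set $\Omega$, and why one first checks $\Omega$ is nonempty open in an affine space so that a connecting line inside $\Omega$ through two of its points actually exists (a generic line through two points of a dense open subset of $\AA^N$ meets the complement in a proper closed subset, so one may need to allow a curve that avoids finitely many bad points, or simply invoke that $\Omega$ is an irreducible variety and any two closed points of an irreducible variety lie in a common irreducible curve). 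Once $i_t$ is an embedding for every $t$, flatness is automatic and the rest is formal.
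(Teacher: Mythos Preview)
Your proof is correct and follows essentially the same route as the paper: both interpolate linearly between the two maps $V^*\to A$ (the paper writes this as $s\cdot i_1^*\alpha_j + t\cdot i_2^*\alpha_j$), observe that generically along the line this is still a surjection hence an embedding, and use irreducibility of the parameter curve to conclude. The only difference is that you worry about arranging the \emph{entire} line inside $\Omega$, whereas the paper simply takes the open dense subset of $\PP^1$ where the map is an embedding; since both endpoints $(1{:}0)$ and $(0{:}1)$ already lie in this open set, that is enough, and your extra care here is unnecessary.
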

\begin{prf}
   Pick a basis $\setfromto{\alpha_0}{\alpha_n}$
      of $V^* = H^0(\ccO_{\PP V} (1))$.
   Pulling the basis back to $R$ using $i_1$ and $i_2$,
      we obtain two collections,
      $\setfromto{i_1^*\alpha_0}{i_1^*\alpha_n}$ and
      $\setfromto{i_2^*\alpha_0}{i_2^*\alpha_n}$,
      of sections of the trivial line bundle
      $\ccO_R \simeq i_1^*\ccO_{\PP V} (1) \simeq i_2^*\ccO_{\PP V} (1)$.
   Consider the $\PP^1$-parametrised family of maps $R \to \PP V$
      determined by
      \[
         \setfromto{s \cdot i_1^*\alpha_0+ t\cdot i_2^*\alpha_0}{s \cdot i_1^*\alpha_n+ t\cdot i_2^*\alpha_n},
      \]
      where $s,t$ are coordinated on $\PP^1$.
   Generically, this is an embedding, and thus we obtain a flat family of subschemes of $\PP V$ paramerised by an open dense subset of $\PP^1$.
   This family demonstrates that $[i_1(R)]$ and $[i_2(R)]$ are in the same component of the Hilbert scheme, as claimed.
\end{prf}

\begin{prf}[ of Proposition~\ref{prop_genhilbfunc}]
   Take a general element $[R]\in \ccH$. By the generality, $\ccH$ is the unique component of $\usualHilb_r(\PP V)$ containing $[R]$.
   By Lemma~\ref{lem_nondeg_embedding}, we can reembed $R$ into $\PP V$
      in a linearly nondegererate way.
   By Lemma~\ref{lem_two_embeddings_in_the_same_component}
      both embeddings are in the same component, that is $\ccH$.
   It follows that $h_{\ccH}(1)\ge \min\set{r, \dim V}$,
      then we must have the equality since $h_{\ccH}(1)\leqslant h_r(1)$.
\end{prf}

Consider a set of $4$ points in $\PP^n$.
If three of them are on a line, then all four of them are linearly dependent, contained in a $\PP^2$.
The following lemma generalises this elementary observation to schemes, and we skip the proof.
\begin{lemma}
   \label{lem_defective_subscheme}
   Suppose $R$ and $R'$ are two finite schemes of degree $r$ and $r'$ respectively.
   Suppose $R'\subset R \subset X$.
   Then for any $D\in \Pic X$ we have the inequality
   $h_R(D) \leqslant h_{R'}(D) + (r - r')$.
   In particular, if $h_{R'}(D) < r'$, then also $h_{R}(D) < r$.
\end{lemma}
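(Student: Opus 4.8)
The plan is to reduce the whole statement to a dimension count on the finite scheme $R$, exploiting that a line bundle restricted to a finite (hence affine) scheme is trivial and that the global sections functor is exact there.

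First I would reinterpret $h_R(D)$ and $h_{R'}(D)$ as ranks of restriction maps. By the very definition of the saturated homogeneous ideal $I(R)$, its degree-$D$ piece $I(R)_D$ is precisely the space of sections of $D$ vanishing on $R$, i.e.\ the kernel of the restriction map $\rho_R\colon H^0(X,D)\to H^0(R,D|_R)$. Hence $h_R(D)=\dim_{\kk}(S[X]/I(R))_D=\dim_{\kk}\rho_R\bigl(H^0(X,D)\bigr)$, and likewise $h_{R'}(D)=\dim_{\kk}\rho_{R'}\bigl(H^0(X,D)\bigr)$ for the restriction $\rho_{R'}$ to $R'$. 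Since $R$ is finite we have $D|_R\cong\ccO_R$, so $\dim_{\kk}H^0(R,D|_R)=\deg R=r$, and similarly $\dim_{\kk}H^0(R',D|_{R'})=r'$.

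Next, the closed immersion $R'\hookrightarrow R$ yields a short exact sequence of coherent sheaves on $R$,
\[
   0\to \ccI_{R'/R}\otimes D|_R\to D|_R\to D|_{R'}\to 0,
\]
where $\ccI_{R'/R}$ is the ideal sheaf of $R'$ inside $R$. As $R$ is affine, taking global sections preserves exactness, so the surjection $\pi\colon H^0(R,D|_R)\twoheadrightarrow H^0(R',D|_{R'})$ has $\dim_{\kk}\ker\pi=r-r'$, and clearly $\rho_{R'}=\pi\circ\rho_R$. Now restrict $\pi$ to the subspace $\rho_R\bigl(H^0(X,D)\bigr)\subseteq H^0(R,D|_R)$: its image is $\rho_{R'}\bigl(H^0(X,D)\bigr)$, of dimension $h_{R'}(D)$, and its kernel sits inside $\ker\pi$, hence has dimension at most $r-r'$. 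Rank--nullity then gives $h_R(D)=\dim_{\kk}\rho_R\bigl(H^0(X,D)\bigr)\leqslant h_{R'}(D)+(r-r')$, the first assertion. The ``in particular'' clause is immediate: if $h_{R'}(D)<r'$ then $h_R(D)\leqslant h_{R'}(D)+(r-r')<r'+(r-r')=r$.

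I do not anticipate a real obstacle. The only two points needing a touch of care are bookkeeping ones: that $I(R)_D$ equals exactly the degree-$D$ sections vanishing on $R$ (so that $h_R(D)$ is the rank of $\rho_R$ and not the full $\dim H^0(R,D|_R)$), and that exactness of global sections on the finite scheme $R$ makes $\dim_{\kk}\ker\pi$ equal to $r-r'$; everything past that is linear algebra.
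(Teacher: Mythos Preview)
Your argument is correct and complete; the reinterpretation of $h_R(D)$ as the rank of the restriction map $\rho_R$, together with the exactness of global sections on the affine scheme $R$ and the rank--nullity applied to $\pi|_{\operatorname{im}\rho_R}$, gives exactly the claimed inequality. The paper itself omits the proof entirely (the lemma is stated as an elementary observation with \texttt{\textbackslash noprf}), so there is nothing to compare against --- your write-up simply fills in what the authors left to the reader.
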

\noprf

\begin{prop}
   \label{prop_example_of_nongeneral_Hilbert_function}
   Suppose $\kk=\CC$ and consider $\ccH \subset \usualHilb_{28}(\PP^6)$, the component, whose general element $[R]\in \ccH$ represents a scheme $R\subset \PP^6$ supported at two points and locally, near each point $R$ is isomorphic to a $(1661)$ Gorenstein scheme, as in \cite[Thm~1.1]{jelisiejew_VSP_and_Gorenstein_locus_for_14_points}.
   Then
   \[
      h_{R}(2) = h_{\ccH}(2) \leqslant 27 < h_{28, \PP^6}(2) = 28.
   \]
\end{prop}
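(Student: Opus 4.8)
The plan is to bound $h_R(2)$ from above by peeling off one of the two length‑$14$ clusters with the defect estimate of Lemma~\ref{lem_defective_subscheme}, reducing everything to a single local apolarity computation; and to identify $h_R(2)$ with $h_{\ccH}(2)$ using that $[R]$ is a general point.

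First I would dispose of the bookkeeping. Since $[R]$ is a general point of $\ccH$, Notation~\ref{notation-generic_Hilbert_function} together with Theorem~\ref{thm_structure_of_Hilbert_functions_of_fibres} gives $h_R = h_{\ccH}$; and $\dim_{\CC}\CC[z_0,\dots,z_6]_2 = \binom{8}{2} = 28$, so $h_{28,\PP^6}(2) = \min(28,28) = 28$. It thus remains to prove $h_R(2)\le 27$. Write $R = R_1\sqcup R_2$ for the two connected components, each supported at one point, of degree $14$, with local ring a $(1661)$ Gorenstein algebra. Applying Lemma~\ref{lem_defective_subscheme} to $R_1\subset R$ (with $r=28$, $r'=14$) gives
\[
  h_R(2)\ \le\ h_{R_1}(2) + (28-14)\ =\ h_{R_1}(2) + 14,
\]
so it is enough to show $h_{R_1}(2)\le 13$.

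For the local computation I would put $\operatorname{Supp}(R_1) = [1:0:\dots:0]$, work in the chart $\{z_0\ne 0\}$ with affine coordinates $\alpha_i = z_i/z_0$, and write the local ring of $R_1$ as $A = \CC[\alpha_1,\dots,\alpha_6]/\Ann(F)$, where $F$ is a dual socle generator (Macaulay duality); the hypothesis on $R_1$ says that $\operatorname{gr}_{\mathfrak{m}}A$ has Hilbert function $(1,6,6,1)$, whence in particular $\mathfrak{m}_A^4 = 0$ and $\dim_{\CC}(\operatorname{gr}_{\mathfrak{m}}A)_2 = 6$. A quadric of $\PP^6$ vanishes on $R_1$ precisely when its dehomogenisation lies in $\Ann(F)$, and since $\Ann(F)$ contains no nonzero polynomial of degree $\le 1$, this identifies $I(R_1)_2$ with the space of homogeneous quadratic forms contained in $\Ann(F)$; hence $h_{R_1}(2) = 28 - \dim_{\CC}\bigl(\Ann(F)\cap\CC[\alpha]_2\bigr)$. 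By the explicit description of the relevant component in \cite[Thm~1.1]{jelisiejew_VSP_and_Gorenstein_locus_for_14_points}, the general such cluster is the spectrum of the apolar algebra of a general homogeneous cubic $F_3$ in six variables, taken with its standard grading‑compatible embedding, so $\Ann(F) = \Ann(F_3)$ is homogeneous and $\dim_{\CC}\bigl(\Ann(F_3)\cap\CC[\alpha]_2\bigr) = \dim_{\CC}\Ann(F_3)_2 = 21 - \dim_{\CC}(\operatorname{gr}_{\mathfrak{m}}A)_2 = 21 - 6 = 15$. Therefore $h_{R_1}(2) = 28 - 15 = 13$, and combining with the display, $h_R(2)\le 13+14 = 27 < 28 = h_{28,\PP^6}(2)$.

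The hard part is exactly the invocation of Jelisiejew's structure theorem in the last step. A priori a $(1,6,6,1)$ cluster imposes at least $13$, but possibly as many as $14$, independent conditions on quadrics: the $15$ quadratic \emph{leading} forms of $\Ann(F)$ (forced by $\dim(\operatorname{gr}_{\mathfrak{m}}A)_2 = 6$) need not all be genuine quadratic forms of $\Ann(F)$ once $F$ is non‑homogeneous. So one genuinely needs to know that the general member of this particular component is essentially graded; supplying that input from \cite{jelisiejew_VSP_and_Gorenstein_locus_for_14_points}, and checking it transfers to the two‑point, degree‑$28$ component $\ccH$, is the main obstacle.
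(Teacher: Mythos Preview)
Your reduction via Lemma~\ref{lem_defective_subscheme} to a single cluster $R_1$ is a genuinely different route from the paper's, and the key step does not go through: for a \emph{general} $[R]\in\ccH$ the component $R_1$ has $h_{R_1}(2)=14$, not $13$. What one knows (Elias--Rossi, or Jelisiejew) is that the abstract local algebra $A$ is graded, $A\cong\CC[\alpha_1,\dots,\alpha_6]/\Ann(F_3)$; but the general point of the $(1,6,6,1)$--component of $\usualHilb_{14}(\PP^6)$ is \emph{not} embedded in a grading--compatible way. Writing a general embedding as $\beta_i\mapsto \alpha_i+a_i^{(2)}+a_i^{(3)}$ with $a_i^{(j)}\in A_j$, a quadric $p_2(\beta)$ vanishes on $R_1$ iff $p_2(\alpha)=0$ in $A_2$ \emph{and} $\sum_i(\partial_i p_2)(\alpha)\,a_i^{(2)}=0$ in $A_3\cong\CC$. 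For generic $a^{(2)}$ the second condition is a nontrivial extra linear constraint on $\Ann(F_3)_2$: e.g.\ for $F_3=\alpha_1\alpha_2\alpha_3+\alpha_4\alpha_5\alpha_6$ take $a_1^{(2)}=\alpha_2\alpha_3$, $a_j^{(2)}=0$ for $j>1$, and $p_2=\alpha_1^2\in\Ann(F_3)_2$; then $(\partial_1 p_2)\,a_1^{(2)}=2\alpha_1\alpha_2\alpha_3\ne 0$ in $A_3$. Hence only $14$ quadrics vanish on this $R_1$, so $h_{R_1}(2)=14$. By Lemma~\ref{lem_two_embeddings_in_the_same_component} the graded and non--graded embeddings lie in the same irreducible component, and by Theorem~\ref{thm_structure_of_Hilbert_functions_of_fibres} the general one realises the larger value. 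Your bound then reads $h_R(2)\le 14+14=28$, which is vacuous. (Relatedly, the sentence ``since $\Ann(F)$ contains no nonzero polynomial of degree $\le 1$, this identifies $I(R_1)_2$ with the homogeneous quadratic forms in $\Ann(F)$'' already presupposes that $\Ann(F)$ is graded.)

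The paper peels off a different degree--$14$ subscheme, supported at \emph{both} points: $R'=R'_1\sqcup R'_2$, where $R'_i=\Spec(A/\mathfrak m_A^2)$ is the first infinitesimal neighbourhood of the support of $R_i$, of length~$7$. Two general double points in $\PP^6$ impose only $13$ conditions on quadrics (an elementary rank computation, or the $d=2$ defective case of Alexander--Hirschowitz), so $h_{R'}(2)=13$ and Lemma~\ref{lem_defective_subscheme} gives $h_R(2)\le 13+14=27$. The needed defect is a two--point phenomenon --- the secant defectivity of the second Veronese --- and is invisible from a single cluster.
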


\begin{prf}
   For $i=1, 2$, let $R_i\subset R$ be the irreducible components of $R$, so that each $R_i$ is a $(1661)$ Gorenstein local scheme.
   Let $R'_i \subset R_i$ be the length $7$ subscheme defined by the square of the maximal ideal and set $R'=R'_1 \sqcup R'_2$.
   Then by an explicit computation (or, if we want to use the cannon to kill a fly, by Alexander-Hirschowitz Theorem \cite[Thm~1.1]{brambilla_ottaviani_on_AH_theorem}),
   we have $h_{R'}(2) = 13$.
   The degree of $R$ is $28$, while the degree of $R'$ is $14$.
   By Lemma~\ref{lem_defective_subscheme} we must have:
   \[
      h_{\ccH}(2) = h_{R}(2)  \leqslant h_{R'}(2) + (28-14) = 27.
   \]
   This concludes the proof.
\end{prf}

By an explicit computer calculation one may show that for $\ccH$ as in Proposition~\ref{prop_example_of_nongeneral_Hilbert_function}, in fact, $h_{\ccH} = (1,7, 27,28,28,\dotsc)$,
hence $h_{\ccH}(d) = h_{28, \PP^6}(d)$ for all $d\neq 2$,
while $h_{\ccH}(2) =27 < h_{28, \PP^6}(2)= 28$.

%

% \bibliography{references}
\bibliography{abcd.bbl}

\end{document}